\newtheorem{theorem}{Theorem}[section]
\newtheorem{lemma}[theorem]{Lemma}
\newtheorem{corollary}[theorem]{Corollary}
\newtheorem{proposition}[theorem]{Proposition}
\newtheorem{definition}[theorem]{Definition}
\newtheorem{remark}[theorem]{Remark}
\title{A Silting Theorem}
\begin{document}

\providecommand{\ann}{\mathop{\rm ann}\nolimits}%
\providecommand{\gld}{\mathop{\rm gl. dim}\nolimits}%
\providecommand{\gord}{\mathop{\rm gor. dim}\nolimits}%
\providecommand{\pd}{\mathop{\rm pd}\nolimits}%
\providecommand{\rk}{\mathop{\rm rk}\nolimits}%
\providecommand{\Fac}{\mathop{\rm Fac}\nolimits}%
\providecommand{\ind}{\mathop{\rm ind}\nolimits}%
\providecommand{\Sub}{\mathop{\rm Sub}\nolimits}%
\providecommand{\coker}{\mathop{\rm coker}\nolimits}%
\providecommand{\cone}{\mathop{\rm cone}\nolimits}%

\def\s{\stackrel}
\def\A{\mathcal{A}}
\def\C{\mathcal{C}}
\def\DA{{D^b(A)}}
\def\H{\mathcal{H}}
\def\T{\mathcal{T}}
\def\K{\mathcal{K}}
\def\P{\mathcal{P}}
\def\X{\mathcal{X}}
\def\Y{\mathcal{Y}}
\def\F{\mathcal{F}}
\def\R{\mathcal{R}}
\def\L{\mathcal{L}}
\def\CP{\C(P^\bullet)}
\def\op{\text{op}}
\providecommand{\add}{\mathop{\rm add}\nolimits}%
\providecommand{\ann}{\mathop{\rm ann}\nolimits}%
\providecommand{\End}{\mathop{\rm End}\nolimits}%
\providecommand{\Ext}{\mathop{\rm Ext}\nolimits}%
\providecommand{\Hom}{\mathop{\rm Hom}\nolimits}%
\providecommand{\inj}{\mathop{\rm inj}\nolimits}%
\providecommand{\per}{\mathop{\rm per}\nolimits}%
\providecommand{\proj}{\mathop{\rm proj}\nolimits}%
\providecommand{\rad}{\mathop{\rm rad}\nolimits}%
\providecommand{\soc}{\mathop{\rm soc}\nolimits}%
\providecommand{\thick}{\mathop{\rm thick}\nolimits}%
\providecommand{\Tr}{\mathop{\rm Tr}\nolimits}%
\renewcommand{\dim}{\mathop{\rm dim}\nolimits}%
\renewcommand{\Im}{\mathop{\rm Im}\nolimits}%
\renewcommand{\mod}{\mathop{\rm mod}\nolimits}%
\renewcommand{\ker}{\mathop{\rm ker}\nolimits}%
\renewcommand{\rad}{\mathop{\rm rad}\nolimits}%
\def \text{\mbox}
\def\ends{\end{enumerate}}
\newcommand{\id}{\operatorname{id}}
\renewcommand{\op}{\operatorname{op}}
\renewcommand{\k}{\mathbf{k}}
\newcommand{\p}{\mathbf{P}}
\newcommand{\q}{\mathbf{Q}}
\newcommand{\x}{\mathbf{X}}
\newcommand{\y}{\mathbf{Y}}
\renewcommand{\c}{\mathbf{C}}
\renewcommand{\d}{\mathbf{D}}
\newcommand{\e}{\mathbf{E}}
\renewcommand{\t}{\mathbf{T}}

\author[Buan]{Aslak Bakke Buan}
\address{
Department of Mathematical Sciences
Norwegian University of Science and Technology
7491 Trondheim
NORWAY
}
\email{aslakb@math.ntnu.no}

\author[Zhou]{Yu Zhou}
\address{
Department of Mathematical Sciences
Norwegian University of Science and Technology
7491 Trondheim
NORWAY
}
\email{yu.zhou@math.ntnu.no}

\begin{abstract}
We give a generalization of the classical tilting theorem of Brenner and Butler.
We show that for a 2-term silting complex $\p$ in the bounded homotopy category
$K^b(\proj A)$ of finitely generated projective modules of a finite dimensional algebra $A$, the algebra $B = \End_{K^b(\proj A)}(\p)$ admits a 2-term silting complex $\q$ with the
following properties: (i) The endomorphism algebra of $\q$ in
$K^b(\proj B)$ is a factor algebra of $A$, and
(ii) there are induced torsion pairs in $\mod A$ and $\mod B$,
such that we obtain natural equivalences induced by
$\Hom$- and $\Ext$-functors. Moreover, we show how the Auslander-Reiten
theory of $\mod B$ can be described in terms of the
Auslander-Reiten
theory of $\mod A$.
\end{abstract}

%\date{3 March 2015}

\thanks{
This work was supported by FRINAT grant number 231000, from the
Norwegian Research Council. Support by the Institut Mittag-Leffler (Djursholm, Sweden) is gratefully
acknowledged.
}

\maketitle

\section*{Introduction}

The fundamental idea of tilting theory is to relate the module
categories of two algebras by the use of tilting functors. Such
functors were introduced by Brenner and Butler, in \cite{bb}, who were
generalizing the ideas in \cite{bgp} and \cite{apr}.

In the seminal paper \cite{hr}, Happel and Ringel introduced the
concepts of {\em tilting modules} and {\em tilted algebras}.
A tilted algebra is the endomorphism ring of a tilting module over a
hereditary finite dimensional algebra.
Happel \cite{h} and Cline, Parshall, Scott \cite{cps} proved that
tilting modules induce derived equivalences, and inspired by this
Rickard \cite{rick} introduced the concept of {\em tilting complexes}, as a necessary ingredient in
developing Morita theory for derived categories.

Over the last 35 years these ideas and concepts have become an essential tool in many branches of mathematics,
including algebraic geometry, finite group theory, algebraic group theory and algebraic topology, see \cite{ahk}. More recently, the development
of cluster tilting theory, see \cite{k, rei}, has spurred further interest in the topic and the relation to cluster algebras \cite{fz}.

Let us briefly recall the main ideas from \cite{bb} and \cite{hr}.
Let $\k$ be a field, let $A$ be a finite dimensional algebra over $\k$, and $T$ a tilting module in $\mod A$, the category of
(finite dimensional) right $A$-modules. That is: $T$ is a module with projective dimension at most 1 ($\pd T \leq 1$),
with $\Ext_A^1(T,T)=0$ and such that $\left| T \right| = \left| A \right|$, where $\left| X \right|$ denotes the number of indecomposable
direct summand in $X$, up to isomorphism.
Let $B = \End_A(T)$. Then $D(T)_B$ is a cotilting module over $B$ and $A \cong \End_B(D(T)_B)$, where $D$ is the $\k$-dual of finite dimensional $\k$-vector spaces.
Cotilting modules are defined by replacing $\pd T \leq 1$ with $\id T \leq 1$ in the definition of tilting modules,
where $\id T$ is the injective dimension of $T$.
Moreover, let $\T = \Fac T$ be the full subcategory of $\mod A$ whose objects are generated by $T$, and
let $\F$ be the full subcategory of $\mod A$ with objects $X$ such that $\Hom_A(\T, X) = 0$. Then $(\T, \F)$ is
a torsion pair in $\mod A$. There is also a torsion pair $(\X,\Y)$ in $\mod B$, induced by the cotilting module $D(T)_B$, and
$\Hom$- and $\Ext$-functors induce inverse equivalences
of $\T$ with $\Y$ and of $\F$ with $\X$.
%If $A$ is hereditary, then the tilted algebra $B$ is of global dimension at most two, and each indecomposable modules
%has either projective dimension at most one, or injective dimension at most one.

We generalize these results to the following setting. We consider a 2-term silting complex $\p$ in the bounded homotopy category $K^b(\proj A)$ of finitely generated projective $A$-modules. This is just a map between projective $A$-modules, considered as a complex, with
the property that $\Hom_{K^b(\proj A)}(\p,\p[1]) = 0$, and such that $\p$
generates $K^b(\proj A)$.
Let $B= \End_{K^b(\proj A)}(\p)$.  It then turns out that $\mod A$ and $\mod B$
can be compared in a way very similar to the setting with tilting modules.

It is known that if $\p$ is a 2-term silting complex in $K^b(\proj A)$ then $H^0(\p)$ is a tilting $(A/\ann H^0(\p))$-module and $H^{-1}(\nu\p)$ is a cotilting $(A/\ann H^{-1}(\nu \p))$-module, where $\nu$ is the Nakayama functor. In particular, the Brenner-Butler tilting theorem and its dual apply in this setting. However, for a general 2-term silting complex $\p$, both $\End_A(H^0(\p))$ and $\End_A(H^{-1}(\nu \p))$ are factor algebras of $\End_{K^b(\proj A)}(\p)$, so they are much smaller than $\End_{K^b(\proj A)}(\p)$. Hence, in the general case, the Brenner-Butler tilting theorem does not give the expected result.

The concept of silting complexes originated from Keller and Vossieck \cite{kv}. In \cite{hkm}, the relation between 2-term silting complexes and  torsion pairs in module categories was first considered. They were mainly dealing with abelian categories with arbitrary coproducts,
but we adapt many of their results to our setting.

More recently, there have been several papers, starting with \cite{ai}, often focusing on various (combinatorial) properties on the set of silting complexes. Silting complexes correspond to
bounded $t$-structures having a heart which is
a length category, i.e. there are finitely many simples, and all
objects have finite length \cite{ky}.

The set of
2-term silting complexes has a natural structure of an ordered
exchange graph, and as beautifully summarized in \cite{by},
this gives links (expressed as isomorphisms of exchange
graphs, see the figure in their introduction) to a plenitude of other structures which have recently
been studied. Among these are support
$\tau$-tilting modules \cite{air} in the module category,
and certain bounded $t$-structures in the bounded derived category,
see \cite[Corollary~4.3]{by}.
Starting with a quiver $Q$, with no loops or oriented 2-cycles,
there is a corresponding cluster algebra $A_Q$,
\cite{fz}, and then we obtain
also a correspondence with the clusters in $A_Q$,
see \cite{air}.
Given $Q$ as above, and a potential, there is a
correspondence with certain bounded $t$-structures
in the finite-dimensional derived category of the corresponding
Ginzburg dg algebra \cite{by,kq}.

In this paper and the forthcoming paper
\cite{bz}, we consider the endomorphism algebras of 2-term silting complexes, which so far
have been less studied. These algebras are isomorphic to the 0-th cohomology of the corresponding differential graded endomorphism algebras.

The paper is organized as follows.
In the first section, we review some background and notation, and state the main results.
In Section 2, we consider links between silting theory, $t$-structures and
torsion pairs. In Section 3, we prove further properties of 2-term silting complexes, and
the main result is proved in Section 4. In Section 5 we apply the main result
to obtain some information about the AR-theory of the endomorphism ring of a
2-term silting complex, inspired by similar
results in classical tilting theory, see \cite{ass}.

\bigskip

\noindent \textbf{Acknowledgements:}
We would like to thank the anonymous referee for careful reading and for many remarks which improved the presentation of the paper.

\section{Background and main result}

Let $A$ be a finite dimensional $\k$-algebra, and  $\mod A$ the
category of finitely generated right $A$-modules. Let $\DA$ be the
bounded derived category, with shift functor $[1]$. Whenever we consider subcategories of $\mod A$ or $\DA$, they are assumed to be full and closed under isomorphism.
For an object $M$ in an additive category, let $\add M$ denote the additive closure, i.e. the full subcategory
generated by all direct summands of direct sums of copies of $M$.

Recall that a {\em torsion pair} in $\mod A$, is a pair $(\X,\Y)$ of subcategories of $\mod A$, with the properties
that
\begin{itemize}
\item[-] $\Hom_A(\X, Y)= 0$ if and only if $Y$ is in $\Y$, and
\item[-] $\Hom_A(X, \Y)= 0$ if and only if $X$ is in $\X$.
\end{itemize}

If $M$ is an object in $\mod A$, then there is an exact sequence,
$$0 \to tM \to M \to M/tM \to 0$$
called the {\em canonical sequence} of $M$, and
with $tM$ in $\X$ and with $M/tM$ in $\Y$. Let $\proj A$ denote
the full subcategory of $\mod A$ generated by the projective modules.
We consider 2-term complexes $\p$ in $K^b(\proj A)$. These are
complexes $\p= \{P^i\}$ with $P^i=0$ for $i\neq -1,0$. Such a complex is
called {\em presilting} if $\Hom_{K^b(\proj A)}(\p,\p[1])=0$ and {\em silting} if in addition $\thick \p=K^b(\proj A)$.
Here, for an object $\x$ in $K^b(\proj A)$,
we denote by $\thick \x$ the smallest triangulated
subcategory closed under direct summands containing
$\x$. A 2-term silting complex $\p$ is {\em tilting}, if
in addition $\Hom_{K^b(\proj A)}(\p, \p[-1]) =0$.

Let $\p$ be a 2-term silting complex, and
consider the full subcategories of $\mod A$ given by
\begin{itemize}
\item[\ ] $\T(\p) =\{X \in \mod A \mid \Hom_{\DA}(\p,X[1]) = 0\},$ and
 \item[\ ] $\F(\p) =\{Y \in \mod A \mid \Hom_{\DA}(\p,Y) = 0 \}.$
\end{itemize}

Note that if $\p$ is a projective presentation of a tilting module $T$, then $\p$ is quasi-isomorphic to its 0-th cohomology $T=H^0(\p)$. Hence $\T(\p)=\ker\Ext^1_A(T,-)$ and $\F(\p)=\ker\Hom_A(T,-)$ and these are the classes considered in classical tilting theory. 

Our main theorem is a generalization of the Brenner-Butler tilting theorem to 2-term silting complexes. Note that (a) is from \cite{hkm}, (b) is from \cite{w}, while (c) and (d) can be easily deduced from \cite{by}.

\begin{theorem}\label{Main1}
Let $\p$ be a 2-term silting complex in $K^b(\proj A)$, and let $B = \End_{\DA}(\p)$.
\begin{itemize}
\item[(a)] The pair $(\T(\p), \F(\p))$ is a torsion pair in $\mod A$.
\item[(b)] There is a triangle
 $$A \rightarrow \p' \s{f}\rightarrow \p'' \rightarrow A[1]$$
with $\p', \p'' $ in $\add \p$.

Consider the 2-term complex $\q$ in $K^b(\proj B)$ induced by the map
$$\Hom_{\DA}(\p, f) \colon \Hom_{\DA}(\p, \p') \rightarrow \Hom_{\DA}(\p, \p'').$$

\item[(c)] $\q$ is a 2-term silting complex in $K^b(\proj B)$.
\item[(d)]  There is an algebra epimorphism $\Phi_{\p} \colon A \to  \overline{A}= \End_{D^b(B)}(\q)$.
\item[(e)] $\Phi_\p$ is an isomorphism if and only if $\p$ is tilting.

Let $\Phi_{\ast}\colon \mod \overline{A} \hookrightarrow \mod A$ be the induced inclusion functor.
\item[(f)] The restriction of the functors $\Hom_{\DA}(\p, -)$ and $\Phi_{\ast} \Hom_{D^b(B)}(\q, - [1])$ to $\T(\p)$ and $\F(\q)$
is a pair of inverse equivalences.
\item[(g)] The restriction of the functors $\Hom_{\DA}(\p,- [1])$ and $\Phi_{\ast} \Hom_{D^b(B)}(\q, -)$ to $\F(\p)$ and $\T(\q)$
is a pair of inverse equivalences.
\end{itemize}
\end{theorem}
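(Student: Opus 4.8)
The plan is to establish the equivalences in (f) and (g) by a careful bookkeeping of $\Hom$-spaces, using the defining triangle in (b) as the universal ``projective resolution'' of $A$ relative to $\add\p$. First I would record the basic homological vanishing that comes for free from the presilting hypothesis: since $\Hom_{\DA}(\p,\p[i])=0$ for $i\neq 0$, applying $\Hom_{\DA}(\p,-)$ to the triangle $A\to\p'\to\p''\to A[1]$ shows that $\q$, viewed in $K^b(\proj B)$, fits into $\Hom_{\DA}(\p,A)\to\Hom_{\DA}(\p,\p')\to\Hom_{\DA}(\p,\p'')\to\Hom_{\DA}(\p,A[1])$, so that $H^0(\q)=\coker\Hom(\p,f)$ and $H^{-1}(\q)$ is (co)related to $\Hom_{\DA}(\p,A[-1])$. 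The key structural input I would invoke is the description, available from the $t$-structure material in Section~2, of $\T(\p)$ and $\F(\p)$ as the coaisle and shifted aisle of the bounded $t$-structure associated to $\p$; every $M\in\mod A$ has a canonical triangle (the truncation triangle) with outer terms in $\F(\p)[?]$ and $\T(\p)$, and the heart of this $t$-structure is equivalent to $\mod B$ via $\Hom_{\DA}(\p,-)$.

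Next I would set up the two functors explicitly. For $X\in\T(\p)$, since $\Hom_{\DA}(\p,X[1])=0$, the object $\Hom_{\DA}(\p,X)$ is concentrated in degree $0$ of the heart, hence is an honest $B$-module; I would check it lies in $\F(\q)$ by applying $\Hom_{D^b(B)}(\q,-)$ and using the dual description of $\F(\q),\T(\q)$ coming from part (c) applied to $\p$ and (b) applied to $\q$. Conversely, for the functor in the other direction I would use $\Hom_{D^b(B)}(\q,-[1])$ precomposed with $\Phi_\ast$: given $N\in\F(\q)$, the object $\Hom_{D^b(B)}(\q,N[1])$ should be the $\overline{A}$-module living in degree $0$, and $\Phi_\ast$ reinterprets it as an $A$-module annihilated by $\ker\Phi_\p$, which one checks lands in $\T(\p)$. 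The heart of the argument that these are mutually inverse is a double-dualization: the triangle in (b) is ``self-reproducing'' in the sense that applying $\Hom_{\DA}(\p,-)$ to it yields (a triangle isomorphic to) the defining triangle for $\q$ in $K^b(\proj B)$, and then applying $\Hom_{D^b(B)}(\q,-)$ to the latter recovers the original triangle up to the epimorphism $\Phi_\p$; chasing an arbitrary $X\in\T(\p)$ through both functors and comparing via the five lemma on the relevant long exact sequences gives $\Phi_\ast\Hom_{D^b(B)}(\q,\Hom_{\DA}(\p,X)[1])\cong X$ naturally. Part (g) is entirely dual: one swaps the roles of $\T$ and $\F$ and shifts, using that $\F(\p)\subseteq\mod A$ sits in the $t$-structure picture as the part seen after a shift, so that $\Hom_{\DA}(\p,Y[1])$ for $Y\in\F(\p)$ is the degree-$0$ $B$-module and the inverse is $\Phi_\ast\Hom_{D^b(B)}(\q,-)$ restricted to $\T(\q)$.

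The main obstacle I anticipate is \emph{not} the formal adjunction-style bookkeeping but rather the bookkeeping around $\Phi_\p$: the functor $\Hom_{D^b(B)}(\q,-)$ naturally produces $\overline{A}=\End_{D^b(B)}(\q)$-modules, and one must be scrupulous that the composite lands in the image of $\Phi_\ast$ (equivalently, is killed by $\ker\Phi_\p$) and that this identification is compatible with the canonical sequences of the torsion pairs on both sides. Concretely, one needs the compatibility $\Phi_\ast(\T(\q))\subseteq\T(\p)$ (resp. for $\F$), which should follow by writing $\ker\Phi_\p$ in terms of $\Hom_{\DA}(\p,\p[-1])$ via (e) and checking it acts as zero on the relevant $\Hom$-spaces; getting this vanishing cleanly, without circularity with part (e), is the delicate point. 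A secondary technical nuisance is verifying that $\Hom_{\DA}(\p,X)$ for $X\in\T(\p)$ really is a finitely generated projective-resolution-free $B$-module (i.e. lies in the heart, not just in $\mod B$) — this is where one uses that $\p$ is 2-term, so the heart is ``two-term'' and $\T(\p),\F(\p)$ together generate $\mod A$, forcing the cohomology of $\Hom_{\DA}(\p,X)$ to be concentrated appropriately. Once these points are settled, the naturality statements are formal, and the inverse-equivalence claims follow from the five lemma applied to the images of the canonical sequences under the four functors in play.
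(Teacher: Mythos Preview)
Your overall architecture is the paper's: use the heart equivalence $\Hom_{\DA}(\p,-)\colon\C(\p)\to\mod B$ to get $\T(\p)\simeq\Y(\p)$ and $\F(\p)\simeq\X(\p)$, identify $\Y(\p)=\F(\q)$ and $\X(\p)=\T(\q)$, and then show that applying $\Hom_{D^b(B)}(\q,-)$ (suitably shifted) and $\Phi_\ast$ brings you back. The paper carries this out via two concrete computations rather than a five-lemma chase. First, $\F(\q)=\Y(\p)$ is proved by computing $H^0(\q)\cong\Hom_{\DA}(\p,A/tA[1])$ directly from the triangle $\Delta_\p$ and invoking $\T(\q)=\Fac H^0(\q)$. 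Second, for $Y\in\F(\p)$ one computes $\Hom_{D^b(B)}(\q,\Hom_{\DA}(\p,Y[1]))$ as $\ker\Hom_B(\Hom(\p,f),\Hom(\p,Y[1]))\cong\ker\Hom_{\DA}(f,Y[1])\cong\Hom_A(A,Y)$, the last isomorphism coming from the long exact sequence of $\Delta_\p$ and $\Hom_{\DA}(\p',Y)=0$. This is a direct kernel computation, not a five-lemma argument on canonical sequences.

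Two places where your sketch diverges from what actually works. First, a notational slip: the compatibility you need is $\Phi_\ast(\X(\q))=\T(\p)$ and $\Phi_\ast(\Y(\q))=\F(\p)$, not $\Phi_\ast(\T(\q))\subseteq\T(\p)$; the functor $\Hom_{D^b(B)}(\q,-[1])$ takes $\F(\q)$ to $\X(\q)$, and it is $\X(\q)$ that must land in $\T(\p)$ under $\Phi_\ast$. Second, your plan to verify the $A$-module structure by ``writing $\ker\Phi_\p$ in terms of $\Hom_{\DA}(\p,\p[-1])$ via (e) and checking it acts as zero'' is both circular (as you feared) and unnecessary. The paper never touches (e) for this step: having constructed $\Phi_\p$ explicitly so that $\Phi_\p(a)=(b,c)$ extends $a$ to a morphism of the triangle $\Delta_\p$, the $A$-linearity of the vector-space isomorphism $\varphi\colon\Hom_A(A,Y)\to\Hom_{D^b(B)}(\q,\Hom_{\DA}(\p,Y[1]))$ follows in one line from $gc=a[1]g$, giving $\varphi(va)=\Phi_\p(a)\cdot\varphi(v)$. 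So the ``delicate point'' you flag dissolves once $\Phi_\p$ is defined via the triangle rather than characterized by its kernel.
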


We give a brief explanation on how (c) and (d) can be deduced from \cite{by}. Let $\widetilde{B}$ be the differential graded endomorphsim algebra of $\p$. Then $B\cong H^0(\widetilde{B})$. So the canonical epimorphism $\widetilde{B}\to B$ induces an exact functor $\phi: \per \widetilde B\to \per B$. Note that $\per\widetilde{B}$ is equivalent to $K^b(\proj A)$ and $\per B=K^b(\proj B)$. Then by \cite[Proposition~A.3]{by}, the functor $\phi$ induces is a bijection between a certain set of silting complexes in $K^b(\proj A)$ and the set of 2-term silting complexes in $K^b(\proj B)$. We remark that the complex $\q$ in (c), which we will construct in Section~\ref{sec:theory}, corresponds (up to isomorphism) to $A$ (as a stalk complex in $K^b(\proj A)$) under this bijection. The epimorphism in (d) is then obtained by \cite[Proposition~A.5]{by}.

We will use the following notation. For any subcategory $\T$ of $\mod A$, an $A$-module $X$ in $\T$ is called
{\em $\Ext$-projective} in $\T$ if $\Ext_A^1(X,Y)=0$ for all $Y$ in $\T$;
dually, $X$ in $\T$ is called {\em $\Ext$-injective} in $\T$ if
$\Ext_A^1(Y,X)=0$ for all $Y$ in $\T$. Furthermore, we let
$\nu$ denote the Nakayama functor $\nu = D\Hom_A(-,A)$, which is an equivalence from $\proj A$ to the full subcategory $\inj A$ of $\mod A$ generated by the injective modules. Then $\nu$ induces an equivalence
\[\nu: K^b(\proj A)\rightarrow K^b(\inj A).\]
It is well known that there is an isomorphism
\[\Hom_\DA(\x,\nu \y)\cong D\Hom_\DA(\y,\x)\]
for any $\x,\y\in K^b(\proj A)$ (see e.g. \cite[Chapter 1, Section 4.6]{h}). Note that the derived Nakayama functor $\nu$ in general is not a Serre functor on the bounded derived category $\DA$, as the algebra $A$ is not assumed to have finite global dimension.

\section{2-term silting complexes, $t$-structures and torsion pairs}\label{sec:silting}

%The proof of Theorem \ref{Main1} will be carried out in this
%section, it will be completed in section \ref{sec:tst}. Inspired by similar
%results in classical tilting, see \cite{ass}, we will also give some
%applications towards comparing the AR-theory of an algebra obtained as
%the endomorphism ring of a silting object with the AR-theory of the
%original algebra. This will be carried out in section \ref{sec:ar}.

In this section we recall the notion of a $t$-structure \cite{bbd} in a
triangulated category,  and the interplay between $t$-structures, torsion pairs
and 2-term silting complexes.

A pair $(\X,\Y)$ of subcategories of $\DA$ is called a {\em $t$-structure} if and only if the following conditions hold:
\begin{description}
  \item[(1)] $\X[1]\subset\X$ and $\Y[-1]\subset\Y$;
  \item[(2)] $\Hom_\DA(\x,\y[-1])=0$ for any $\x\in\X$ and $\y\in\Y$;
  \item[(3)] for any $\c\in\DA$, there is a triangle
  $$\x\rightarrow \c\rightarrow \y[-1] \rightarrow \x[1]$$
  with $\x\in\X$ and $\y\in\Y$.
\end{description}

Silting complexes give rise to $t$-structures in a natural way.
For an integer $m$, consider the pair of subcategories

$$D^{\leq m}(\p) = \{\x\in D^b(A)\mid \Hom_{\DA}(\p,\x[i])=0\text{, for $i>m$}\}$$

\noindent and

$$D^{\geq m}(\p) = \{\x\in \DA \mid \Hom_{\DA}(\p,\x[i])=0\text{, for $i<m$}\}$$

\noindent in the derived category $\DA$.

Observe that $\T(\p) = D^{\leq 0}(\p) \cap \mod A$ and $\F(\p) =  D^{\geq 1}(\p) \cap \mod A$.
We have the following result. Here, (b) is from \cite{hkm} and
(a) is from \cite{ky}.
Note also that a version of (a) was proved in \cite{hkm}, in the setting
of abelian categories with arbitrary coproducts.

\begin{theorem}\label{thm:t-structure}
Let $\p$ be a 2-term silting complex in $K^b(\proj A)$.
\begin{itemize}
\item[(a)] The pair $(D^{\leq0}(\p),D^{\geq0}(\p))$ is a $t$-structure in $\DA$.
\item[(b)] The pair $(\T(\p), \F(\p))$ is a torsion pair in $\mod A$.
\end{itemize}
\end{theorem}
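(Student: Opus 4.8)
The plan is to establish (a) directly and then deduce (b) from (a) together with the torsion-pair axioms. For (a), I would first observe that conditions (1) and (2) in the definition of a $t$-structure are essentially formal. Condition (2) says $\Hom_{\DA}(\x,\y[-1])=0$ for $\x\in D^{\leq 0}(\p)$ and $\y\in D^{\geq 0}(\p)$; since $\p$ generates $\DA$ as a thick subcategory (using $\thick\p = K^b(\proj A)$ and that $K^b(\proj A)$ generates $\DA$, or rather that every object of $\DA$ is computed by $\Hom$'s from shifts of $\p$), an object $\x$ with $\Hom_\DA(\p,\x[i]) = 0$ for all $i$ must be zero; combining the vanishing defining $D^{\leq0}$ (for $i>0$) with that defining $D^{\geq0}[-1]$ (for $i<1$, i.e. $i\le 0$) forces $\Hom_\DA(\p,\cone(\text{any map }\x\to\y[-1])[i])$ to sit in a long exact sequence that pins the map down to zero — more cleanly, I would argue that any $f\colon \x\to \y[-1]$ factors through an object in $\thick\p$ whose graded $\Hom$ from $\p$ vanishes, hence $f=0$. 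Condition (1) is immediate from the definitions, since shifting $\x$ only reindexes the vanishing ranges in the right direction. The substantive part is condition (3): the existence of the truncation triangle.

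For condition (3), I would build the truncation triangle for an arbitrary $\c\in\DA$ by the standard "silting truncation" argument. Since $\p$ is 2-term presilting with $\Hom_{\DA}(\p,\p[1])=0$, the subcategory $\add\p$ is a covariantly/contravariantly finite "silting subcategory," and one shows by induction on the width of $\c$ (the length of the interval $[m,n]$ with $H^i(\c)=0$ outside it) that one can approximate $\c$ by an object built from $\add\p$ and its positive shifts. Concretely: take a triangle $\p_0 \to \c \to \c' \to \p_0[1]$ where $\p_0\to\c$ is a minimal right $\add\p$-approximation; then check, using the presilting condition and that $\p$ is 2-term, that $\c'\in D^{\ge 1}(\p)$ shifted appropriately, or iterate. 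The key technical input is that $2$-term-ness keeps everything within two "layers," so the induction terminates after finitely many steps and yields $\x\in D^{\le 0}(\p)$ and $\y[-1]\in D^{\ge 0}(\p)[-1] = D^{\ge 1}(\p)[-1]$ with a triangle $\x\to\c\to\y[-1]\to\x[1]$. I would cite \cite{ky} for the fact that a silting object in this generality yields a bounded $t$-structure, and only sketch the truncation; the main obstacle is making the termination of the truncation process precise while keeping track of which shift ranges the successive cones land in.

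For (b), given that $(D^{\le 0}(\p),D^{\ge 0}(\p))$ is a $t$-structure, I would restrict to the heart or rather intersect with $\mod A$. Using the identifications already recorded in the excerpt, $\T(\p) = D^{\le 0}(\p)\cap\mod A$ and $\F(\p) = D^{\ge 1}(\p)\cap\mod A = D^{\ge 0}(\p)[-1]\cap\mod A$. The Hom-orthogonality $\Hom_A(\T(\p),\F(\p)) = 0$ is then immediate from $t$-structure axiom (2). For the two "if and only if" directions of the torsion-pair definition, I would take $M\in\mod A$ and apply the truncation triangle of $M$ (with respect to the $t$-structure), then argue that because $\p$ is 2-term the truncation triangle of a module has its two outer terms concentrated in the single degree $0$, hence lies entirely in $\mod A$, giving the canonical sequence $0\to tM\to M\to M/tM\to 0$ with $tM\in\T(\p)$, $M/tM\in\F(\p)$. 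This last degree-concentration claim — that truncating a stalk complex $M$ by a $2$-term silting $t$-structure produces stalk complexes — is the only point needing a short argument, and it follows from the long exact cohomology sequence of the truncation triangle together with the vanishing ranges defining $D^{\le0}(\p)$ and $D^{\ge1}(\p)$ being "tight" around degree $0$ for $2$-term $\p$. With the canonical sequences in hand, the characterizations of $\T(\p)$ and $\F(\p)$ by $\Hom$-vanishing follow formally, completing the proof of (b).
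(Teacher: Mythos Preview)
The paper does not give its own proof of this theorem; it simply attributes (a) to \cite{ky} and (b) to \cite{hkm}. So there is no ``paper's proof'' to compare against, only the student's sketch to assess on its own merits.

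Your outline has one genuine gap: the claim that axiom~(2) of the $t$-structure is ``essentially formal.'' It is not. Knowing that $\Hom_\DA(\p,\x[i])=0$ for $i>0$ and $\Hom_\DA(\p,\y[-1][i])=0$ for $i\le 0$ gives you no direct control over $\Hom_\DA(\x,\y[-1])$. Your cone argument does not work: from the long exact sequence you only get $\Hom_\DA(\p,\cone(f)[0])=0$, not vanishing in all degrees, and in any case $\cone(f)=0$ would mean $f$ is an isomorphism rather than zero. The ``factors through an object with vanishing graded $\Hom$'' line is too vague to evaluate. The correct way to obtain (2) is as a byproduct of (3): the iterative approximation you describe for (3) actually shows that every object of $D^{\le 0}(\p)$ lies in the extension-closure of $\{\p[j]:j\ge 0\}$, and since $D^{\ge 1}(\p)$ is by definition the right $\Hom$-orthogonal of all $\p[j]$ with $j\ge 0$, axiom~(2) then follows immediately. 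You should reorganize so that (3) is proved first and (2) is deduced from it, rather than treating (2) as a triviality.

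For (b) your strategy is sound, but the key step --- that the $t$-structure truncation of a module $M$ has both outer terms concentrated in degree~$0$ --- is not just a consequence of the long exact cohomology sequence alone. You need to know that $D^{\le 0}(\p)\subseteq D^{\le 0}_{\mathrm{std}}$ and $D^{\ge 1}(\p)\subseteq D^{\ge 0}_{\mathrm{std}}$, which is exactly what Lemma~\ref{lem:CPdot} (the short exact sequence $0\to\Hom_\DA(\p,H^{i-1}(\x)[1])\to\Hom_\DA(\p,\x[i])\to\Hom_\DA(\p,H^i(\x))\to 0$) provides, together with the fact that a module $X$ with $\Hom_\DA(\p,X)=\Hom_\DA(\p,X[1])=0$ must be zero. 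Once you invoke that lemma, the long exact sequence argument goes through and your deduction of (b) from (a) is correct.
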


The following lemma will be useful for later.

\begin{lemma}\label{lem:CPdot}
For any $\x\in\DA$ and $i\in\mathbb{Z}$, there is a short exact
sequence, $$0\rightarrow\Hom_{\DA}(\p,H^{i-1}(\x)[1])\rightarrow\Hom_{\DA}(\p,\x[i])\rightarrow
\Hom_{\DA}(\p,H^i(\x))\rightarrow0.$$
\end{lemma}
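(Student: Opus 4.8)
The plan is to combine the canonical truncation triangles of the standard $t$-structure on $\DA$ with the observation that, because $\p$ is a $2$-term complex, the functor $\Hom_{\DA}(\p,-)$ only detects cohomology in degrees $-1$ and $0$. Concretely, the first step is the vanishing statement: if $\y\in\DA$ satisfies $H^{-1}(\y)=0=H^0(\y)$, then $\Hom_{\DA}(\p,\y)=0$. To see this, write $\p=(P^{-1}\to P^0)$ with $P^{-1},P^0$ in degrees $-1,0$; the stupid truncation gives a triangle $P^0\to\p\to P^{-1}[1]\to P^0[1]$. Since $P^0$ and $P^{-1}$ are projective, $\Hom_{\DA}(P^0,\y)\cong\Hom_A(P^0,H^0(\y))$ and $\Hom_{\DA}(P^{-1}[1],\y)\cong\Hom_A(P^{-1},H^{-1}(\y))$, both of which vanish by hypothesis; the long exact sequence obtained by applying $\Hom_{\DA}(-,\y)$ to the triangle then forces $\Hom_{\DA}(\p,\y)=0$. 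In particular this holds whenever $\y$ has cohomology concentrated in degrees $\geq 1$, and whenever it has cohomology concentrated in degrees $\leq -2$.

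For the main argument, fix $\x\in\DA$ and $i\in\mathbb{Z}$, and apply $\Hom_{\DA}(\p,-)$ to the canonical truncation triangle $\tau_{\leq i-1}\x\to\x\to\tau_{\geq i}\x\to(\tau_{\leq i-1}\x)[1]$ shifted by $[i]$. The resulting long exact sequence contains
$$\Hom_{\DA}(\p,(\tau_{\geq i}\x)[i-1])\to\Hom_{\DA}(\p,(\tau_{\leq i-1}\x)[i])\to\Hom_{\DA}(\p,\x[i])\to\Hom_{\DA}(\p,(\tau_{\geq i}\x)[i])\to\Hom_{\DA}(\p,(\tau_{\leq i-1}\x)[i+1]),$$
and since $(\tau_{\geq i}\x)[i-1]$ has cohomology in degrees $\geq 1$ while $(\tau_{\leq i-1}\x)[i+1]$ has cohomology in degrees $\leq -2$, the first step makes both outer groups vanish, leaving a short exact sequence with middle term $\Hom_{\DA}(\p,\x[i])$. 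It then remains to identify the two outer terms. Applying $\Hom_{\DA}(\p,-)$ to the triangle $H^i(\x)\to(\tau_{\geq i}\x)[i]\to(\tau_{\geq i+1}\x)[i]\to H^i(\x)[1]$, whose third term and the $[-1]$-shift of it both have cohomology in degrees $\geq 1$, gives $\Hom_{\DA}(\p,(\tau_{\geq i}\x)[i])\cong\Hom_{\DA}(\p,H^i(\x))$. Similarly, applying $\Hom_{\DA}(\p,-)$ to the triangle $(\tau_{\leq i-2}\x)[i]\to(\tau_{\leq i-1}\x)[i]\to H^{i-1}(\x)[1]\to(\tau_{\leq i-2}\x)[i+1]$, whose first term and its $[1]$-shift both have cohomology in degrees $\leq -2$, gives $\Hom_{\DA}(\p,(\tau_{\leq i-1}\x)[i])\cong\Hom_{\DA}(\p,H^{i-1}(\x)[1])$. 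Substituting these two isomorphisms into the short exact sequence above yields exactly the claimed sequence, with the subobject corresponding to $H^{i-1}(\x)[1]$ and the quotient to $H^i(\x)$.

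The argument is essentially bookkeeping, and the only thing that really needs care is keeping track of the degree shifts so that every error term lands in a range ($\geq 1$ or $\leq -2$) where the first-step vanishing applies. Conceptually, the statement is the degeneration at the second page of the hyper-$\Ext$ spectral sequence $\Hom_{\DA}(\p,H^q(\x)[p])\Rightarrow\Hom_{\DA}(\p,\x[p+q])$, which for a $2$-term complex $\p$ is supported in the two columns $p=0$ and $p=1$; I find the truncation version above cleaner to write out and more in keeping with the elementary style of the rest of the paper.
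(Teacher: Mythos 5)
Your proof is correct. Note that the paper does not actually write out an argument here: it simply refers to \cite[Lemma~2.5]{hkm} and remarks that the proof there carries over. What you have written is a complete, self-contained version of essentially that standard argument. The two ingredients are exactly right: first, the vanishing $\Hom_{\DA}(\p,\y)=0$ whenever $H^{-1}(\y)=H^0(\y)=0$, obtained from the triangle $P^0\to\p\to P^{-1}[1]\to P^0[1]$ together with $\Hom_{\DA}(P,\y)\cong\Hom_A(P,H^0(\y))$ for projective stalk complexes $P$; and second, the canonical truncation triangles, whose error terms you correctly place in cohomological degrees $\geq 1$ or $\leq -2$ after shifting. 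The degree bookkeeping checks out at every step ($(\tau_{\geq i}\x)[i-1]$ in degrees $\geq 1$, $(\tau_{\leq i-1}\x)[i+1]$ in degrees $\leq -2$, and likewise for the two identifications of the outer terms), and the maps you produce are the natural ones induced by $\x[i]\to(\tau_{\geq i}\x)[i]$ and $(\tau_{\leq i-1}\x)[i]\to H^{i-1}(\x)[1]$. Your closing remark that the lemma is the two-column degeneration of the hyper-$\Ext$ spectral sequence is an accurate conceptual summary; the truncation argument you give is the appropriate elementary substitute for it.
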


\begin{proof}
See \cite[Lemma~2.5]{hkm}, the proof given there works also in our case.
\end{proof}

Let $\C(\p) = D^{\leq 0}(\p) \cap D^{\geq 0}(\p)$ be the heart of the $t$-structure $(D^{\leq0}(\p),D^{\geq0}(\p))$.
The following summarizes the main features of $\C(\p)$.

\begin{theorem}\label{thm-cp}
Let $\p$ be a 2-term silting complex in $K^b(\proj A)$.
\begin{itemize}
\item[(a)] $\C(\p)$ is an abelian category and the short exact
  sequences in $\C(\p)$ are precisely the triangles in $D^b(A)$ all of
  whose
vertices are objects in $\C(\p)$.
\item[(b)] $(\F(\p)[1], \T(\p))$ is a torsion pair in $\C(\p)$.
\item[(c)] For a complex $\x$ in $\DA$, we have that  $\x$ is in $\C(\p)$ if and only if $H^0(\x)$ is in $\T(\p)$,
$H^{-1}(\x)$ is in $\F(\p)$ and $H^i(\x) = 0$ for $i \neq -1, 0$.
\item[(d)] $\Hom_{\DA}(\p, - )\colon \C(\p) \to \mod B$ is an equivalence of (abelian) categories.
\end{itemize}
\end{theorem}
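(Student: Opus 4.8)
The four statements are tightly intertwined, so I would prove them roughly simultaneously, extracting (a) and (d) from the general machinery of $t$-structures and then reading off (b) and (c) as consequences. The starting point is Theorem~\ref{thm:t-structure}(a): $(D^{\leq 0}(\p), D^{\geq 0}(\p))$ is a $t$-structure on $\DA$, and it is a general fact (from \cite{bbd}) that the heart of any $t$-structure is an abelian category in which the short exact sequences are exactly the triangles of $\DA$ with all three vertices in the heart. This gives (a) directly, with essentially nothing to check beyond quoting \cite{bbd}. For (d), the standard approach is to produce a cohomological functor on $\DA$ (here $\Hom_{\DA}(\p,-)$, which is cohomological because $\p$ is a single object), observe that it kills $D^{\leq 0}(\p)[1]$ and $D^{\geq 0}(\p)[-1]$, hence restricts to an exact functor $\C(\p)\to\mod B$. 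To see it is an equivalence I would check it is fully faithful and essentially surjective: full faithfulness on $\C(\p)$ amounts to showing $\Hom_{\DA}(\x,\y)\xrightarrow{\sim}\Hom_B(\Hom_{\DA}(\p,\x),\Hom_{\DA}(\p,\y))$ for $\x,\y\in\C(\p)$, which one proves by dévissage in $\x$ (and $\y$) down to $\p$ itself, where both sides are $B=\End_{\DA}(\p)$; essential surjectivity follows since every $B$-module has a two-term projective presentation, which is the image under $\Hom_{\DA}(\p,-)$ of the cone of a map in $\add\p$, and such a cone lies in $\C(\p)$ because $\p$ is presilting (so $\Hom_{\DA}(\p,\p[1])=0$ forces the cohomology of the cone into the right range).

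For part (c), the key tool is Lemma~\ref{lem:CPdot}: for each $i$ there is a short exact sequence relating $\Hom_{\DA}(\p,\x[i])$ to $\Hom_{\DA}(\p,H^{i-1}(\x)[1])$ and $\Hom_{\DA}(\p,H^i(\x))$. Because $\p$ is a two-term complex concentrated in degrees $-1,0$, one shows $\Hom_{\DA}(\p,M[i])=0$ for any module $M$ whenever $i\geq 2$ or $i\leq -2$, and $\Hom_{\DA}(\p,M[1])=0$ forces (by definition) $M\in\T(\p)$ while $\Hom_{\DA}(\p,M)=0$ forces $M\in\F(\p)$. Feeding this into Lemma~\ref{lem:CPdot}, the condition $\x\in D^{\leq 0}(\p)\cap D^{\geq 0}(\p)$ translates, degree by degree, into: $H^i(\x)=0$ for $i\neq -1,0$, $H^0(\x)\in\T(\p)$, and $H^{-1}(\x)\in\F(\p)$. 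I would write this out carefully for $i=1$ (giving $H^0(\x)\in\T(\p)$ and using $\Hom(\p,H^1(\x))=0$ plus a further step to kill $H^1$) and $i=0$ (giving $H^{-1}(\x)\in\F(\p)$), together with the vanishing of the remaining cohomologies; this is the step with the most bookkeeping but no real difficulty.

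Finally, for (b), I would use (c) to identify objects of $\C(\p)$ with pairs $(H^{-1},H^0)$, and set up the candidate torsion pair: the torsion-free class is $\F(\p)[1]$ (objects of $\C(\p)$ with $H^0=0$, i.e.\ shifts of modules in $\F(\p)$) and the torsion class is $\T(\p)$ (objects of $\C(\p)$ with $H^{-1}=0$, i.e.\ genuine modules in $\T(\p)$, viewed as stalk complexes). The orthogonality $\Hom_{\C(\p)}(\T(\p),\F(\p)[1])=0$ is immediate since $\Hom_{\DA}(M,N[1])=\Ext^1_A(M,N)$-type groups vanish in the relevant degree, or more directly because a map of stalk complexes in these positions is forced to be zero; and for any $\x\in\C(\p)$ the triangle $H^{-1}(\x)[1]\to\x\to H^0(\x)\to H^{-1}(\x)[2]$ coming from the standard truncation has all vertices in $\C(\p)$ by (c), hence by (a) it is the desired canonical short exact sequence. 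The only point requiring care is the completeness condition of the torsion pair — i.e.\ that $\T(\p)$ and $\F(\p)[1]$ genuinely generate $\C(\p)$ under extensions in the abelian-category sense — but this is exactly what the truncation triangle provides once (a) tells us it is a short exact sequence in $\C(\p)$. I expect the main obstacle to be the full faithfulness argument in (d): making the dévissage rigorous requires knowing that $\C(\p)$ is generated from $\add\p$ by (finitely many) extensions, which in turn leans on the length/finiteness of the heart; I would handle this by invoking that the heart is a length category with the images of indecomposable summands of $\p$ among its projective generators, reducing every comparison to the case $\x=\y=\p$.
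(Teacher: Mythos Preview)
Your handling of (a) and (c) is correct and essentially matches the paper: (a) is quoted from \cite{bbd}, and (c) is extracted from Lemma~\ref{lem:CPdot} just as you outline. The paper actually packages (b) and (c) together, using Lemma~\ref{lem:CPdot} to identify $(D^{\leq 0}(\p),D^{\geq 0}(\p))$ with the Happel--Reiten--Smal\o\ $t$-structure of the torsion pair $(\T(\p),\F(\p))$ and then quoting \cite{hrs}; for (d) it simply cites \cite{hkm} and \cite{iy}. Your direct arguments for (b) and (d), however, each contain a genuine error.

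In (b) you have the torsion pair backwards. The statement says $(\F(\p)[1],\T(\p))$ is a torsion pair, so $\F(\p)[1]$ is the \emph{torsion} class and $\T(\p)$ the \emph{torsion-free} class; your own truncation triangle $H^{-1}(\x)[1]\to\x\to H^0(\x)$ already confirms this, since the subobject sits in $\F(\p)[1]$. The orthogonality one must check is therefore $\Hom_{\DA}(\F(\p)[1],\T(\p))=0$, which holds because $\Hom_{\DA}(N[1],M)=\Hom_{\DA}(N,M[-1])=0$ for modules $M,N$. The vanishing you assert, $\Hom_{\C(\p)}(\T(\p),\F(\p)[1])=0$, is \emph{false} in general: for $M\in\T(\p)$ and $N\in\F(\p)$ this is $\Hom_{\DA}(M,N[1])\cong\Ext^1_A(M,N)$, precisely the group appearing in Corollary~\ref{cor:hom}, and it is typically nonzero. (A ``map of stalk complexes in these positions'' is not forced to be zero in $\DA$; that reasoning applies in the homotopy category of complexes, not in the derived category.)

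In (d) your essential surjectivity argument has a gap. For a map $f\colon\p_1\to\p_0$ in $\add\p$, the cone $\c=\cone(f)$ lies in $D^{\leq 0}(\p)$ by the presilting condition, but lying in the heart also requires $\Hom_{\DA}(\p,\c[-1])=0$; the long exact sequence shows this involves $\Hom_{\DA}(\p,\p_0[-1])$, which need not vanish unless $\p$ is tilting. The repair is to replace $\c$ by its $t$-structure truncation $\tau^{\geq 0}_{\p}\c$: since $\Hom_{\DA}(\p,-)$ annihilates $D^{\leq -1}(\p)$, one still has $\Hom_{\DA}(\p,\tau^{\geq 0}_{\p}\c)\cong\Hom_{\DA}(\p,\c)\cong M$. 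The same obstruction undermines your d\'evissage for full faithfulness: $\p$ itself is in general \emph{not} in $\C(\p)$ (again because $\Hom_{\DA}(\p,\p[-1])$ may be nonzero), so one cannot reduce to the case $\x=\y=\p$ while remaining inside the heart.
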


\begin{proof}
Note that (a) is a classical result of
\cite{bbd}.  Proofs of (b), (c) and (d) can be found in \cite{hkm} (although there they proved these in the setting of abelian categories with arbitrary coproducts, but their proofs also work in our case, using  Theorem~\ref{thm:t-structure} (a)). We now explain how (b) and (c) can also be seen to follow from \cite[Proposition
I.2.1 and Corollary I.22]{hrs}, which says that for any torsion pair
$(\T,\F)$ in $\mod A$, we have that the two subcategories
$$\{\x\in D^b(A)\mid  H^i(\x) = 0 \text{ for $i>0$
   and } H^0(\x) \in \T  \} $$
\noindent and
$$\{\x\in D^b(A)\mid  H^i(\x) = 0 \text{ for $i<-1$
   and } H^{-1}(\x) \in \F  \} $$
\noindent form a $t$-structure, and that
$(\F[1], \T)$ is a torsion pair in the heart of this $t$-structure.

Note first that by Lemma \ref{lem:CPdot} we have that
\begin{multline*}
D^{\leq0}(\p) =
\{ \x\in D^b(A)\mid \Hom_{\DA}(\p,H^i(\x)) =0 \text{ for $i>0$ and } \\
\Hom_{\DA}(\p,H^j(\x) [1]) =0 \text{, for $j \geq 0$}\}.
\end{multline*}

Since for any module $M$, we have that $\Hom_{\DA}(\p,M) = 0 =
\Hom_{\DA}(\p,M[1])$ only if $M=0$, it follows that
\[\begin{array}{rcl}D^{\leq0}(\p) &=& \{\x\in D^b(A)\mid  H^i(\x) = 0 \text{ for $i>0$
   and } \Hom_{\DA}(\p,H^0(\x) [1]) =0 \}\\
&=&\{\x\in D^b(A)\mid  H^i(\x) = 0 \text{ for $i>0$
   and } H^0(\x) \in \T(\p)  \}. \end{array}\]

Similarly, we have that
$$D^{\geq0}(\p) = \{\x\in D^b(A)\mid  H^i(\x) = 0 \text { for $i<-1$
   and } H^{-1}(\x) \in \F(\p)  \}. $$
Hence (b) and (c) follows.

We also refer to \cite[Proposition~3.13]{iy} for a different proof of (d).
\end{proof}

As before, for a module $M$ in $\mod A$, we let $\Fac M$ denote the full subcategory whose
objects are generated by $M$,  and dually we let $\Sub M$ denote the full subcategory whose
objects are cogenerated by $M$. We then have the following, which is
also due to \cite{hkm}.

\begin{proposition}\label{prop:alt-des}
Let $\p$ be a 2-term silting complex in $K^b(\proj A)$.
Then, we have $$(\T(\p), \F(\p)) = (\Fac H^0(\p), \Sub H^{-1}(\nu(\p))).$$
\end{proposition}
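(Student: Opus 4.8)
The plan is to prove the two inclusions $\T(\p) \subseteq \Fac H^0(\p)$ and $\Fac H^0(\p) \subseteq \T(\p)$ separately, and then obtain the statement about $\F(\p)$ either by a dual argument using the Nakayama functor or, more cheaply, by invoking the already-established fact that $(\T(\p),\F(\p))$ is a torsion pair (Theorem~\ref{thm:t-structure}(b)) together with the observation that $(\Fac H^0(\p), \Sub H^{-1}(\nu\p))$ is automatically a torsion pair once one of the two classes is identified. First I would write $\p = (P^{-1} \xrightarrow{d} P^0)$ and note, via the hypercohomology / truncation spectral sequence (or directly from Lemma~\ref{lem:CPdot} with $\x$ a module $M$), that $\Hom_{\DA}(\p, M)$ is the cokernel of $\Hom_A(P^0,M)\to\Hom_A(P^{-1},M)$ and $\Hom_{\DA}(\p,M[1])$ is the kernel of the same map (after identifying $P^0$ with a projective presentation, $H^0(\p) = \coker d$). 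This gives concrete handles on the defining conditions of $\T(\p)$ and $\F(\p)$.

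For $\Fac H^0(\p)\subseteq\T(\p)$: since $\p$ is presilting, $\Hom_{\DA}(\p,\p[1])=0$, and because $H^0(\p)=\coker d$ sits in a triangle $\p \to H^0(\p) \to P^{-1}[1] \to \p[1]$ in $\DA$, applying $\Hom_{\DA}(\p,-)$ and using $\Hom_{\DA}(\p, P^{-1}[2])=0$ (as $\p$ is 2-term, so $P^{-1}[1]$ lies in the heart-shifted range) one deduces $\Hom_{\DA}(\p, H^0(\p)[1])=0$, i.e. $H^0(\p)\in\T(\p)$. Then $\T(\p)$ is closed under quotients — this follows from Theorem~\ref{thm-cp}: $\T(\p)$ is the torsion class of the torsion pair $(\F(\p)[1],\T(\p))$ in the abelian heart $\C(\p)$, hence closed under quotients taken in $\C(\p)$, and for modules these agree with quotients in $\mod A$ since $\mod A \cap \C(\p) \supseteq \T(\p)$ by Theorem~\ref{thm-cp}(c); closure under finite direct sums is clear. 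Therefore every quotient of a finite direct sum of copies of $H^0(\p)$ lies in $\T(\p)$, giving the inclusion.

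For the reverse inclusion $\T(\p)\subseteq\Fac H^0(\p)$: given $X\in\T(\p)$, I want to produce a surjection $H^0(\p)^n \twoheadrightarrow X$. The idea is to apply $\Hom_{\DA}(\p,-)$, which by Theorem~\ref{thm-cp}(d) is an equivalence $\C(\p)\xrightarrow{\sim}\mod B$; under it, $H^0(\p)$ maps to the projective $B$-module $\Hom_{\DA}(\p,\p) = B$ (or rather to $B$ up to the identification, since $\p$ is the silting complex), and $X$ maps to some $B$-module $\Hom_{\DA}(\p,X)$. A surjection $B^n \twoheadrightarrow \Hom_{\DA}(\p,X)$ in $\mod B$ pulls back, via the equivalence, to an epimorphism $H^0(\p)^n \to X$ \emph{in the abelian category $\C(\p)$}. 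The remaining point is that an epimorphism in $\C(\p)$ between objects lying in $\mod A$ (here $H^0(\p)^n$ and $X$, both in $\T(\p)\subseteq\mod A$) is an epimorphism in $\mod A$: this is because the inclusion $\mod A \hookrightarrow \C(\p)$ is... actually not exact in general, so the cleanest route is to take the triangle in $\DA$ realizing this morphism, $K \to H^0(\p)^n \to X \to K[1]$, observe its long exact cohomology sequence in $\mod A$, and use Theorem~\ref{thm-cp}(c) (cohomology vanishing outside degrees $-1,0$ for objects of $\C(\p)$, and the snake-type argument on $H^0$) to conclude $H^0(\p)^n \to X$ is onto on $H^0$, i.e. surjective as a map of modules. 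Hence $X\in\Fac H^0(\p)$.

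The main obstacle I anticipate is precisely this last bookkeeping step — translating an epimorphism in the abstract heart $\C(\p)$ into a genuine surjection of $A$-modules — since $\C(\p)$ is not a subcategory of $\mod A$ in a way compatible with cokernels. Once the description $\T(\p) = \Fac H^0(\p)$ is in hand, the description $\F(\p) = \Sub H^{-1}(\nu\p)$ follows by dualizing: $\nu$ induces an equivalence $K^b(\proj A)\to K^b(\inj A)$ and the isomorphism $\Hom_{\DA}(\x,\nu\y)\cong D\Hom_{\DA}(\y,\x)$ converts the defining condition $\Hom_{\DA}(\p,Y)=0$ of $\F(\p)$ into a condition phrased in terms of $\nu\p$ and the opposite algebra, where the already-proven half applies; alternatively, since $(\T(\p),\F(\p))$ is a torsion pair and $\F(\p)$ is determined by $\T(\p)$, one just checks $\Sub H^{-1}(\nu\p)$ is the corresponding torsion-free class, using that $H^{-1}(\nu\p)$ is $\Ext$-injective in $\F(\p)$ (via Lemma~\ref{lem:CPdot} applied to $\nu\p$, dualized).
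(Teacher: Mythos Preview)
The paper does not supply its own proof of this proposition; it is attributed to \cite{hkm}. So I evaluate your argument on its own merits.

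Two small slips first. The triangle you quote, $\p \to H^0(\p) \to P^{-1}[1]\to \p[1]$, is not correct: the smart truncation gives $H^{-1}(\p)[1]\to \p\to H^0(\p)\to H^{-1}(\p)[2]$, and applying $\Hom_{\DA}(\p,-[1])$ to this (using $\Hom_{\DA}(\p,\p[1])=0$ and $\Hom_{\DA}(\p,M[3])=0$ for any module $M$) does yield $H^0(\p)\in\T(\p)$. Second, in the heart torsion pair $(\F(\p)[1],\T(\p))$ the class $\T(\p)$ is torsion-\emph{free}, hence closed under subobjects rather than quotients; the right justification for closure under quotients is simply that $(\T(\p),\F(\p))$ is already a torsion pair in $\mod A$ by Theorem~\ref{thm:t-structure}(b).

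The substantive gap is in the reverse inclusion. Under the equivalence $\Hom_{\DA}(\p,-)\colon\C(\p)\to\mod B$, the object $H^0(\p)$ does \emph{not} correspond to $B$: by Lemma~\ref{lem:funciso} one has $\Hom_{\DA}(\p,H^0(\p))\cong\End_A(H^0(\p))$, which for a general silting complex is a proper factor of $B=\End_{\DA}(\p)$ (equality holds exactly in the tilting case, as the introduction of the paper notes). So pulling back a surjection $B^n\twoheadrightarrow\Hom_{\DA}(\p,X)$ through the equivalence does not land you on $H^0(\p)^n$; the projective generator of $\C(\p)$ is $\tau^{\geq 0}_{\p}\p$, which need not lie in $\mod A$. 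A clean repair close to your idea: for $X\in\T(\p)$ choose $f\colon\p^n\to X$ in $\DA$ with $\Hom_{\DA}(\p,f)$ surjective, and let $C$ be its cone. Then $\Hom_{\DA}(\p,C)=0$ by construction and $\Hom_{\DA}(\p,C[1])=0$ since $X\in\T(\p)$ and $\Hom_{\DA}(\p,\p[2])=0$. Lemma~\ref{lem:CPdot} (with $i=0,1$) now forces $H^0(C)\in\T(\p)\cap\F(\p)=0$, and the cohomology long exact sequence of the triangle identifies $H^0(C)$ with the cokernel of the induced map $H^0(\p)^n\to X$, giving the desired surjection $H^0(\p)^n\twoheadrightarrow X$.
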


Note that $H^0(\p)$ is the support $\tau$-tilting module corresponding to $\p$ by \cite[Theorem~3.2]{air}, so Proposition~\ref{prop:alt-des} shows that $(\T(\p),\F(\p))$ is precisely the functorially finite torsion pair associated with $H^0(\p)$ via \cite[Theorem~2.7]{air}. 

Consider now the subcategories $\X(\p)=\Hom_{\DA}(\p,\F(\p)[1])$ and $\Y(\p)=\Hom_{\DA}(\p, \T(\p))$
of $\mod B$. We have the following direct consequences of Theorem \ref{thm-cp}.

\begin{corollary}\label{cor:equiv}
Let $\p$ be a 2-term silting complex in $K^b(\proj A)$, then $(\X(\p),\Y(\p))$ is a torsion pair in $\mod B$ and there are equivalences
    $$\Hom_{\DA}(\p, -)\colon \T(\p)\rightarrow \Y(\p),$$
    and
    $$\Hom_{\DA}(\p, -[1]) \colon \F(\p)\rightarrow\X(\p).$$
The equivalences send short exact sequences with terms in $(\T(\p))$ (resp. $\F(\p)$) to short exact sequences in $\mod B$.
\end{corollary}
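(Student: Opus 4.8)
The plan is to deduce Corollary~\ref{cor:equiv} directly from Theorem~\ref{thm-cp}, using the equivalence of abelian categories $H:=\Hom_{\DA}(\p,-)\colon \C(\p)\to\mod B$ together with the torsion pair $(\F(\p)[1],\T(\p))$ in $\C(\p)$. First I would recall that an equivalence of abelian categories is exact, so it carries the torsion pair $(\F(\p)[1],\T(\p))$ in $\C(\p)$ to a torsion pair $(H(\F(\p)[1]),H(\T(\p)))$ in $\mod B$. By definition of the functor $H$ and of the subcategories, $H(\T(\p))=\Hom_{\DA}(\p,\T(\p))=\Y(\p)$, and $H(\F(\p)[1])=\Hom_{\DA}(\p,\F(\p)[1])=\X(\p)$; this is essentially a matter of unwinding notation and noting that $\T(\p)\subseteq\C(\p)$ and $\F(\p)[1]\subseteq\C(\p)$ by Theorem~\ref{thm-cp}(c). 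Hence $(\X(\p),\Y(\p))$ is a torsion pair in $\mod B$.

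Next I would extract the two equivalences. Since $H$ is an equivalence $\C(\p)\to\mod B$, its restriction to any full subcategory $\S$ of $\C(\p)$ is an equivalence of $\S$ onto the full subcategory $H(\S)$ of $\mod B$ (fully faithfulness is inherited, and essential surjectivity onto $H(\S)$ is automatic). Applying this to $\S=\T(\p)$ gives an equivalence $\Hom_{\DA}(\p,-)\colon\T(\p)\to\Y(\p)$. Applying it to $\S=\F(\p)[1]$ gives an equivalence $\Hom_{\DA}(\p,-)\colon\F(\p)[1]\to\X(\p)$; precomposing with the shift autoequivalence $[1]\colon\F(\p)\to\F(\p)[1]$ (which is an equivalence of additive categories, being the restriction of an autoequivalence of $\DA$, and lands inside $\C(\p)$ by Theorem~\ref{thm-cp}(c)) yields the equivalence $\Hom_{\DA}(\p,-[1])\colon\F(\p)\to\X(\p)$.

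For the final sentence, I would use Theorem~\ref{thm-cp}(a), which identifies the short exact sequences in $\C(\p)$ with the triangles in $\DA$ all of whose vertices lie in $\C(\p)$. A short exact sequence $0\to X'\to X\to X''\to0$ in $\mod A$ with all terms in $\T(\p)$ has all terms in $\C(\p)$ (by Theorem~\ref{thm-cp}(c), since $\T(\p)\subseteq\C(\p)$), and it is exactly a triangle $X'\to X\to X''\to X'[1]$ in $\DA$; by Theorem~\ref{thm-cp}(a) this is a short exact sequence in $\C(\p)$, and since $H$ is an exact functor of abelian categories it is sent to a short exact sequence in $\mod B$. The case of $\F(\p)$ is analogous: a short exact sequence with terms in $\F(\p)$ gives, after applying $[1]$, a triangle with all vertices in $\F(\p)[1]\subseteq\C(\p)$, hence a short exact sequence in $\C(\p)$, which $H$ sends to a short exact sequence in $\mod B$.

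There is essentially no serious obstacle here; the statement is a formal corollary. The only point requiring a moment of care — and the one I would state explicitly rather than leave implicit — is the identification of the images $H(\T(\p))$ and $H(\F(\p)[1])$ with the \emph{a priori} differently-defined subcategories $\Y(\p)$ and $\X(\p)$, i.e.\ checking that $\T(\p)$ and $\F(\p)[1]$ are genuinely subcategories of the heart $\C(\p)$ so that $H$ may be applied to them at all; this is immediate from Theorem~\ref{thm-cp}(c). Everything else is the general principle that an exact equivalence of abelian categories transports torsion pairs, subcategory-restricted equivalences, and short exact sequences.
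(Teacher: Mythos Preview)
Your argument is correct and follows essentially the same route as the paper's own proof, which simply says the corollary follows from Theorem~\ref{thm-cp}(a) and (d) together with the inclusion $\T(\p)\cup\F(\p)[1]\subset\C(\p)$. Your version is more explicit (in particular you invoke part (b) for the torsion pair and part (c) for the inclusion into the heart), but the underlying idea is identical.
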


\begin{proof}
This follows from Theorem \ref{thm-cp} (a) and (d), using that $\T(\p) \cup \F(\p)[1]
\subset \C(\p)$.
\end{proof}

In Section \ref{sec:tst} we will provide natural quasi-inverses
of these functors.

\begin{corollary}\label{cor:hom}
Let $M\in\T(\p)$ and $N\in\F(\p)$, for a 2-term silting complex $\p$.
Then we have the following functorial isomorphisms
\[\Hom_B(\Hom_\DA(\p,M),\Hom_{\DA}(\p,N[1])) \cong \Hom_{\DA}(M,N[1])\cong\Ext^1_A(M,N)\]
and
\[\Ext^1_B(\Hom_{\DA}(\p,M),\Hom_{\DA}(\p,N[1]))\cong\Hom_{\DA}(M,N[2])\cong\Ext^2_A(M,N).\]
\end{corollary}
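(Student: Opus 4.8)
Both isomorphisms on the right of each line are the standard identifications $\Hom_{\DA}(M,N[i])\cong\Ext^i_A(M,N)$, valid since $M$ and $N$ are modules; so the substance lies in the two isomorphisms on the left. The plan is to exploit that, by Theorem~\ref{thm-cp}(b), both $M$ (lying in $\T(\p)$) and $N[1]$ (lying in $\F(\p)[1]$) are objects of the heart $\C(\p)$, and that by Theorem~\ref{thm-cp}(d) the functor $\Hom_{\DA}(\p,-)$ restricts to an equivalence of abelian categories $\C(\p)\xrightarrow{\sim}\mod B$ sending $M$ to $\Hom_{\DA}(\p,M)$ and $N[1]$ to $\Hom_{\DA}(\p,N[1])$.

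For the first line: since $\Hom_{\DA}(\p,-)\colon\C(\p)\to\mod B$ is in particular fully faithful, $\Hom_B(\Hom_{\DA}(\p,M),\Hom_{\DA}(\p,N[1]))\cong\Hom_{\C(\p)}(M,N[1])$, and since the heart $\C(\p)$ is a full subcategory of $\DA$ this equals $\Hom_{\DA}(M,N[1])$. For the second line: an equivalence of abelian categories induces isomorphisms on Yoneda $\Ext$-groups in every degree, so $\Ext^1_B(\Hom_{\DA}(\p,M),\Hom_{\DA}(\p,N[1]))\cong\Ext^1_{\C(\p)}(M,N[1])$; it then remains to identify $\Ext^1_{\C(\p)}(X,Y)$ with $\Hom_{\DA}(X,Y[1])$ for objects $X,Y$ of $\C(\p)$, which applied to $X=M$, $Y=N[1]$ yields $\Ext^1_{\C(\p)}(M,N[1])\cong\Hom_{\DA}(M,N[2])$, as wanted.

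The main point is thus this last identification, the standard fact that $\Ext^1$ in the heart of a $t$-structure agrees with the degree-$1$ $\Hom$ in the ambient triangulated category; I would include a short proof. A class in $\Ext^1_{\C(\p)}(X,Y)$ is represented by a short exact sequence $0\to Y\to E\to X\to 0$ in $\C(\p)$, which by Theorem~\ref{thm-cp}(a) is a triangle $Y\to E\to X\xrightarrow{g}Y[1]$ in $\DA$; the assignment sending this class to $g$ is well defined. Conversely, given $g\colon X\to Y[1]$, complete it to a triangle $Y\to E\to X\xrightarrow{g}Y[1]$; applying $\Hom_{\DA}(\p,-)$ and using the cohomological description $\x\in\C(\p)\iff\Hom_{\DA}(\p,\x[i])=0$ for all $i\neq0$ (which follows from $\C(\p)=D^{\leq0}(\p)\cap D^{\geq0}(\p)$), the long exact sequence forces $\Hom_{\DA}(\p,E[i])=0$ for $i\neq0$, so $E\in\C(\p)$ and the triangle is a short exact sequence in $\C(\p)$; checking that the two assignments are mutually inverse is routine.

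The only point requiring care is the naturality of all these isomorphisms in $M$ and in $N$. This is obtained by tracking each identification through the functor $\Hom_{\DA}(\p,-)$ together with the naturality of the connecting morphisms in the long exact sequences; I expect no real obstacle here, only bookkeeping. (Alternatively, one could note that the heart-$\Ext^1$ identification is itself functorial and cite it, e.g.\ the discussion of $t$-structures in \cite{bbd}.)
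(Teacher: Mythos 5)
Your proposal is correct and follows essentially the same route as the paper, which deduces the first isomorphism from the equivalence $\Hom_{\DA}(\p,-)\colon\C(\p)\to\mod B$ of Theorem~\ref{thm-cp}(d) and the second from combining (d) with the identification in Theorem~\ref{thm-cp}(a) of short exact sequences in the heart with triangles having all vertices in $\C(\p)$. You have merely written out in full the standard degree-one $\Ext$ comparison that the paper leaves implicit.
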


\begin{proof}
Note that by Theorem \ref{thm-cp} (c) both $M$ and $N[1]$ are in $\C(\p)$.
Then the first isomorphism follows from Theorem \ref{thm-cp} (d), while the second
follows from (a) and (d).
\end{proof}

The following easy observation will be useful later.

\begin{lemma}\label{lem:funciso}
For any $A$-module $X$, we have a functorial isomorphism
\[\Hom_\DA(\p,X)\cong\Hom_A(H^0(\p),X)\]
and a monomorphism
$$\Hom_{\DA}(H^0(\p), X[1]) \to \Hom_{\DA}(\p, X[1]).$$
\end{lemma}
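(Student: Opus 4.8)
The plan is to use the standard projective presentation of $\p$ as a 2-term complex $P^{-1} \xrightarrow{d} P^0$ and compute the two $\Hom$-spaces directly from the associated long exact sequences obtained by applying $\Hom_{\DA}(-, X)$ and $\Hom_{\DA}(-, X[1])$ to the triangle $P^{-1} \to P^0 \to \p \to P^{-1}[1]$. For the first claim, applying $\Hom_{\DA}(-,X)$ gives an exact sequence
$$\Hom_A(P^{-1}[1], X) \to \Hom_{\DA}(\p, X) \to \Hom_A(P^0, X) \xrightarrow{d^\ast} \Hom_A(P^{-1}, X).$$
Since $P^{-1}[1]$ is a projective module shifted by one, $\Hom_{\DA}(P^{-1}[1], X) = \Hom_{\DA}(P^{-1}, X[-1]) = 0$ because $X$ is concentrated in degree $0$. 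Hence $\Hom_{\DA}(\p, X) \cong \ker d^\ast = \ker\bigl(\Hom_A(P^0,X) \to \Hom_A(P^{-1},X)\bigr)$. On the other hand, $P^{-1} \xrightarrow{d} P^0 \to H^0(\p) \to 0$ is a right exact sequence (the cokernel presentation of $H^0(\p)$), so applying $\Hom_A(-, X)$ yields the exact sequence $0 \to \Hom_A(H^0(\p), X) \to \Hom_A(P^0, X) \xrightarrow{d^\ast} \Hom_A(P^{-1}, X)$, identifying $\Hom_A(H^0(\p), X)$ with the same kernel. Chasing through, both identifications are natural in $X$, which gives the functorial isomorphism $\Hom_\DA(\p, X) \cong \Hom_A(H^0(\p), X)$.

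For the monomorphism, I would apply $\Hom_{\DA}(-, X[1])$ to the same triangle, obtaining
$$\Hom_A(P^0, X) \xrightarrow{d^\ast} \Hom_A(P^{-1}, X) \to \Hom_{\DA}(\p, X[1]) \to \Hom_{\DA}(P^0, X[1]) = 0,$$
so $\Hom_{\DA}(\p, X[1]) \cong \coker d^\ast$. Meanwhile, from the four-term exact sequence $0 \to K \to P^{-1} \xrightarrow{d} P^0 \to H^0(\p) \to 0$ (with $K = H^{-1}(\p) = \ker d$), one gets a projective presentation of $H^0(\p)$, and computing $\Ext^1_A(H^0(\p), X)$ via this presentation shows $\Ext^1_A(H^0(\p), X)$ is the cokernel of $\Hom_A(P^0,X) \to \Hom_A(\Im d, X)$, which maps to $\coker d^\ast$; more directly, $\Hom_{\DA}(H^0(\p), X[1]) = \Ext^1_A(H^0(\p), X)$ sits inside $\coker d^\ast \cong \Hom_{\DA}(\p, X[1])$ as a submodule. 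The cleanest route is to use the canonical triangle relating $\p$ and its cohomology: there is a triangle $H^0(\p) \to \p[?]$... more precisely, since $\p$ has cohomology only in degrees $-1, 0$, there is a triangle $H^{-1}(\p)[1] \to \p \to H^0(\p) \to H^{-1}(\p)[2]$ in $\DA$. Applying $\Hom_{\DA}(-, X[1])$ to this triangle gives an exact sequence
$$\Hom_{\DA}(H^{-1}(\p)[1], X[1]) \to \Hom_{\DA}(H^0(\p), X[1]) \to \Hom_{\DA}(\p, X[1]) \to \Hom_{\DA}(H^{-1}(\p)[1], X[1]).$$
Wait — the first term is $\Hom_A(H^{-1}(\p), X) $, which need not vanish, so I instead read off from exactness on the right: actually the relevant portion is $\Hom_{\DA}(H^0(\p), X[1]) \to \Hom_{\DA}(\p, X[1]) \to \Hom_{\DA}(H^{-1}(\p)[1], X[1])$, and the first map has kernel a quotient of $\Hom_{\DA}(H^{-1}(\p)[2], X[1]) = \Ext^{-1}_A(H^{-1}(\p),X) = 0$. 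Hence the map $\Hom_{\DA}(H^0(\p), X[1]) \to \Hom_{\DA}(\p, X[1])$ is injective, which is exactly the claimed monomorphism, and it is natural in $X$ since the triangle is natural in $\p$.

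The main obstacle is bookkeeping: making sure the triangle decomposing $\p$ into its cohomology is set up with the correct shifts and that the connecting maps land in the right degrees so that the flanking $\Hom$-terms genuinely vanish (this uses crucially that $\p$ is a \emph{2-term} complex, so only $H^{-1}$ and $H^0$ are nonzero, and that $X$ is a module). Once the degrees are pinned down, both statements follow from exactness of the induced sequences with no further computation; naturality in $X$ is automatic because every triangle and every identification used is functorial in $X$.
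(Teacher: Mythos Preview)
Your proposal is correct. For the monomorphism you end up using exactly the paper's argument: apply $\Hom_{\DA}(-,X)$ (or equivalently $\Hom_{\DA}(-,X[1])$) to the cohomology triangle $H^{-1}(\p)[1]\to\p\to H^0(\p)\to H^{-1}(\p)[2]$ and use vanishing of negative extensions between modules. The only difference is in the isomorphism: the paper uses that same cohomology triangle for both statements, observing that $\Hom_{\DA}(H^{-1}(\p)[1],X)=0=\Hom_{\DA}(H^{-1}(\p)[2],X)$ forces $\Hom_{\DA}(\p,X)\cong\Hom_A(H^0(\p),X)$ directly, whereas you instead use the tautological triangle $P^{-1}\to P^0\to\p\to P^{-1}[1]$ to identify $\Hom_{\DA}(\p,X)$ with $\ker d^\ast$ and then separately identify that kernel with $\Hom_A(H^0(\p),X)$ via the cokernel presentation. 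Both routes are valid; the paper's is a bit more economical since one triangle handles both claims, while yours is slightly more hands-on but avoids invoking the existence of the cohomology triangle for the first part. The meandering in the middle of your argument for the monomorphism (the aborted $\coker d^\ast$ computation) could be cut entirely.
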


\begin{proof}
Note that for any 2-term complex $\y$ in $D^b(A)$, there is a triangle
$$H^{-1}(\y)[1] \to \y \to H^0(\y) \to H^{-1}(\y)[2].$$
Now applying $\Hom_{\DA}(-,X)$ to the
triangle
\[H^{-1}(\p)[1] \to \p \to H^0(\p) \to H^{-1}(\p)[2]\]
and using that there is no non-zero negative extensions between modules, we get the required isomorphism and monomorphism.
\end{proof}

We next describe some useful properties for the torsion pair corresponding to a 2-term
silting complex. A consequence of this is that both $\T(\p)$ and
$\F(\p)$ are exact categories with enough projectives and injectives. Note that since Proposition~\ref{prop:alt-des} implies that $(\T(\p),\F(\p))$ is precisely the torsion pair associated with the support $\tau$-tilting $A$-module $H^0(\p)$, statement (1) also follows from the proof of \cite[Theorem~2.7]{air}.

\begin{proposition}\label{prop:Ext-proj}
Let $\p$ be a 2-term silting complex and $(\T(\p),\F(\p))$ be the torsion pair induced by $\p$. Then
\begin{description}
\item[(1)] for any $X\in\mod A$, we have that $X\in\add H^0(\p)$ if and only if $X$ is $\Ext$-projective in $\T(\p)$;
\item[(2)] for any $X\in\T(\p)$, there is a short exact sequence
\[0\rightarrow L\rightarrow T_0\rightarrow X\rightarrow0\]
with $T_0\in\add H^0(\p)$ and $L\in\T(\p)$;
\item[(3)] for any $X\in\mod A$,  we have that $X\in \add t\nu A$ if and only if $X$ is $\Ext$-injective in $\T(\p)$;
\item[(4)] for any $X\in\T(\p)$, there is a short exact sequence
\[0\rightarrow X\rightarrow T_0\rightarrow L\rightarrow 0\]
with $T_0\in\add t\nu A$ and $L\in\T(\p)$;
\item[(5)] for any $X\in\mod A$,  we have that $X\in\add H^{-1}(\nu \p)$ if and only if $X$ is $\Ext$-injective in $\F(\p)$;
\item[(6)] for any $X\in\F(\p)$, there is a short exact sequence
\[0\rightarrow X\rightarrow F_0\rightarrow L\rightarrow0\]
with $F_0\in\add H^{-1}(\nu \p)$ and $L\in\F(\p)$;
\item[(7)] for any $X\in\mod A$,  we have that $X\in \add A/tA$ if and only if $X$ is $\Ext$-projective in $\F(\p)$;
\item[(8)] for any $X\in\F(\p)$, there is a short exact sequence
\[0\rightarrow L\rightarrow F_0\rightarrow X\rightarrow 0\]
with $F_0\in\add A/tA$ and $L\in\F(\p)$.
\end{description}
\end{proposition}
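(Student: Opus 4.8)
\textbf{Proof plan for Proposition~\ref{prop:Ext-proj}.}
The plan is to prove (1)--(4) in detail and then obtain (5)--(8) by a duality argument. The key tool throughout is Lemma~\ref{lem:funciso}, which identifies $\Hom_\DA(\p,X)$ with $\Hom_A(H^0(\p),X)$ and gives a monomorphism $\Hom_\DA(H^0(\p),X[1])\to\Hom_\DA(\p,X[1])$, together with Proposition~\ref{prop:alt-des}, which says $\T(\p)=\Fac H^0(\p)$. For (1), I would first observe that $H^0(\p)$ lies in $\T(\p)$ (indeed $\Hom_\DA(\p,H^0(\p)[1])$ injects into $\Hom_\DA(\p,\p[1])=0$ via the triangle in the proof of Lemma~\ref{lem:funciso}), and then show $H^0(\p)$ is $\Ext$-projective in $\T(\p)$: for $X\in\T(\p)$ the group $\Ext^1_A(H^0(\p),X)=\Hom_\DA(H^0(\p),X[1])$ embeds into $\Hom_\DA(\p,X[1])=0$ by Lemma~\ref{lem:funciso}. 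Hence $\add H^0(\p)$ consists of $\Ext$-projectives. For the converse I would use the standard argument: given $X\in\T(\p)=\Fac H^0(\p)$ $\Ext$-projective, take a surjection $(H^0(\p))^n\twoheadrightarrow X$ with kernel $L$; since $\Fac H^0(\p)=\T(\p)$ is closed under extensions (it is the torsion class of a torsion pair) one checks $L\in\T(\p)$ by applying $\Hom_\DA(\p,-)$ to the triangle and using the cohomology long exact sequence, and then $\Ext$-projectivity of $X$ splits the sequence, so $X\in\add H^0(\p)$. This last step simultaneously proves (2).

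For (3) and (4) the object playing the role of the injective generator is $t\nu A$, the torsion part of $\nu A$ with respect to $(\T(\p),\F(\p))$. I would first verify $t\nu A\in\T(\p)$ trivially, and show it is $\Ext$-injective in $\T(\p)$ using the Nakayama isomorphism $\Hom_\DA(\x,\nu\y)\cong D\Hom_\DA(\y,\x)$: for $X\in\T(\p)$, $\Ext^1_A(X,\nu A)\cong\Hom_\DA(X,\nu A[1])\cong D\Hom_\DA(A[-1],X)\cong D\Hom_\DA(A,X[1])=DH^1(X)=0$ since $X$ is a module; then pass from $\nu A$ to $t\nu A$ using the canonical sequence $0\to t\nu A\to\nu A\to(\nu A)/t(\nu A)\to 0$ and the fact that $\Hom_A(X,(\nu A)/t(\nu A))$-type terms vanish on the relevant $\Ext$ because the quotient lies in $\F(\p)$ and $X\in\T(\p)$ — more precisely, applying $\Hom_A(X,-)$ to this sequence shows $\Ext^1_A(X,t\nu A)$ is a subquotient of $\Ext^1_A(X,\nu A)=0$ once one knows $\Hom_A(X,(\nu A)/t\nu A)\to\Ext^1$ behaves well, which follows since $\Ext^1_A(X,(\nu A)/t\nu A)$ embeds appropriately. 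For the converse direction and for (4), the cleanest route is to note that $\T(\p)$ is a functorially finite torsion class (Proposition~\ref{prop:alt-des} plus \cite{air}), so it has enough $\Ext$-injectives, and then identify the basic $\Ext$-injective object: one shows that $D(B)$, viewed through the equivalence $\Hom_\DA(\p,-)\colon\C(\p)\xrightarrow{\sim}\mod B$ of Theorem~\ref{thm-cp}(d), corresponds to $\nu\p$, and that the torsion part of $\nu\p$ is exactly $t\nu A\oplus H^{-1}(\nu\p)[1]$ inside $\C(\p)$; the module part $t\nu A$ then gives the $\Ext$-injectives of $\T(\p)$ and the summand $H^{-1}(\nu\p)$ gives those of $\F(\p)$, which is how (5) will come out. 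Concretely, for (4) I would take an $\Ext$-injective coresolution $0\to X\to I\to L\to 0$ with $I$ a sum of $\Ext$-injectives and then argue $I\in\add t\nu A$.

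Statements (5)--(8) I would deduce by the standard opposite-algebra/Nakayama duality: $\F(\p)$ under the duality corresponds to a torsion class over a suitable algebra, and the functor $D$ exchanges $\Ext$-projective with $\Ext$-injective. Alternatively, and perhaps more transparently, one works inside the heart $\C(\p)$: by Theorem~\ref{thm-cp}(b) $(\F(\p)[1],\T(\p))$ is a torsion pair in the abelian category $\C(\p)$, so $\F(\p)[1]$ is a torsion-free class and its $\Ext$-injectives and $\Ext$-projectives in $\C(\p)$ can be computed, then shifted back. The $\Ext$-projectives of the torsion-free class $\F(\p)$ turn out to be $\add A/tA$ (the torsion-free part of $A$ itself, using that $A$ is projective hence $\Ext$-projective everywhere and the canonical sequence), giving (7) and (8), and the $\Ext$-injectives are $\add H^{-1}(\nu\p)$, giving (5) and (6), via the same Nakayama computation as above applied to $\nu\p$ in place of $\nu A$.

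\textbf{Main obstacle.} I expect the delicate point to be the passage between the ambient module category $\mod A$ and the heart $\C(\p)$ when identifying the $\Ext$-injective/projective objects: the groups $\Ext^1$ computed in $\mod A$, in $\C(\p)$, and in $\mod B$ must all be matched up correctly (Corollary~\ref{cor:hom} handles the $M\in\T$, $N\in\F$ case but here we need $\Ext^1$ within $\T(\p)$ and within $\F(\p)$ separately), and one must be careful that $\T(\p)$ and $\F(\p)$ are only exact (not abelian) subcategories, so "enough $\Ext$-injectives" needs the functorial finiteness input from \cite{air} rather than an abstract abelian-category argument. Verifying that the torsion part of $\nu\p$ decomposes as $t\nu A\oplus H^{-1}(\nu\p)[1]$, and that these are precisely the basic $\Ext$-injectives, is where the real work lies.
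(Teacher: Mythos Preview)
Your approach to (1) and (2) matches the paper's, with one caveat: to conclude $L\in\T(\p)$ from the long exact sequence you need $\Hom_\DA(\p,T_0)\to\Hom_\DA(\p,X)$ to be surjective, and for that the map $T_0\twoheadrightarrow X$ must be chosen as a right $\add H^0(\p)$-approximation (as the paper does), not an arbitrary epimorphism from a power of $H^0(\p)$. The remark about closure under extensions is not what is used at this step.

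For (3) and (4) you take a much more elaborate route than the paper. Statement (3) is a \emph{general} fact about torsion pairs, independent of silting: for any torsion pair $(\T,\F)$ in $\mod A$, the $\Ext$-injectives of $\T$ are exactly $\add t(\nu A)$; the paper simply cites \cite{sma} and \cite[Proposition~VI.1.11]{ass}. For (4) the paper embeds $X$ into its injective envelope $I_0$, observes that the composite $X\to I_0\to I_0/tI_0$ vanishes (since $X\in\T(\p)$ and $I_0/tI_0\in\F(\p)$), hence $X\hookrightarrow tI_0$, and the cokernel lies in $\T(\p)$ because torsion classes are closed under quotients. No passage to the heart, no functorial finiteness, no comparison with $\mod B$ is needed. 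The ``main obstacle'' you flag---matching $\Ext^1$ across $\mod A$, $\C(\p)$, and $\mod B$, and decomposing the torsion part of $\nu\p$ inside $\C(\p)$---simply does not arise in the paper's argument; it is created by your detour through the heart. Statements (5)--(8) are then handled by the genuinely dual argument (projective covers and the torsion-free part of $A$ in place of injective envelopes and $t\nu A$), again entirely within $\mod A$.
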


\begin{proof}
We only prove $(1)-(4)$. The proofs of $(5)-(8)$ are similar.

By the monomorphism in Lemma \ref{lem:funciso}, we have that $\add H^0(\p)$ is Ext-projective.

Assume $M$ is Ext-projective in $\T(\p)= \Fac H^0(\p)$.
Then there is an exact sequence
\begin{equation}\tag{$\sharp$}
 0\rightarrow L\rightarrow T_0\xrightarrow{\alpha} M\rightarrow0
 \label{short}
 \end{equation}
 where $T_0\xrightarrow{\alpha} M$ is a right
 $\add H^0(\p)$-approximation.
 Since $\Hom_A(H^0(\p),\alpha)$ is an epimorphism, we have that $\Hom_\DA(\p,\alpha)$ is also an epimorphism by Lemma~\ref{lem:funciso}.
     Applying $\Hom_\DA(\p,-)$ to \eqref{short}, we have an exact sequence
$$
\Hom_\DA(\p,T_0) \xrightarrow{\Hom_\DA(\p,\alpha)}\Hom_\DA(\p,M)\rightarrow\Hom_\DA(\p,L[1])\rightarrow0.$$
Then $\Hom_\DA(\p,L[1])=0$ which implies that $L$ is in $\T(\p)$.
Then, by assumption, the sequence \eqref{short} splits, and hence
$M$ is in $\add H^0(\p)$. This proves (1).
Replacing $M$ with an arbitrary object $X$ in $\T(\p)$, we
 also obtain (2). 

For (3) cf. \cite{sma} or \cite[Proposition~VI.1.11]{ass}.

We now prove (4).
For any $X\in\T(\p)$, we have an injective envelope
  $\alpha\colon X\rightarrow I_0$ with $I_0\in\add\nu A$. Considering the canonical exact sequence of $I_0$ in $(\T(\p),\F(\p))$:
    \[\xymatrix@R=1pc{
    &&X\ar[d]^\alpha\ar@{-->}[ld]_{\alpha'}\\
    0\ar[r]&tI_0\ar[r]_\beta&I_0\ar[r]_\gamma&I_0/tI_0\ar[r]&0
    },\]
    we have that $\gamma\alpha=0$ by $X\in\T(\p)$ and $I_0/tI_0\in\F(\p)$. So
    there is a morphism $\alpha'\colon X\rightarrow tI_0$ such that $\alpha=\beta\alpha'$. Note that $\alpha'$ is injective since $\alpha$ is injective. Let $F_0=tI_0\in\add t\nu A$ and $L$ be the cokernel of $\alpha'$. Then $L$ is in $\T(\p)$, since $\T(\p)$ is closed under taking factor modules.
\end{proof}

\section{2-term silting complexes}\label{sec:theory}

The first lemma is the analog, for 2-term silting complexes, of the Bongartz completion of classical tilting modules.
It can be deduced from \cite[Theorem~2.10]{air} and was proven in \cite{a,df,ijy,w}. We state the proof in \cite{a} here for self-containedness.

\begin{lemma}\label{lem:completion}
Let $\p$ be a 2-term presilting complex in $K^b(\proj A)$. Then there exists a triangle
\[A\rightarrow \e \rightarrow \p''\rightarrow A[1]\]
with $\e$ being a 2-term complex in $K^b(\proj A)$ such that $\p\amalg  \e$ is a 2-term silting complex.
\end{lemma}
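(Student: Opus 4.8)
The plan is to prove the Bongartz-type completion (Lemma~\ref{lem:completion}) by the standard approach of taking a minimal right $\add\p$-approximation of $A[1]$ and showing that the resulting third vertex $\e$, together with $\p$, forms a $2$-term silting complex. First I would choose a triangle $A\xrightarrow{g} \e \xrightarrow{f} \p''\xrightarrow{h} A[1]$ where $h\colon \p''\to A[1]$ is a right $\add\p$-approximation, i.e. $\p''\in\add\p$ and every map $\p\to A[1]$ (hence every map from an object of $\add\p$) factors through $h$. Since $\p$ is $2$-term and $A$ is a stalk complex in degree $0$, the cone $\e$ of $h[-1]\colon \p''[-1]\to A$ sits in a triangle with two $2$-term complexes and the stalk $A$, so a quick look at the long exact cohomology sequence shows $H^i(\e)=0$ for $i\neq -1,0$; thus $\e$ is again $2$-term, which is the first thing to check.

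Next I would verify that $\p\amalg\e$ is presilting, i.e. that $\Hom_{\DA}(\p\amalg\e,(\p\amalg\e)[1])=0$. This breaks into four pieces. The vanishing of $\Hom(\p,\p[1])$ is the presilting hypothesis on $\p$. For $\Hom(\p,\e[1])$: apply $\Hom_{\DA}(\p,-)$ to the triangle $\e\to\p''\xrightarrow{f}A[1]\to\e[1]$; since $\Hom(\p,\p''[1])=0$, it suffices that $\Hom_{\DA}(\p,A[1])\to\Hom_{\DA}(\p,\e[2])$ is injective, and $\Hom(\p,\e[2])=0$ because $\e$ is $2$-term and $\p$ is $2$-term (no maps from a $2$-term complex to a $2$-term complex shifted by $2$). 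For $\Hom(\e,\p[1])$: apply $\Hom_{\DA}(-,\p[1])$ to $A\to\e\to\p''\to A[1]$; we get exactness of $\Hom(\p'',\p[1])\to\Hom(\e,\p[1])\to\Hom(A,\p[1])$, the left term vanishes by presilting of $\p$, and the right term $\Hom_{\DA}(A,\p[1])=H^1(\p)=0$ since $\p$ is $2$-term. Finally $\Hom(\e,\e[1])$: apply $\Hom_{\DA}(\e,-)$ to $\e\to\p''\to A[1]\to\e[1]$ and combine with $\Hom(\e,\p''[1])=0$ (which follows from the already-established $\Hom(\e,\p[1])=0$) — one is left needing surjectivity of $\Hom_{\DA}(\e,A[1])\to\Hom_{\DA}(\e,\e[2])$ together with $\Hom(\e,\e[2])$ vanishing, and the latter holds because both complexes are $2$-term; alternatively one uses that every map $\p''\to A[1]$, being in $\add\p$, factors through $h$, forcing the connecting map in the relevant sequence to be surjective. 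I would organize these four vanishings carefully, as this bookkeeping is the technical heart of the argument.

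The last step is to show $\thick(\p\amalg\e)=K^b(\proj A)$. This is where the approximation property of $h$ is used in an essential way — not merely that $\p''\in\add\p$, but that $h$ is a right $\add\p$-approximation is what will be needed elsewhere; for generation it suffices to observe that the triangle $A\to\e\to\p''\to A[1]$ exhibits $A$ as an extension of (a shift of) $\p''\in\add\p$ by $\e$, so $A\in\thick(\p\amalg\e)$, and since $A$ generates $K^b(\proj A)$ as a thick subcategory we are done. The main obstacle I anticipate is the $\Hom(\e,\e[1])=0$ computation: the other three vanishings reduce cleanly to the presilting property of $\p$ and the degree reasons for $2$-term complexes, but the self-extension of $\e$ requires carefully chasing two triangles at once and invoking that $h$ is an $\add\p$-approximation to kill the potentially surviving term; getting the direction of the maps and the exactness right there is the delicate point. (One should also note that the statement does not claim $\e$ is basic or that $\p\amalg\e$ is a minimal silting complex, so no further reduction is needed.)
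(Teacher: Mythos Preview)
Your overall strategy matches the paper's: take a right $\add\p$-approximation $h\colon\p''\to A[1]$, complete to a triangle, and check presilting plus thickness. The thickness argument and the vanishing of $\Hom(\e,\p[1])$ are fine. However, your argument for $\Hom_{\DA}(\p,\e[1])=0$ is incorrect, and this is exactly where the approximation property must enter (not only at $\Hom(\e,\e[1])$, as you suggest).

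Applying $\Hom_{\DA}(\p,-)$ to the rotated triangle $\e\to\p''\xrightarrow{h} A[1]\to\e[1]$ gives the exact sequence
\[
\Hom_{\DA}(\p,\p'')\xrightarrow{h_*}\Hom_{\DA}(\p,A[1])\to\Hom_{\DA}(\p,\e[1])\to\Hom_{\DA}(\p,\p''[1])=0,
\]
so $\Hom_{\DA}(\p,\e[1])$ is the cokernel of $h_*$. There is no map $\Hom_{\DA}(\p,A[1])\to\Hom_{\DA}(\p,\e[2])$ in this sequence, so your ``injectivity into a zero target'' reasoning does not apply; indeed $\Hom_{\DA}(\p,A[1])$ is typically nonzero (take $\p''=0$ to see that $\e=A$ and the vanishing fails without the approximation). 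The point is that $h$ being a right $\add\p$-approximation means precisely that $h_*$ is surjective, whence $\Hom_{\DA}(\p,\e[1])=0$. Once this is in hand, the vanishing $\Hom_{\DA}(\e,\e[1])=0$ follows cleanly by applying $\Hom_{\DA}(-,\e[1])$ to the triangle $A\to\e\to\p''\to A[1]$: the terms $\Hom_{\DA}(\p'',\e[1])$ and $\Hom_{\DA}(A,\e[1])=H^1(\e)$ both vanish, so no delicate double-triangle chase is needed. In short, the approximation property is used one step earlier than you think, and your replacement argument at that step does not go through.
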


\begin{proof}
Let $\p''\rightarrow A[1]$ be a right $\add \p$-approximation of $A[1]$. Extend it to a triangle
\begin{equation}\tag{$\ast$}
A\rightarrow \e \rightarrow \p''\rightarrow A[1].
\end{equation}
By applying the functors $\Hom_{\DA}(\p,-)$ and $\Hom_{\DA}(-,\p)$ to the triangle $(\ast)$, we have that $\Hom_{\DA}(\p,\e[i])=0$ for $i>0$ and $\Hom_{\DA}(\e,\p[i])=0$ for $i>0$. Applying $\Hom_{\DA}(-,\e)$ yields $\Hom_{\DA}(\e,\e[i])=0$ for $i>0$. Hence $\p\oplus \e$ is a 2-term presilting complex in $K^b(\proj A)$. The triangle $(\ast)$ shows that $A\in\thick (\p\oplus \e)$ and so $\thick (\p\oplus \e)=K^b(\proj A)$ which implies that $\p\oplus \e$ is a silting complex.
\end{proof}

\begin{remark}
Note that if the right $\add \p$-approximation in the triangle $(\ast)$ is minimal, then $\e$ does not contain any direct summands whose 0th cohomology is zero since $\Hom_{\DA}(A,A[1])=0$. Therefore one can deduce \cite[Theorem~2.10]{air} from this proof.
\end{remark}

We obtain the following characterization of silting complexes.

\begin{corollary}\label{cor:tri}
Let $\p$ be a 2-term presilting complex in $K^b(\proj A)$. Then the following are equivalent:
\begin{description}
  \item[(1)] $\p$ is a silting complex in $K^b(\proj A)$;
  \item[(2)] $|\p|=|A|$;
  \item[(3)] there is a triangle $\Delta_{\p}$
  \[A\s{e}\rightarrow \p' \s{f}\rightarrow \p'' \s{g}\rightarrow A[1]\]
with $\p' , \p'' \in\add \p.$
\end{description}
\end{corollary}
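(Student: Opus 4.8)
The plan is to prove the cycle of implications $(3)\Rightarrow(1)\Rightarrow(2)\Rightarrow(3)$, leaning on Lemma~\ref{lem:completion} for the last (and main) step.

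First I would establish $(3)\Rightarrow(1)$. Given the triangle $A\xrightarrow{e}\p'\xrightarrow{f}\p''\xrightarrow{g}A[1]$ with $\p',\p''\in\add\p$, the object $A$ lies in $\thick\p$ (it is the cone of $g[-1]\colon\p''[-1]\to\p'$, up to shift), hence $\thick\p\supseteq\thick A=K^b(\proj A)$; since $\p$ is presilting by hypothesis, this makes $\p$ a silting complex. Next, $(1)\Rightarrow(2)$ is the statement that a 2-term silting complex has the same number of indecomposable summands as $A$. I would deduce this from the material already in the excerpt: by Theorem~\ref{thm-cp}(d) the functor $\Hom_{\DA}(\p,-)$ is an equivalence $\C(\p)\to\mod B$, and it sends $\p$ (which lies in $\C(\p)$, being concentrated in degree $0$ with $H^0(\p)\in\T(\p)$) to the projective generator $B$ of $\mod B$; hence $|\p|=|B|$. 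On the other hand $\Phi_\p\colon A\to\overline A=\End_{D^b(B)}(\q)$ of Theorem~\ref{Main1}(d) is an algebra epimorphism, and $\q$ is in turn a 2-term silting complex over $B$ with $|\q|=|A|$ by the same argument applied to $\q$; combining $|\q|=|B|$ (the symmetric statement) with $|\p|=|B|$ gives $|\p|=|B|$, and then $|A|=|B|$ follows because $A$ and $B$ are derived equivalent (or more elementarily, because the number of simples is a derived invariant, $K_0$ being free of rank $|A|$ on the classes of indecomposable projectives). So $|\p|=|A|$. Alternatively, and more self-containedly, one can argue that $K_0(K^b(\proj A))\cong\mathbb Z^{|A|}$ and that the classes of the indecomposable summands of a silting complex form a basis, which forces $|\p|=|A|$.

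The substantive step is $(2)\Rightarrow(3)$, and here is where I expect the real work. Apply Lemma~\ref{lem:completion} to the presilting complex $\p$: there is a triangle $A\to\e\to\p''\to A[1]$ with $\e$ a 2-term complex such that $\p\amalg\e$ is a 2-term silting complex. By the already-established implication $(1)\Rightarrow(2)$ applied to $\p\amalg\e$, we have $|\p\amalg\e|=|A|$. Combined with the hypothesis $|\p|=|A|$, this forces $\add\e\subseteq\add\p$ (every indecomposable summand of $\e$ is already a summand of $\p$; one must check that no cancellation subtlety arises, i.e. that $\add(\p\amalg\e)=\add\p$ using that both $\p$ and $\p\amalg\e$ have exactly $|A|$ pairwise non-isomorphic indecomposable summands and $\add\p\subseteq\add(\p\amalg\e)$). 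In particular $\e\in\add\p$. Setting $\p'=\e$, the triangle from Lemma~\ref{lem:completion} becomes $A\xrightarrow{e}\p'\xrightarrow{f}\p''\xrightarrow{g}A[1]$ with $\p',\p''\in\add\p$, which is exactly $\Delta_\p$. This closes the cycle.

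The main obstacle is the rank/counting argument underpinning $(1)\Rightarrow(2)$ and the passage $\add(\p\amalg\e)=\add\p$: one needs that the indecomposable summands of a 2-term silting complex are pairwise non-isomorphic and linearly independent in $K_0$, so that their number is exactly $|A|$. This is standard (it is essentially the content of \cite[Theorem~2.10]{air} together with the Krull–Schmidt property of $K^b(\proj A)$), but it must be invoked carefully; everything else is formal manipulation of triangles and approximations.
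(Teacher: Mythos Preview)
Your overall scheme $(3)\Rightarrow(1)\Rightarrow(2)\Rightarrow(3)$, with Lemma~\ref{lem:completion} powering the last step, is exactly what the paper does: it cites \cite[Proposition~3.3]{air} for $(1)\Leftrightarrow(2)$ and says $(1)\Leftrightarrow(3)$ is immediate from Lemma~\ref{lem:completion}. Your $(3)\Rightarrow(1)$ and $(2)\Rightarrow(3)$ arguments are fine and match the intended ones.

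There is, however, a genuine circularity in your first argument for $(1)\Rightarrow(2)$. You invoke Theorem~\ref{Main1}(c),(d) concerning $\q$ and the epimorphism $\Phi_\p$, but in this paper those statements are proved \emph{after} Corollary~\ref{cor:tri} (the construction of $\q$ and the proof that it is silting, Proposition~\ref{prop:newsilt}, explicitly use Corollary~\ref{cor:tri}; and Proposition~\ref{prop:silt} establishing $\Phi_\p$ comes later still). So that route is not available here. There is also a small slip in the same paragraph: $\p$ is not ``concentrated in degree $0$'' in general---it is a genuine two-term complex---though your conclusion $\p\in\C(\p)$ is still correct, for the reason that $\Hom_{\DA}(\p,\p[i])=0$ for $i\neq 0$.

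Your alternative $K_0$ argument is the correct and non-circular route, and it is precisely the content of the references the paper cites (e.g.\ \cite[Proposition~3.3]{air}, \cite{ai}): the classes of the indecomposable summands of a silting object form a $\mathbb{Z}$-basis of $K_0(K^b(\proj A))\cong\mathbb{Z}^{|A|}$. You should drop the first argument and keep only this one; then your $(2)\Rightarrow(3)$ step, which re-uses $(1)\Rightarrow(2)$ for $\p\amalg\e$, goes through cleanly.
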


\begin{proof}
The equivalence between (1) and (2) is exactly \cite[Proposition~3.3]{air}, c.f. also \cite{ai,df,ijy}. The equivalence between (1) and (3) is an immediate corollary of Lemma~\ref{lem:completion}, cf. also \cite[Theorem~3.5, Proposition~3.9]{w}.
\end{proof}

\begin{remark}\label{unique-Q}
The map $f$ in the triangle $\Delta_\p$ in Corollary~\ref{cor:tri} defines a 2-term complex in
$K^b(\add \p)$, the bounded homotopy category of the additive category $\add \p$. Since $\Hom_{\DA}(\p,\p[1]) =0$, it follows that $e$ is a left $\add \p$-approximation, and that $g$ is a right $\add \p$-approximation. Moreover, the map $e$ is left minimal if and only if $g$ is right minimal. Hence the resulting 2-term complex is unique up to homotopy equivalence of complexes. Applying $\Hom_{\DA}(\p,-)$ to it, we obtain
a unique 2-term complex $\q$ in $K^b(\proj B)$
\end{remark}

The following lemmas will be useful later.
%
%\begin{lemma}\label{lem:sil}
%Let $\p$ be a 2-term silting complex in $K^b(\proj A)$. Then for a %projective $A$-module $P_0$, $P_0[1]\in\add \p$ if and only if %$\Hom_A(P_0,H^0(\p))=0$.
%\end{lemma}
%
%\begin{proof}
%It follows from \cite[Theorem~2.3]{air}.
%\end{proof}

\begin{lemma}\label{lem:funciso2}
There is a functorial isomorphism
\[\Hom_{\DA}(\p_0,\x)\cong\Hom_B(\Hom_{\DA}(\p,\p_0),\Hom_{\DA}(\p,\x))\]
sending $f$ to $\Hom_\DA(\p,f)$, for any $\p_0\in\add \p$ and $\x\in\DA$.
\end{lemma}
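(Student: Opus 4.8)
The plan is to reduce the claim to the defining (co)representability property of $\p$ inside its own additive closure. First I would observe that the functor $H := \Hom_{\DA}(\p,-) \colon \DA \to \mod B$ restricts, on the subcategory $\add\p$, to an equivalence onto $\proj B$: indeed $H(\p) = B$ as a right $B$-module, and since $\p$ is a presilting complex with $\End_\DA(\p) = B$, the functor $H$ takes direct sums to direct sums and is fully faithful on $\add\p$ by the standard argument (a summand of $\p^n$ is cut out by an idempotent, and idempotents are transported bijectively by $H$). Hence $\Hom_{\DA}(\p_0,\p_1) \cong \Hom_B(H\p_0,H\p_1)$ for $\p_0,\p_1 \in \add\p$, naturally and sending $f \mapsto Hf$; this is the case $\x \in \add\p$ of the lemma.

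The key step is to promote this from $\x \in \add\p$ to arbitrary $\x \in \DA$. Here I would use that $B$ is a projective right $B$-module and that $H\p_0$ is a \emph{finitely generated projective} right $B$-module. Write $H\p_0$ as a summand of $B^n$, say with splitting idempotent $\epsilon \in M_n(B) = \End_B(B^n)$; by the $\add\p$-case, $\epsilon$ is the image under $H$ of a splitting idempotent $\tilde\epsilon \in \End_\DA(\p^n)$, so $\p_0$ is a summand of $\p^n$ cut out by $\tilde\epsilon$. Now both sides of the asserted isomorphism, viewed as functors of $\p_0 \in \add\p$, are additive and send the idempotent $\tilde\epsilon$ to corresponding idempotents ($H(-)$-naturally), so it suffices to check the isomorphism for $\p_0 = \p$ itself, i.e.\ to show
\[\Hom_{\DA}(\p,\x) \cong \Hom_B\bigl(\Hom_{\DA}(\p,\p),\Hom_{\DA}(\p,\x)\bigr) = \Hom_B(B, H\x),\]
which is just the canonical evaluation-at-$1$ isomorphism $\Hom_B(B,N) \cong N$, natural in $N = H\x$ and hence functorial in $\x$. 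Tracing through, the composite isomorphism sends $f \in \Hom_{\DA}(\p_0,\x)$ to the $B$-linear map $Hf$, as required; this uses that the identification of $\p_0$ as a summand of $\p^n$ is compatible on both sides, which is automatic from the naturality already established.

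The main obstacle I anticipate is purely bookkeeping: making sure the two displayed functors of $\p_0$ really are $H$-naturally isomorphic, so that "check on $\p$ and transport by $\tilde\epsilon$" is legitimate, and then verifying that the composite sends $f$ to $Hf$ rather than to some twist. Concretely, the right-hand side $\p_0 \mapsto \Hom_B(H\p_0, H\x)$ is \emph{contravariant} in $H\p_0$ but the left side is contravariant in $\p_0$, so the variances match; one must just be careful that the idempotent $\tilde\epsilon$ acting by precomposition on $\Hom_{\DA}(\p^n,\x)$ corresponds to $\epsilon = H\tilde\epsilon$ acting by precomposition on $\Hom_B(B^n, H\x)$, which again follows from the $\add\p$-case applied to the morphism $\tilde\epsilon$. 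Once this compatibility is in hand, the lemma follows formally; no homological input beyond the presilting hypothesis $\Hom_\DA(\p,\p[i]) = 0$ for $i>0$ (used only to know $B = \End_\DA(\p)$ is the relevant endomorphism ring and that $H$ is well-behaved on $\add\p$) is needed.
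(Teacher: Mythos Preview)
Your proposal is correct and follows essentially the same approach as the paper: reduce to the case $\p_0=\p$ by additivity (you spell this out via idempotents, the paper just says ``additivity of the functors''), and there use the canonical isomorphism $\Hom_B(B,N)\cong N$. The extra bookkeeping you supply about idempotents and naturality is fine but not strictly necessary, and note that the presilting hypothesis is in fact not used at all for this lemma---only the definition $B=\End_{\DA}(\p)$ is needed.
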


\begin{proof}
This follows from the additivity of the functors and from the fact that the defined map is an isomorphism when $\p_0=\p$.
\end{proof}

%\begin{lemma}\label{lem:card}
%Let $P_1,\cdots,P_n$ be the indecomposable non-isomorphic projective %$A$-modules. Then the modules $P_i/tP_i$'s with $P_i\notin\add \p$ are the %non-isomorphic indecomposable direct summands of $A/tA$. In particular, we %have $\left| A/tA \right|=\left|A\right|-\left|\{P_i\mid P_i\in\add \p\}
%\right|$.
%\end{lemma}
%
%\begin{proof}
%We claim that $P_i/tP_i\cong0$ if and only if $P_i\in\add \p$. Assume that %$P_i/tP_i=0$. This implies $P_i\in\T$, and hence $P_i$ is Ext-projective in %$\T$. So $P_i\in\add H^{(\p)}$ by Proposition~\ref{prop:Ext-proj} (1), and %hence $P_i\in\add \p$. Assume that $P_i\in\add \p$. Then $P_i\in\add %H^0(\p)\subset\T$ and hence $P_i/tP_i=0$. Thus we complete the proof of the %claim. Finally, if $P_i/tP_i\not\cong 0$, then it is indecomposable since %the indecomposable projective module $P_i$ is its projective cover.
%\end{proof}

\begin{lemma}\label{lem:iso}
For each $A$-module $X$, there are isomorphisms
\[\Hom_\DA(\p,X)\cong\Hom_\DA(\p,tX)\]%\cong\Hom_A(H^0(\p),tX)\]
and
\[\Hom_\DA(\p,X[1])\cong\Hom_\DA(\p,X/tX[1])\]%\cong\Hom_A(H^{-1}(\p),X/tX)\]
as $B$-modules, where $0\to tX\to X\to X/tX\to 0$ is the canonical sequence of $X$ with respect to the torsion pair $(\T(\p),\F(\p))$. 
In particular, $\Hom_\DA(\p,X)$ is in $\Y(\p)$ and $\Hom_\DA(\p,X[1])$ is in $\X(\p)$ for any $X$ in $\mod A$.
\end{lemma}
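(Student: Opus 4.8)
The plan is to apply $\Hom_{\DA}(\p,-)$ to the canonical triangle coming from the torsion pair $(\T(\p),\F(\p))$ and read off the two isomorphisms from the resulting long exact sequence, using the vanishing built into the definitions of $\T(\p)$ and $\F(\p)$. First I would recall that the canonical short exact sequence $0\to tX\to X\to X/tX\to 0$ in $\mod A$ gives a triangle $tX\to X\to X/tX\to tX[1]$ in $\DA$, with $tX\in\T(\p)$ and $X/tX\in\F(\p)$. Applying the cohomological functor $\Hom_{\DA}(\p,-)$ yields a long exact sequence
\[\cdots\to\Hom_{\DA}(\p,(X/tX)[-1])\to\Hom_{\DA}(\p,tX)\to\Hom_{\DA}(\p,X)\to\Hom_{\DA}(\p,X/tX)\to\Hom_{\DA}(\p,tX[1])\to\Hom_{\DA}(\p,X[1])\to\Hom_{\DA}(\p,(X/tX)[1])\to\cdots\]
of $B$-modules (functoriality and the $B$-module structure are automatic since $B=\End_{\DA}(\p)$ acts on every term).

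For the first isomorphism, note $\Hom_{\DA}(\p,(X/tX)[-1])=0$ because $\p$ is a $2$-term complex concentrated in degrees $-1,0$ and $X/tX$ is a module, so there are no maps into its $(-1)$-shift; and $\Hom_{\DA}(\p,X/tX)=0$ since $X/tX\in\F(\p)=\{Y\mid\Hom_{\DA}(\p,Y)=0\}$. Hence the segment $0\to\Hom_{\DA}(\p,tX)\to\Hom_{\DA}(\p,X)\to 0$ is exact, giving $\Hom_{\DA}(\p,X)\cong\Hom_{\DA}(\p,tX)$. For the second isomorphism, I use that $\Hom_{\DA}(\p,tX[1])=0$ since $tX\in\T(\p)=\{Z\mid\Hom_{\DA}(\p,Z[1])=0\}$, and that $\Hom_{\DA}(\p,(X/tX)[2])=0$ again for degree reasons (a $2$-term complex admits no nonzero map into the $2$-shift of a module). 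Together with the vanishing $\Hom_{\DA}(\p,X/tX)=0$ already used, the relevant segment of the long exact sequence collapses to $0\to\Hom_{\DA}(\p,X[1])\to\Hom_{\DA}(\p,(X/tX)[1])\to 0$, giving $\Hom_{\DA}(\p,X[1])\cong\Hom_{\DA}(\p,X/tX[1])$.

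Finally, the "in particular" statement is immediate: by the first isomorphism $\Hom_{\DA}(\p,X)\cong\Hom_{\DA}(\p,tX)$ with $tX\in\T(\p)$, so it lies in $\Hom_{\DA}(\p,\T(\p))=\Y(\p)$; by the second, $\Hom_{\DA}(\p,X[1])\cong\Hom_{\DA}(\p,(X/tX)[1])$ with $X/tX\in\F(\p)$, so it lies in $\Hom_{\DA}(\p,\F(\p)[1])=\X(\p)$. I do not expect any real obstacle here; the only point requiring a moment's care is the vanishing of the negative- and positive-shift Hom groups $\Hom_{\DA}(\p,M[-1])$ and $\Hom_{\DA}(\p,M[2])$ for a module $M$, which follows since $\p\in K^b(\proj A)$ is $2$-term (equivalently, one may invoke Lemma~\ref{lem:CPdot} with $\x=M[-1]$ and $\x=M[2]$, whose outer terms $H^{i-1}(\x)$, $H^i(\x)$ then vanish). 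Functoriality of both isomorphisms in $X$ follows from the functoriality of the canonical sequence and of $\Hom_{\DA}(\p,-)$.
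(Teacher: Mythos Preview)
Your argument is correct and follows essentially the same route as the paper: apply $\Hom_{\DA}(\p,-)$ to the canonical sequence and use the vanishing $\Hom_{\DA}(\p,X/tX)=0$ and $\Hom_{\DA}(\p,tX[1])=0$ to isolate the two isomorphisms. One small slip: the term following $\Hom_{\DA}(\p,(X/tX)[1])$ in the long exact sequence is $\Hom_{\DA}(\p,tX[2])$, not $\Hom_{\DA}(\p,(X/tX)[2])$; the vanishing you need (and your degree argument) still applies verbatim to this term, so the conclusion is unaffected.
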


\begin{proof}
Applying $\Hom_\DA(\p,-)$ to the canonical exact sequence of $X$ in \sloppy the torsion pair $(\T(\p),\F(\p))$, we have a long exact sequence
\[\begin{array}{ccccccccc}
0&\rightarrow&\Hom_\DA(\p,tX)&\rightarrow&\Hom_\DA(\p,X)&\rightarrow&\Hom_\DA(\p,X/tX)&&\\
&\rightarrow&\Hom_\DA(\p,tX[1])&\rightarrow&\Hom_\DA(\p,X[1])&\rightarrow&\Hom_\DA(\p,X/tX[1])&\rightarrow&0.
\end{array}\]
Note that $\Hom_\DA(\p,X/tX)=0$ by $X/tX\in\F(\p)$ and $\Hom_\DA(\p,tX[1])=0$ by $tX\in\T(\p)$. Thus we get the desired isomorphisms.
\end{proof}

\begin{lemma}\label{lem:zero}
For any 2-term complex $\x: X^{-1}\xrightarrow{x} X^0$ in $K^b(\proj A)$, if $H^0(\x)\cong 0\cong H^{-1}(\nu\x)$, then $\x\cong 0$.
\end{lemma}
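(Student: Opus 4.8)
The plan is to show that the complex $\x\colon X^{-1}\xrightarrow{x}X^0$ in $K^b(\proj A)$ must be zero in the homotopy category by analyzing the map $x$ directly. First I would observe that $H^0(\x)=\coker x$ and $H^{-1}(\x)=\ker x$, and similarly for $\nu\x$. The hypothesis $H^0(\x)=0$ says precisely that $x\colon X^{-1}\to X^0$ is surjective as a map of $A$-modules. Since $X^0$ is projective, the short exact sequence $0\to\ker x\to X^{-1}\to X^0\to 0$ splits, so $X^{-1}\cong X^0\oplus\ker x$ and $\ker x=H^{-1}(\x)$ is a projective $A$-module. In $K^b(\proj A)$ the split summand $X^0\xrightarrow{\id}X^0$ is null-homotopic, so $\x$ is homotopy equivalent to the stalk complex $\ker x$ sitting in degree $-1$; that is, $\x\cong H^{-1}(\x)[1]$ in $K^b(\proj A)$.

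Next I would apply the same reasoning to $\nu\x$, using that $\nu\colon K^b(\proj A)\to K^b(\inj A)$ is an equivalence and hence commutes with shifts and preserves (split) exact triangles, so $\nu\x\cong \nu(H^{-1}(\x))[1]$ and therefore $H^{-1}(\nu\x)\cong\nu(H^{-1}(\x))$. By hypothesis this is zero, and since $\nu$ is an equivalence from $\proj A$ to $\inj A$ it is faithful on $\proj A$, so $H^{-1}(\x)\cong 0$ as an $A$-module. Combining with the previous paragraph gives $\x\cong H^{-1}(\x)[1]\cong 0$ in $K^b(\proj A)$, as required.

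There is essentially no hard step here; the only point requiring a little care is that ``$H^0(\x)=0$'' on the level of cohomology means surjectivity of the chain map $x$ (not just surjectivity up to homotopy), which is legitimate because $\x$ is literally represented by the two-term complex $X^{-1}\xrightarrow{x}X^0$ of projectives, and cohomology is computed from this representative. One could alternatively phrase the argument via the triangle $H^{-1}(\x)[1]\to\x\to H^0(\x)\to H^{-1}(\x)[2]$ used in the proof of Lemma~\ref{lem:funciso}: with $H^0(\x)=0$ this triangle forces $\x\cong H^{-1}(\x)[1]$ immediately, and then applying $\nu$ and using $H^{-1}(\nu\x)=0$ together with faithfulness of $\nu$ on $\proj A$ finishes the proof. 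Either route avoids any serious computation.
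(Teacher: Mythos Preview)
Your proof is correct. The paper's argument is slightly different in structure, though it uses the same ingredients: rather than first splitting off the identity summand to reduce $\x$ to the stalk complex $H^{-1}(\x)[1]$ and then killing the stalk via $\nu$, the paper argues directly that $x$ itself is an isomorphism. From $H^0(\x)=0$ the paper concludes that $x$ is a split epimorphism (since $X^0$ is projective); from $H^{-1}(\nu\x)=0$ it concludes that $\nu x$ is a split monomorphism (since $\nu X^{-1}$ is injective), and hence so is $x$ because $\nu$ is an equivalence $\proj A\to\inj A$. A map that is both a split monomorphism and a split epimorphism is an isomorphism, so $\x\cong 0$. Your version has the mild advantage of making explicit the identification $\x\cong H^{-1}(\x)[1]$, which connects nicely with the triangle in Lemma~\ref{lem:funciso}; the paper's version is a touch shorter and more symmetric in its use of the two hypotheses.
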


\begin{proof}
On the one hand, $H^0(\x)\cong 0$ implies that $x$ is an epimorphism, so $x$ is a retraction. On the other hand, $H^{-1}(\nu\x)\cong0$ implies that $\nu x$ is a
monomorphism, so $\nu x$ is a section. Since $\nu$ is an equivalence from $\proj A$ to $\inj A$, we have that $x$ is an isomorphism. Hence $\x\cong 0$.
\end{proof}

Recall from Remark \ref{unique-Q} that $\p$ determines a unique (up to isomorphism)
2-term complex $\q$ in $K^b(\proj B)$ given by
\[\Hom_\DA(\p,\p')\xrightarrow{\Hom_\DA(\p,f)}\Hom_\DA(\p,\p''),\]
where $f$ is the map from the triangle $\Delta_{\p}$.

\begin{proposition}\label{prop:newsilt}
Let $\p$ be a 2-term silting complex. Then the complex $\q$ defined above
is a 2-term silting complex in $K^b(\proj B)$. Moreover, $\T(\q)=\X(\p)$ and $\F(\q)=\Y(\p)$.
\end{proposition}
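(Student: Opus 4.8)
The plan is to produce the triangle $\Delta_{\p}$ as in Corollary~\ref{cor:tri}, apply $\Hom_{\DA}(\p,-)$ to get $\q$, and then verify the two parts separately: first that $\q$ is a 2-term silting complex in $K^b(\proj B)$, and second that $\T(\q)=\X(\p)$ and $\F(\q)=\Y(\p)$.

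For the silting statement I would proceed as follows. By Corollary~\ref{cor:tri} we have a triangle $A\xrightarrow{e}\p'\xrightarrow{f}\p''\xrightarrow{g}A[1]$ with $\p',\p''\in\add\p$, and by Remark~\ref{unique-Q} the map $\Hom_{\DA}(\p,f)\colon\Hom_{\DA}(\p,\p')\to\Hom_{\DA}(\p,\p'')$ is a morphism of finitely generated projective $B$-modules, so $\q$ is a bona fide 2-term complex in $K^b(\proj B)$. To see $\q$ is presilting, i.e. $\Hom_{K^b(\proj B)}(\q,\q[1])=0$: since $K^b(\proj B)$ embeds in $D^b(B)$ and $\Hom_{\DA}(\p,-)$ identifies $\add\p$ with $\proj B$, I can compute $\Hom$-spaces between $\q$ and $\q[1]$ via the triangle $\Hom_{\DA}(\p,A)\to\Hom_{\DA}(\p,\p')\to\Hom_{\DA}(\p,\p'')\to\Hom_{\DA}(\p,A[1])$ obtained by applying $\Hom_{\DA}(\p,-)$ to $\Delta_{\p}$; this exhibits $\q$ as the cone of $\Hom_{\DA}(\p,e)$ shifted, and $\Hom_{K^b(\proj B)}(\q,\q[1])$ can be read off from $\Hom_{D^b(B)}(\q,\q[1])$. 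The vanishing should follow by a diagram chase using $\Hom_{\DA}(\p,\p[1])=0$ together with Lemma~\ref{lem:funciso2}, which converts $\Hom$-spaces out of objects of $\add\p$ into $\Hom$-spaces of $B$-modules. For the generation condition $\thick\q=K^b(\proj B)$: applying $\Hom_{\DA}(\p,-)$ to $\Delta_{\p}$ shows $B=\Hom_{\DA}(\p,\p)$ sits in a triangle built from terms of $\add\q$ together with a copy of $\Hom_{\DA}(\p,A)$, but the latter is $\Hom_A(H^0(\p),A)$ by Lemma~\ref{lem:funciso} — I would instead argue more cleanly via Corollary~\ref{cor:tri}(2), showing $|\q|=|B|$: the number of indecomposable summands of $\q$ equals the number of indecomposable summands of $\p$ (the two-term complex over $\add\p$ coming from $f$ has no summand in $\add\p$ concentrated in a single degree, by minimality as in the Remark after Lemma~\ref{lem:completion}), which equals $|A|$ since $\p$ is silting, which equals $|B|$ because $B=\End_{\DA}(\p)$ is Morita-equivalent data to $\add\p$. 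Alternatively, one simply invokes the deduction from \cite{by} sketched after Theorem~\ref{Main1}; but the self-contained route via $|\q|=|B|$ is preferable.

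For the identification $\T(\q)=\X(\p)$ and $\F(\q)=\Y(\p)$: recall $\T(\q)=\{N\in\mod B\mid\Hom_{D^b(B)}(\q,N[1])=0\}$ and $\F(\q)=\{N\in\mod B\mid\Hom_{D^b(B)}(\q,N)=0\}$. Using the triangle defining $\q$ and Lemma~\ref{lem:funciso2}, for any $B$-module $N$ I would rewrite $\Hom_{D^b(B)}(\q,N[i])$ in terms of $\Hom_{D^b(B)}(\Hom_{\DA}(\p,\p'),N[i])$ and $\Hom_{D^b(B)}(\Hom_{\DA}(\p,\p''),N[i])$, i.e. in terms of evaluating $N$ on the projectives $\Hom_{\DA}(\p,\p'),\Hom_{\DA}(\p,\p'')$. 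Since every $B$-module is of the form $\Hom_{\DA}(\p,\x)$ for a unique $\x\in\C(\p)$ by Theorem~\ref{thm-cp}(d), write $N=\Hom_{\DA}(\p,\x)$; then the relevant $\Hom$- and $\Ext$-groups over $B$ translate, again by Lemma~\ref{lem:funciso2} and Theorem~\ref{thm-cp}(a),(d), into $\Hom_{\DA}(\p',\x)$, $\Hom_{\DA}(\p'',\x)$ and their connecting maps, hence — via the triangle $\Delta_{\p}$ and the long exact sequence obtained by applying $\Hom_{\DA}(-,\x)$ to it — into $\Hom_{\DA}(A,\x[i])=H^i(\x)$. Running this for $i=1$ gives $N\in\T(\q)\iff H^1(\x)$-type condition, which by Theorem~\ref{thm-cp}(c) forces $\x\in\F(\p)[1]$, i.e. $N=\Hom_{\DA}(\p,\x)\in\Hom_{\DA}(\p,\F(\p)[1])=\X(\p)$; and running it for $i=0$ identifies $\F(\q)$ with $\Hom_{\DA}(\p,\T(\p))=\Y(\p)$. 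Both inclusions should come out of the same computation, so no separate converse argument is needed beyond bookkeeping; the torsion pair $(\X(\p),\Y(\p))$ from Corollary~\ref{cor:equiv} then matches $(\T(\q),\F(\q))$ on the nose.

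The main obstacle I anticipate is the careful translation of $\Hom$- and $\Ext$-groups over $B$ (i.e. in $K^b(\proj B)$ or $D^b(B)$) back into $\Hom$-groups in $\DA$ between objects of $\add\p$ and objects of $\DA$: one must be sure that applying $\Hom_{\DA}(\p,-)$ is exact and fully faithful in the precise sense needed (Lemma~\ref{lem:funciso2} gives the key bifunctorial isomorphism, but only with the $\add\p$-object in the first slot and an arbitrary complex in the second), and that the long exact sequences from $\Delta_{\p}$ line up with the triangle defining $\q$ under this translation. A secondary subtlety is checking that no degenerate summand appears in $\q$ when passing from $f$ to the two-term complex over $\add\p$, which is what makes $|\q|=|B|$ rather than $|\q|<|B|$; this is handled by the minimality remark following Lemma~\ref{lem:completion} together with $\Hom_{\DA}(A,A[1])=0$.
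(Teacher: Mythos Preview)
There is a genuine gap in your count $|\q|=|B|$. You assert that the number of indecomposable summands of $\q$ equals $|\p|$, but minimality of the approximations in $\Delta_{\p}$ gives no such conclusion; it controls redundancy in $\p'$ and $\p''$, not the number of indecomposable summands of the two-term complex $\p'\xrightarrow{f}\p''$ in $K^b(\add\p)$. What is actually needed is to decompose $A=\bigoplus_{i=1}^{n} P_i$ into indecomposable projectives, obtain corresponding summands $\q_i$ of $\q$ from triangles $P_i\to\p'_i\to\p''_i\to P_i[1]$, and then show that each $\q_i\neq 0$ and that for $i\neq j$ the complexes $\q_i$ and $\q_j$ share no common direct summand. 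The paper does this by computing $H^0(\q_i)\cong\Hom_{\DA}(\p,P_i/tP_i[1])$ and $H^{-1}(\nu\q_i)\cong\Hom_{\DA}(\p,t\nu P_i)$ and then invoking the equivalences of Corollary~\ref{cor:equiv} together with Lemma~\ref{lem:zero}: the $P_i/tP_i$ (resp.\ the $t\nu P_i$) are pairwise without common summands because the $P_i$ (resp.\ the $\nu P_i$) are their projective covers (resp.\ injective envelopes), and they cannot both vanish unless both $P_i$ and $P_i[1]$ lie in $\add\p$. This is a substantive argument that your sketch does not supply.

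Your approach to $\T(\q)=\X(\p)$ and $\F(\q)=\Y(\p)$, on the other hand, is different from the paper's and does work. The paper argues via Proposition~\ref{prop:alt-des}: it identifies $H^0(\q)\cong\Hom_{\DA}(\p,A/tA[1])$ and then shows $\Fac\Hom_{\DA}(\p,A/tA[1])=\X(\p)$ using Proposition~\ref{prop:Ext-proj}(8). Your route---write $N=\Hom_{\DA}(\p,\x)$ for $\x\in\C(\p)$, use Lemma~\ref{lem:funciso2} to identify $\Hom_{D^b(B)}(\q,N)$ and $\Hom_{D^b(B)}(\q,N[1])$ with the kernel and cokernel of $\Hom_{\DA}(f,\x)$, and then read these off from the long exact sequence of $\Hom_{\DA}(-,\x)$ applied to $\Delta_{\p}$ as $H^{-1}(\x)$ and $H^0(\x)$ respectively---is more direct and avoids Proposition~\ref{prop:Ext-proj} altogether. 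One correction: what you called the ``$H^1(\x)$-type condition'' for $N\in\T(\q)$ is really $H^0(\x)=0$; since $\x\in D^{\leq 0}(\p)$ forces $\Hom_{\DA}(\p',\x[1])=\Hom_{\DA}(\p'',\x[1])=0$, the cokernel of $\Hom_{\DA}(f,\x)$ is exactly $\Hom_{\DA}(A,\x)=H^0(\x)$.
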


\begin{proof}
Let $P_1,\cdots,P_n$ be a complete collection of indecomposable, pairwise non-isomorphic projective $A$-modules. Since the map $e$ from the triangle $\Delta_{\p}$ is a left $\add\p$-approximation, there are triangles
\[P_i\xrightarrow{e_i}\p'_i\xrightarrow{f_i}\p''_i\xrightarrow{g_i}P_i[1]\]
such that the direct sum of these triangles is a direct summand of $\Delta_{\p}$. Let $\q_i$ be the 2-term complex in $K^b(\proj B)$ given by
\[\Hom_\DA(\p,\p_i')\xrightarrow{\Hom_\DA(\p,f_i)}\Hom_\DA(\p,\p_i''),\]
for each $1\leq i\leq n$. Then $\oplus_{i=1}^n\q_i$ is isomorphic to a direct summand of $\q$. We claim that $\q_1,\cdots,\q_n$ are nonzero and each two of them have no common direct summands. Indeed, by Lemma~\ref{lem:iso}, for each $1\leq i\leq n$,
\[
H^0(\q_i)= \coker\Hom_\DA(\p,f_i)\cong\Hom_\DA(\p,P_i[1])\cong\Hom_\DA(\p,P_i/tP_i [1])
\]
and
\[H^{-1}(\nu\q_i)=\ker\nu\Hom_\DA(\p,f_i)\s{(*)}{\cong}\ker\Hom_\DA(\p,\nu f_i)\cong\Hom_\DA(\p,\nu P_i)\cong\Hom_\DA(\p,t\nu P_i)\]
where
%$\ker\nu\Hom_\DA(\p,f_i)\cong\ker\Hom_\DA(\p,\nu f_i)$
($*$) holds because $\Hom_\DA(\p,\nu \p)\cong D\Hom_\DA(\p,\p)$ is an injective generator of $\mod B$. If $\q_i\cong 0$ for some $i$, both $H^0(\q_i)$ and $H^{-1}(\nu\q_i)$ are isomorphic to zero. Note that $P_i/tP_i\in\F(\p)$ and $t\nu P_i\in\T(\p)$. Then by Corollary~\ref{cor:equiv}, we have $P_i/tP_i\cong 0\cong t\nu P_i$, where the first isomorphism implies that $P_i\in\add\p$, and the second implies that $P_i[1]\in\add\p$. This is a contradiction. Hence $\q_i\ncong 0$.
Note that $P_i$ is a projective cover of $P_i/tP_i$ (if $P_i/tP_i\neq 0$) and $\nu P_i$ is an injective envelope of $t\nu P_i$ (if $t\nu P_i\neq 0$). So by Corollary~\ref{cor:equiv}, for any $i\neq j$, $H^0(\q_i)$ and $H^0(\q_j)$ have no common direct summands, and $H^{-1}(\nu\q_i)$ and $H^{-1}(\nu\q_j)$ have no common direct summands. If $\q_i$ and $\q_j$ have a common direct summand $\x$, then $H^0(\x)\cong0\cong H^{-1}(\nu\x)$. By Lemma~\ref{lem:zero}, $\x\cong0$. We finish the proof of the claim. Therefore, $|\q|\geq|A|$.

To prove that $\q$ is silting, it is by Corollary~\ref{cor:tri}, sufficient to prove that $\q$ is presilting. Let $\alpha$ be a morphism in $\Hom_{K^b(\proj B)}(\q,\q[1])$, then it has the following form
\[\xymatrix@R=1pc{
&&\Hom_\DA(\p,\p')\ar[rr]^{\Hom_\DA(\p,f)}\ar[d]^\alpha&&\Hom_\DA(\p,\p'')\\
\Hom_\DA(\p,\p')\ar[rr]^{\Hom_\DA(\p,f)}&&\Hom_\DA(\p,\p'')
}\]
By Lemma~\ref{lem:funciso2}, there is a morphism $h \colon \p'\rightarrow \p''$ such that $\alpha=\Hom_\DA(\p,h)$. Since \sloppy $\Hom_\DA(A,A[1])=0$, there are morphisms $h_1,h_2$ such that the following commutative diagram is a morphism of triangles :
\[\xymatrix{
A\ar[r]^{e}\ar[d]_{h_1}& \p'\ar[r]^{f}\ar[d]_h& \p''\ar[d]_{h_2}\ar[r]^g&A[1]\ar[d]^{h_1[1]}\\
\p'\ar[r]^{f}&\p''\ar[r]^{g}&A[1]\ar[r]^{-e[1]}&\p'[1]
}\]
By Remark \ref{unique-Q}, the morphism $g$ is a right $\add \p$-approximation of $A[1]$. So there is a morphism $h_3$ such that $h_2=gh_3$. Then $g(h-h_3f)=gh-gh_3f=gh-h_2f=0$. Hence there is a morphism $h_4$ such that $h-h_3f=fh_4$.
\[\xymatrix{
A\ar[r]^{e}\ar[d]_{h_1}& \p'\ar[r]^{f}\ar[d]_h \ar@{-->}[dl]_{h_4} & \p''\ar[d]_{h_2} \ar@{-->}[dl]_{h_3}\ar[r]^g&A[1]\ar[d]^{h_1[1]}\\
\p'\ar[r]^{f}&\p''\ar[r]^{g}&A[1]\ar[r]^{-e[1]}&\p'[1]
}\]
Applying $\Hom_\DA(\p,-)$ to $h-h_3f=fh_4$ yields
\[\alpha=\Hom_\DA(\p, h_3)\Hom_\DA(\p, f)+\Hom_\DA(\p, f)\Hom_\DA(\p, h_4)\]
which implies that $\alpha$, regarded as a map in
$\Hom_\DA(\q,\q[1])$
is null-homotopic. Thus, we have completed the proof that $\q$ is a 2-term silting complex.

Finally we prove that $\T(\q)=\X(\p)$. The proof of $\F(\q)=\Y(\p)$ is similar. Since we have proven that $H^0(\q)\cong\Hom_\DA(\p,A/tA[1])$, by Proposition \ref{prop:alt-des}, it is therefore
sufficient to prove that $\Fac\Hom_\DA(\p,A/tA[1])=\X(\p).$

Let $X$ be in $\X(\p)$. There is then an object $X'$ in $\F(\p)$, such
that $X = \Hom_{\DA}(P, X'[1])$.
By Proposition~\ref{prop:Ext-proj} (8), there is a short exact
sequence
$$0 \to L \to F_0 \to X' \to 0$$
 in $\F(\p)$, with $F_0$ in $\add A/tA$. Then there is an induced triangle $L\to F_0\to X'\to L[1]$ in $D^b(A)$. Apply now $\Hom_\DA(\p,-[1])$ to this triangle, to obtain a short exact sequence in $\mod B$ showing that $X$ is in  $\Fac\Hom_\DA(\p,A/tA[1])$, \sloppy so we have $\X(\p)\subset\Fac\Hom_\DA(\p,A/tA)$. On the other hand, $A/tA\in\F(\p)$ implies $\Hom_\DA(\p,A/tA[1])\in\X(\p)$, hence $\Fac\Hom_\DA(\p,A/tA[1])\subset \X(\p)$, since $\X(\p)$ is closed under factor objects. This concludes the proof.
\end{proof}

\begin{corollary}
The induced torsion pair $(\X(\p),\Y(\p))$ by $\p$ in $\mod B$ is functorially finite.
\end{corollary}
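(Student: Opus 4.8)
The plan is to reduce the statement entirely to Proposition~\ref{prop:newsilt}. That proposition shows that $\q$ is a $2$-term silting complex in $K^b(\proj B)$ and that the pair under consideration is $(\X(\p),\Y(\p)) = (\T(\q),\F(\q))$, i.e.\ precisely the torsion pair in $\mod B$ induced by the $2$-term silting complex $\q$. So it suffices to know that the torsion pair induced in a module category by \emph{any} $2$-term silting complex is functorially finite, and to apply this with $(B,\q)$ in place of $(A,\p)$.

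For this I would apply Proposition~\ref{prop:alt-des} to $B$ and $\q$, obtaining $\T(\q)=\Fac H^0(\q)$, then invoke \cite[Theorem~3.2]{air} (for $B$), which identifies $H^0(\q)$ as a support $\tau$-tilting $B$-module, and finally \cite[Theorem~2.7]{air}, by which $\Fac H^0(\q)$ is a functorially finite torsion class and hence $(\T(\q),\F(\q))$ is a functorially finite torsion pair. This is exactly the observation already recorded after Proposition~\ref{prop:alt-des}, transported from $(A,\p)$ to $(B,\q)$; what legitimizes the transport is precisely Proposition~\ref{prop:newsilt}, together with the fact that every result of Section~\ref{sec:silting} applies verbatim to a $2$-term silting complex over an arbitrary finite dimensional algebra.

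I do not expect a genuine obstacle here; the only point to keep in mind is that one is invoking the full strength of \cite[Theorem~2.7]{air} (the functorial finiteness, not merely the bijection with torsion classes). If one preferred an argument not relying on that input, one could instead unwind the equivalence $\Hom_{\DA}(\p,-)\colon\C(\p)\to\mod B$ of Theorem~\ref{thm-cp}(d) --- under which $(\X(\p),\Y(\p))$ is the image of the torsion pair $(\F(\p)[1],\T(\p))$ of $\C(\p)$ --- and build the missing left $\X(\p)$-approximations from the short exact sequences furnished by Proposition~\ref{prop:Ext-proj} applied to $(B,\q)$. That route is more laborious, the subtle point being that the truncation $\x\mapsto H^0(\x)$ is not itself a $\T(\p)$-approximation inside $\C(\p)$ because of the intervening $\Ext^1_A$-terms, so some care would be needed in assembling an honest approximation. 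The citation-based route is cleaner, which is why I would present it as the main proof.
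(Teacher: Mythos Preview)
Your proposal is correct and follows essentially the same route as the paper: reduce via Proposition~\ref{prop:newsilt} to the torsion pair $(\T(\q),\F(\q))$ of the 2-term silting complex $\q$ over $B$, then apply Proposition~\ref{prop:alt-des} and conclude functorial finiteness. The only cosmetic difference is that the paper cites the main result of \cite{sma} (that $\Fac M$ is a functorially finite torsion class) directly rather than going through \cite[Theorems~2.7 and 3.2]{air}, but this is precisely the observation already recorded after Proposition~\ref{prop:alt-des}, so the two arguments coincide.
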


\begin{proof}
This follows from Proposition~\ref{prop:newsilt}, Proposition~\ref{prop:alt-des} and the main result of \cite{sma}.
\end{proof}

\section{A silting theorem}\label{sec:tst}

If $\p$ is a projective presentation of a tilting $A$-module $T$, then $\nu \q[-1]$ is isomorphic to the cotilting $B$-module $D(T)_B = D\Hom_A(T,A)$, and moreover, the endomorphism algebra of this cotilting module is canonically isomorphic to $A$.

It is easy to check that this does not hold in our setting, that is:
in general it does not hold that $\End_{D^b(B)}(\q)$ is isomorphic to $A$,
where $\q$ is the 2-term silting complex in $K^b(\proj A)$, considered
in the previous section. However, we prove that $\End_{D^b(B)}(\q)$ is isomorphic to a factor algebra of $A$ and this factor algebra is isomorphic to $A$ if and only if $\p$ is a tilting complex.
This will then be used to provide mutual equivalences of torsion pairs, as we have in classical tilting theory.

Consider now, as in Remark \ref{unique-Q}, the map
$\p' \s{f}\rightarrow \p''$, coming from the triangle $\Delta_{\p}$ in Corollary \ref{cor:tri}, as an object $\hat{\q}$ in $K^b(\add \p)$,
by letting $\hat{\q}^i = 0$ for all $i \neq -1,0$. Recall that the functor $\Hom_{D^b(A)}(\p,-)$ gives an equivalence between additive categories $\add\p$ and $\proj B$, hence it induces an equivalence of triangulated categories $K^b(\add\p)$ and $K^b(\proj B)$. So it induces an algebra isomorphism $\End_{K^b(\add \p)}(\hat{\q}) \rightarrow \End_{D^b(B)}(\q)$.

We will define an algebra-homomorphism $\End_A(A) \to \End_{K^b(\add \p)}(\hat{\q})$.
For this, represent the object $\p'$ by
$P_{\vartriangle}^{-1} \s{p'}{\rightarrow} P_{\vartriangle}^{0}$,
and represent the object $\p''$ as the mapping cone of $A \to \p'$, that is
$P_{\vartriangle}^{-1} \oplus A \xrightarrow{\left(\begin{smallmatrix}-p' &
e\end{smallmatrix}\right)}{P}_{\vartriangle}^{0}.$

Now, let $a\in\End_A(A)$ and consider the solid diagram, whose rows are triangles,
\[\xymatrix{
\p''[-1]\ar[r]^-{-g[-1]}&A\ar[d]^a\ar[r]^e&\p'\ar@{-->}[d]^b\ar[r]^f&\p''\\
\p''[-1]\ar[r]^-{-g[-1]}&A\ar[r]^e&\p'\ar[r]^f&\p''
}\]
Since $\Hom_\DA(\p''[-1], \p') =0$, there is a map
$b \colon \p' \to \p'$ such that $be = ea$. Choose first such a
map $b=(b_1,b_2)$.
Then, in particular, the following diagram commutes in $\proj A$:
$$\xymatrix@R=1pc{
P_{\vartriangle}^{-1} \ar[rr]^{p'}\ar[dd]^{b_1} && P_{\vartriangle}^{0} \ar[dd]^{b_2} \\
& & \\
P_{\vartriangle}^{-1} \ar[rr]^{p'} && P_{\vartriangle}^{0}
}
$$
Now since the diagram
$$\xymatrix@R=1pc{
A \ar[rr]^{e}\ar[dd]^{a} && \p' \ar[dd]^{(b_1, b_2)} \\
& & \\
A \ar[rr]^{e} && \p'
}
$$
commutes in $K^b(\proj A)$, the chain map 
\[\xymatrix{
0\ar[rr]\ar[d]&&A\ar[d]^{ea-b_2e}\\
P_{\vartriangle}^{-1} \ar[rr]^{p'} && P_{\vartriangle}^{0}
}\]
is null-homotopic. Then there is a map $A \s{t}{\rightarrow} P_{\vartriangle}^{-1}$,
such that $p't = ea - b_2 e$.

Next, consider the endomorphism $c$ of $$P_{\vartriangle}^{-1} \oplus A \xrightarrow{\left(\begin{smallmatrix}-p' &
e\end{smallmatrix}\right)}P_{\vartriangle}^{0}$$
given as follows
$$\xymatrix@R=1pc{
P_{\vartriangle}^{-1} \oplus A  \ar[rr]^{\left(\begin{smallmatrix} -p' & e\end{smallmatrix}\right)} \ar[dd]^{\left(\begin{smallmatrix}  b_1 & t \\ 0 & a \end{smallmatrix}\right)} && P_{\vartriangle}^{0} \ar[dd]^{b_2} \\
& & \\
P_{\vartriangle}^{-1} \oplus A \ar[rr]^{\left(\begin{smallmatrix}-p' & e\end{smallmatrix}\right)} && P_{\vartriangle}^{0}
}
$$
It is straightforward to check, that the map $c$ is a chain map and that we obtain a morphism of triangles
\[\xymatrix@R=1pc{
A \ar[rr]^e
\ar[dd]^a&&\p'\ar[rr]^{f}\ar[dd]^{b}&&\p''\ar[rr]^{g}\ar[dd]^{c}&&A[1]\ar[dd]^{a[1]}\\
\\
A \ar[rr]^e&&\p'\ar[rr]^{f}&&\p''\ar[rr]^{g}&&A[1]
}\]
where $f$ and $g$ now denote the maps
$$\xymatrix@R=1pc{
P_{\vartriangle}^{-1}  \ar[rr]^{p'} \ar[dd]^{\left(\begin{smallmatrix}  -1  \\ 0 \end{smallmatrix}\right)} && P_{\vartriangle}^{0} \ar[dd]^{1}
& & &
P_{\vartriangle}^{-1} \oplus A \ar[rr]^{\left(\begin{smallmatrix}-p' & e\end{smallmatrix}\right)} \ar[dd]^{\left(\begin{smallmatrix}  0 & 1 \end{smallmatrix}\right)} && P_{\vartriangle}^{0} \ar[dd]^{0}
\\
& & & & & \\
P_{\vartriangle}^{-1} \oplus A \ar[rr]^{\left(\begin{smallmatrix}-p' & e\end{smallmatrix}\right)} && P_{\vartriangle}^{0}
& & &
A \ar[rr]^{0} && 0
}
$$

%We need the following observation.
%\begin{lemma}\label{lem:zero}
%Let  $\p' \s{b}{\rightarrow} \p'$ and
% $\p'' \s{c}{\rightarrow} \p''$ be maps in $D^b(A)$ such that
% $(0,b,c)$ is a morphism of triangles, that is we have a commutative %diagram
% \[\xymatrix@R=1pc{
%A \ar[rr]^e
%\ar[dd]^0 %&&\p'\ar[rr]^{f}\ar[dd]^{b}&&\p''\ar[rr]\ar[dd]^{c}&&A[1]\ar[dd]^{0}\\
%\\
%A \ar[rr]^a&&\p'\ar[rr]^{f}&&\p''\ar[rr]&&A[1]
%}\]
%Then $(b,c)$, considered as an endomorphism of $\hat{\q}$, is the zero-map %in $K^b(\add \p)$.
%\end{lemma}

\begin{proposition}\label{prop:silt}
The map $\Phi_{\p} \colon \End_A(A) \to \End_{K^b}(\hat{\q})$ given by
$a \mapsto (b,c)$, where $b=(b_1,b_2)$ and $c=(\left(\begin{smallmatrix}  b_1 & t \\ 0 & a \end{smallmatrix}\right), b_2)$ are chain maps, is a well-defined and surjective algebra morphism with kernel given by
\begin{multline*}
\{v \alpha u \mid u \in \Hom_\DA(A,\p_I) \text{, } \alpha\in\Hom_\DA(\p_I,\p_{II}[-1]) \text{ and } \\ v \in\Hom_\DA(\p_{II}[-1], A) \text{ with } \p_I,\p_{II}\in\add\p\}.
\end{multline*}
Moreover, we have $\ker \Phi_{\p} =0$ if and only if $\Hom_{\DA}(\p,\p[-1]) =0$.
\end{proposition}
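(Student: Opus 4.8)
The plan is to verify the claim in several stages: first that $\Phi_\p$ is well-defined (independent of the choices of $b$ and $t$), then that it is an algebra homomorphism, then that it is surjective, then compute the kernel, and finally read off the last sentence as a corollary of the kernel description. For well-definedness, the map $b$ with $be = ea$ is unique only up to a map factoring through $g[-1]\colon \p''[-1]\to A$; and having fixed $b=(b_1,b_2)$, the nullhomotopy $t$ with $p't = ea - b_2 e$ is unique up to adding a map $A\to P_\vartriangle^{-1}$ that is annihilated by $p'$, i.e.\ up to $\Hom_A(A,\ker p')$-worth of choices. One checks directly that changing $b$ by $eu g[-1]$ (for $u\colon \p''[-1]\to \p'$... more precisely by a map through $A$) and adjusting $t$ accordingly changes $(b,c)$ by a chain map that is null-homotopic in $K^b(\add\p)$; hence the class of $(b,c)$ in $\End_{K^b(\add\p)}(\hat\q)$ is well-defined. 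That $\Phi_\p(1_A) = (1_{\p'},1_{\p''}) = \id_{\hat\q}$ is immediate from the construction (take $b=1$, $t=0$), and multiplicativity follows because given $a_1,a_2$ one can choose $b_{a_1 a_2} = b_{a_1}b_{a_2}$ and a compatible $t$, so the two composites agree on the nose before passing to homotopy classes; additivity is equally routine.

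For surjectivity, take an arbitrary chain endomorphism $(b,c)$ of $\hat\q$, i.e.\ a morphism $\p'\xrightarrow{b}\p'$, $\p''\xrightarrow{c}\p''$ with $fb = cf$ in $K^b(\add\p)$. Using $\Hom_{\DA}(\p'',\p'[1]) = 0$ (which holds since $\p',\p''\in\add\p$ and $\p$ is presilting) and the triangle $\Delta_\p$, I can complete $(b,c)$ to a morphism of triangles with some endomorphism $a$ of $A$ in the first spot; then the recipe above applied to that $a$ produces a chain map homotopic to $(b,c)$, using once more that $g$ is a right $\add\p$-approximation (Remark \ref{unique-Q}) to correct discrepancies by null-homotopic maps — this is the same bookkeeping already used in the proof of Proposition \ref{prop:newsilt}. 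So every homotopy class is hit.

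For the kernel: $\Phi_\p(a) = 0$ in $K^b(\add\p)$ means the morphism of triangles above becomes, after applying $\Hom_{\DA}(\p,-)$ equivalently working in $\DA$, a map where $(b,c)$ is null-homotopic; tracing through the octahedral/triangle bookkeeping, $(b,c)\simeq 0$ forces $a\colon A\to A$ to factor as $A\xrightarrow{u}\p_I\xrightarrow{\alpha}\p_{II}[-1]\xrightarrow{v}A$ through objects of $\add\p$, because the failure of $b$ to be determined by $a$ is exactly measured by $\Hom_{\DA}(\p''[-1],\p')$ and the failure of $c$ is measured by maps landing back in $A$ through $\add\p$. Conversely any such $v\alpha u$ lifts to a null-homotopic $(b,c)$. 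I expect this kernel computation to be the main obstacle: it requires carefully identifying, in the explicit two-row diagrams with the matrices $\bigl(\begin{smallmatrix} b_1 & t\\ 0 & a\end{smallmatrix}\bigr)$, precisely which homotopies on $\hat\q$ correspond to which factorizations of $a$ through $\add\p$, and checking both inclusions. Finally, the last sentence is immediate: by Corollary \ref{cor:tri}(3) we have $\Delta_\p\colon A\xrightarrow{e}\p'\xrightarrow{f}\p''\xrightarrow{g}A[1]$ with $\p',\p''\in\add\p$, so $\ker\Phi_\p = 0$ says no nonzero $a\in\End_A(A)=A$ factors through $\add\p$ in the stated way; taking $\p_I = \p'$, $\p_{II}=\p''$ and $u=e$, $v = -g[-1]$ shows the identity of $A$ factors through $\add\p$ iff a suitable $\alpha\in\Hom_{\DA}(\p',\p''[-1]) \cong \Hom_{\DA}(\p,\p[-1])$-type composite exists, and more precisely $\ker\Phi_\p$ is generated as an ideal by the image of $\Hom_{\DA}(\p,\p[-1])$ under the connecting maps; thus $\ker\Phi_\p = 0$ if and only if $\Hom_{\DA}(\p,\p[-1]) = 0$, which is exactly the condition that $\p$ be tilting.
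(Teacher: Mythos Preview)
Your outline follows the same overall architecture as the paper's proof: well-definedness, algebra map, surjectivity, kernel, final equivalence. The paper, however, executes every step by explicit matrix bookkeeping in the model $\p'' = \bigl(P_\vartriangle^{-1}\oplus A \xrightarrow{(-p'\ e)} P_\vartriangle^0\bigr)$, whereas you gesture at triangle-theoretic reasons and defer the computations. For well-definedness and surjectivity your sketch is adequate and matches the paper's strategy; the paper simply writes out the homotopies you allude to.

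There are two genuine gaps. First, as you yourself flag, the kernel computation is not carried out. The paper does both inclusions by direct calculation: given a null-homotopy $d\colon \p''\to\p'$ of $(b,c)$ in $K^b(\add\p)$, it extracts from the resulting equations an explicit factorization $a = \theta e$ with $\theta$ factoring through $\p''[-1]$; conversely, given $a = g[-1]ue$ it writes down the image $\Phi_\p(a)$ and exhibits the null-homotopy. Your phrase ``tracing through the octahedral/triangle bookkeeping'' does not substitute for this.

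Second, your argument for the final sentence is incorrect as written. You want $\ker\Phi_\p\neq 0$ to follow from $\Hom_{\DA}(\p,\p[-1])\neq 0$, and you observe that every kernel element has the shape $-g[-1]\alpha e$ for some $\alpha\colon \p'\to\p''[-1]$. But a nonzero $\alpha$ need not produce a nonzero composite $g[-1]\alpha e$; the implication you need does not come for free. The paper closes this gap by taking a nonzero $\eta\in\Hom_{\DA}(\p,\p[-1])$, representing it as a chain map $P^0\to P^{-1}$, picking indecomposable projective summands $P_i\subset P^0$ and $P_j\subset P^{-1}$ on which the component of $\eta$ is nonzero, and observing that the induced map $A\twoheadrightarrow P_i \xrightarrow{\eta_{ij}} P_j \hookrightarrow A$ is a nonzero element of $\End_A(A)$ lying in $\ker\Phi_\p$. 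Your remark about ``the identity of $A$ factors through $\add\p$'' is a red herring; nothing forces $1_A$ itself to lie in the kernel.
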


\begin{proof}
In order to show that $\Phi_{\p}$ is well-defined, for any map $a\in\End_A(A)$, take two maps $(b^\chi,c^\chi)$, $\chi=1,2$, in $\End_{K^b(\add \p)}(\hat{\q})$ of the form
\[\xymatrix@R=1pc{
 \p'\ar[rr]^{\Big( \left(\begin{smallmatrix} -1 \\  0 \end{smallmatrix}\right), 1 \Big)}\ar[dd]_{(b^\chi_1,b^\chi_2)}&&\p''
 \ar[dd]^{\Big( \left(\begin{smallmatrix} b^\chi_1 & t^\chi \\ 0 & a \end{smallmatrix}\right), b^\chi_2 \Big) } \\
\\
\p'\ar[rr]^{\Big( \left(\begin{smallmatrix} -1 \\  0 \end{smallmatrix}\right), 1 \Big)}&&\p''
}.\]
We need to prove that the two maps $(b^\chi,c^\chi)$ are homotopic. This is equivalent to showing that their difference
\begin{equation}\tag{$\star$}
\xymatrix@R=1pc{
 \p'\ar[rr]^{\Big( \left(\begin{smallmatrix} -1 \\  0 \end{smallmatrix}\right), 1 \Big)}\ar[dd]_{(b^0_1,b^0_2)}&&\p''
 \ar[dd]^{\Big( \left(\begin{smallmatrix} b^0_1 & t^0 \\ 0 & 0 \end{smallmatrix}\right), b^0_2 \Big) } \\
\\
\p'\ar[rr]^{\Big( \left(\begin{smallmatrix} -1 \\  0 \end{smallmatrix}\right), 1 \Big)}&&\p''
}\end{equation}
is null-homotopic in $K^b(\add \p)$, where $b^0_1=b^1_1-b^2_1$, $b^0_2=b^1_2-b^2_2$ and $t^0=t^1-t^2$.

Consider the map $\p'' \s{\mu}{\rightarrow} \p'$
defined as follows:
$$\xymatrix@R=1pc{
P_{\vartriangle}^{-1} \oplus A  \ar[rr]^{\left(\begin{smallmatrix} -p' & e\end{smallmatrix}\right)} \ar[dd]^{\left(\begin{smallmatrix}  -b^0_1 & -t^0  \end{smallmatrix}\right)} && P_{\vartriangle}^{0} \ar[dd]^{b^0_2} \\
& & \\
P_{\vartriangle}^{-1}  \ar[rr]^{p'} && P_{\vartriangle}^{0}
}
$$
Then it is easily verified that $\mu f = (b^0_1,b^0_2)$, and that  $f \mu = \Big( \left(\begin{smallmatrix} b^0_1 & t^0 \\ 0 & 0 \end{smallmatrix}\right), b^0_2 \Big)$
in $\add \p$.
Hence, the map $(\star)$ is null-homotopic and therefore $\Phi_{\p}$ is well-defined. It is easy to check that it is
an algebra homomorphism.

We next show that $\Phi_{\p}$ is surjective.
Consider an arbitrary map $(b,c)$ in $\End_{K^b(\add \p)}(\hat{\q})$ represented by
\[\xymatrix@R=1pc{
 \p'\ar[rr]^{\Big( \left(\begin{smallmatrix} -1 \\  0 \end{smallmatrix}\right), 1 \Big)}\ar[dd]_{(b_1,b_2)}&&\p''
 \ar[dd]^{\Big( \left(\begin{smallmatrix} c_1 & c_2 \\ c_3 & c_4 \end{smallmatrix}\right), c_0 \Big) } \\
\\
\p'\ar[rr]^{\Big( \left(\begin{smallmatrix} -1 \\  0 \end{smallmatrix}\right), 1 \Big)}&&\p''
}\]
It is sufficient to show that
such map is equivalent to a map of the form
\[\xymatrix@R=1pc{
 \p'\ar[rr]^{\Big( \left(\begin{smallmatrix} -1 \\  0 \end{smallmatrix}\right), 1 \Big)}\ar[dd]_{(b_1,b_2)}&&\p''
 \ar[dd]^{\Big( \left(\begin{smallmatrix} b_1 & u \\ 0 & a \end{smallmatrix}\right), b_2 \Big) } \\
\\
\p'\ar[rr]^{\Big( \left(\begin{smallmatrix} -1 \\  0 \end{smallmatrix}\right), 1 \Big)}&&\p''
}\]
for some value of $a$, and for a $u$ satisfying $p'u= e a - b_2 e$. Here, the condition $p'u= e a - b_2 e$, together with $(b_1,b_2)$ being a chain map, ensures that $\Big( \left(\begin{smallmatrix} b_1 & u \\ 0 & a \end{smallmatrix}\right), b_2 \Big)$ is a chain map.

Since $cf = fb$, we have that the following maps
$$\xymatrix@R=1pc{
 P^{-1}\ar[rr]^{p'}\ar[dd]_{\left(\begin{smallmatrix} -c_1 \\  -c_3 \end{smallmatrix}\right)} && P^0
 \ar[dd]^{c_0} & & & P^{-1}\ar[rr]^{p'}\ar[dd]_{\left(\begin{smallmatrix} -b_1 \\  0 \end{smallmatrix}\right)} && P^0
  \ar[dd]^{b_2}
 \\
\\
P^{-1} \oplus A \ar[rr]^{\left(\begin{smallmatrix} -p' & e \end{smallmatrix}\right)}&&P^0 & & &
P^{-1} \oplus A \ar[rr]^{\left(\begin{smallmatrix} -p' & e \end{smallmatrix}\right)}&&P^0
}$$
are homotopic in $K^b(\proj A)$. Hence, there exists
$\left(\begin{smallmatrix} x \\ y \end{smallmatrix}\right)
\colon P^0 \to P^{-1} \oplus A$, such that
$\left(\begin{smallmatrix} x \\ y \end{smallmatrix}\right)p' =
\left(\begin{smallmatrix} c_1-b_1 \\ c_3 \end{smallmatrix}\right)$
and
$b_2-c_0 = \left(\begin{smallmatrix} -p' & e \end{smallmatrix}\right) \left(\begin{smallmatrix} x \\ y \end{smallmatrix}\right)= -p'x +ey$.

It is now straightforward to verify that the map
$c= \Big( \left(\begin{smallmatrix} c_1 & c_2 \\ c_3 & c_4 \end{smallmatrix}\right), c_0 \Big)$ is homotopic to to the map
$\Big( \left(\begin{smallmatrix} b_1 & c_2+xe \\ 0 & c_4+ye \end{smallmatrix}\right), b_2 \Big)$, and that
$u \colon = c_2+xe$ satisfies $p'u= e a - b_2 e$ where $a=c_4+ye$.
This proves the claim, and hence $\Phi_{\p}$ is surjective.
%
% it is sufficient
%to prove that for any $(b,c) = \Big( (b_1,b_2), \left(\begin{smallmatrix} %c_1 & c_2 \\ c_3 & c_4 \end{smallmatrix}\right), c_0 \Big)$, such that the %following diagram
%commutes

Assume now $a$ is in the kernel in $\Phi_{\p}$, so that $(b,c)$ is homotopic to zero. That
is, there exists a chain map $d:\p''\to\p'$ of the following form
$$\xymatrix@R=1pc{
P_{\vartriangle}^{-1} \oplus A  \ar[rr]^{\left(\begin{smallmatrix} -p' & e\end{smallmatrix}\right)} \ar[dd]^{\left(\begin{smallmatrix}  d_1 & d_2  \end{smallmatrix}\right)} && P_{\vartriangle}^{0} \ar[dd]^{w} \\
& & \\
P_{\vartriangle}^{-1}  \ar[rr]^{p'} && P_{\vartriangle}^{0}
}
$$
such that $b=df$ and $c=fd$ in $\add\p$. So $(b_1, b_2)$ is homotopic to $(-d_1,w) = ((d_1,d_2),w)\Big(\left(\begin{smallmatrix}  -1 \\ 0  \end{smallmatrix}\right), 1\Big) $
and 
$\Big(\left(\begin{smallmatrix} b_1 & t \\ 0 & a \end{smallmatrix}\right), b_2\Big)$
is homotopic to
$\Big(\left(\begin{smallmatrix} -d_1 & -d_2 \\ 0 & 0 \end{smallmatrix}\right), w\Big) = \Big(\left(\begin{smallmatrix} -1  \\ 0 & \end{smallmatrix}\right), 1 \Big)((d_1,d_2),w)$.

There is then a map $\delta \colon P_{\vartriangle}^{0} \to P_{\vartriangle}^{-1}$ and
such that
$p' \delta = b_2 -w$ and $\delta p' = b_1 + d_1$, and a map
$\left(\begin{smallmatrix} \epsilon  \\ \theta \end{smallmatrix}\right)\colon
P_{\vartriangle}^{0} \to P_{\vartriangle}^{-1} \oplus A$ such that
$$ \left(\begin{matrix} \epsilon  \\ \theta \end{matrix}\right)
\left(\begin{matrix} -p' & e \end{matrix}\right) =
\left(\begin{matrix} -\epsilon p' & \epsilon e  \\  -\theta p' & \theta e \end{matrix}\right)  =
\left(\begin{matrix} b_1 + d_1 & t + d_2  \\  0 & a \end{matrix}\right)
$$
and such that
$\left(\begin{smallmatrix} -p' & e \end{smallmatrix}\right)
\left(\begin{smallmatrix} \epsilon  \\ \theta \end{smallmatrix}\right)=
-p' \epsilon + e \theta = b_2 -w$.
Combining these equations we obtain
\begin{align*}
 p'(\delta + \epsilon)  &= e \theta  &  (\delta + \epsilon)p'  &= 0.
\end{align*}
Note that in particular we have $\theta p' =0$ and $\theta e = a$.
In this way we obtain that the map $a\colon A \to A$ factors as follows
$$\xymatrix@R=1pc{
& A \ar[dd]^e & &\\
& & &\\
P_{\vartriangle}^{-1}   \ar[r]^{p'} & P_{\vartriangle}^{0}  \ar[dd]^{\left(\begin{smallmatrix}  \delta + \epsilon \\ \theta \end{smallmatrix}\right)} & & \\
& & &\\
& P_{\vartriangle}^{-1} \oplus A  \ar[rr]^{\left(\begin{smallmatrix}  -p' & e  \end{smallmatrix}\right)} \ar[dd]^{\left(\begin{smallmatrix}  0 & 1 \end{smallmatrix}\right)} & &  P_{\vartriangle}^0 \\
& & & \\
& A & &
}
$$
So we have proved that
\begin{multline*}
\ker \Phi_{\p} \subseteq I =\{v \alpha u \mid u \in \Hom_\DA(A,\p_I) \text{, } \alpha\in\Hom_\DA(\p_I,\p_{II}[-1]) \text{ and } \\ v \in\Hom_\DA(\p_{II}[-1], A) \text{ with } \p_I,\p_{II}\in\add\p\}.
\end{multline*}
Next, we prove that $I \subseteq \ker \Phi_{\p}$. Let $a$ be an element in
$I$. Since the map $e \colon A \to \p'$ is a left $\add \p$-approximation, and the
map $g \colon \p'' \to A[1]$ is a right $\add \p$-approximation, we have that
$a = g[-1]ue$ for some map $u \colon \p' \to \p''[-1]$.

Now, assume $u$ is represented by
$P_{\vartriangle}^0 \s{\left(\begin{smallmatrix} u_1 \\ u_2 \end{smallmatrix}\right)}{\rightarrow} P_{\vartriangle}^{-1} \oplus A$, so we have
${\left(\begin{smallmatrix} u_1 p' \\ u_2 p' \end{smallmatrix}\right)}  =
{\left(\begin{smallmatrix} 0 \\ 0 \end{smallmatrix}\right)}$
and
$p'u_1 = eu_2$ and $a = u_2 e$.

Consider the map in $\End_{K^b(\add \p)}(\hat{\q})$ given
by
\[\xymatrix@R=1pc{
\p'\ar[rr]^f\ar[dd]_{(0, e u_2)} && \p''\ar[dd]^{\Big(\left(\begin{smallmatrix}
0 & 0\\ 0 & u_2 e \end{smallmatrix}\right), e u_2\Big)}\\
\\
\p'\ar[rr]^f&& \p''
},\]
Since $a = u_2e$, this map must be homotopic to $\Phi_{\p}(a)$.
The map $(0,eu_2)$ is nullhomotopic in $K^b(\proj A)$, since $u_1 p'= 0$ and
$e u_2= p' u_1$.
Moreover, the map $\Big(\left(\begin{smallmatrix}
0 & 0\\ 0 & u_2 e \end{smallmatrix}\right), eu_2\Big)$ is also
nullhomotopic in $K^b(\proj A)$, since $$\left(\begin{smallmatrix}
0 \\ u_2 \end{smallmatrix}\right) \left(\begin{smallmatrix}
-p' & e  \end{smallmatrix}\right) =
\left(\begin{smallmatrix}
0 & 0 \\ -u_2 p' & u_2 e \end{smallmatrix}\right)
 =
\left(\begin{smallmatrix}
0 & 0 \\ 0 & u_2 e \end{smallmatrix}\right)$$
and
$\left(\begin{smallmatrix} -p' & e  \end{smallmatrix}\right)
 \left(\begin{smallmatrix}
 0 \\ u_2 \end{smallmatrix}\right) = e u_2$.
Hence $a$ is in $\ker \Phi_{\p}$.

We are now left with proving that $\ker \Phi_{\p} =0$ if and only if $\Hom_{\DA}(\p,\p[-1]) =0$. By the first part, we have that $\Hom_{\DA}(\p,\p[-1]) =0$
implies that $\ker \Phi_{\p} =0$.
Assume $\Hom_{\DA}(\p,\p[-1])  \neq 0$.
Then $\Hom_\DA(\p,\p[-1])$ contains a non-zero element $\eta$, which is a chain map:
      \[\xymatrix@R=1pc{
      P^{-1}\ar[r]^{p'}&P^0\ar[d]^\eta\\
      &P^{-1}\ar[r]^{p'}&P^0
      }\]
      So there are $P_i$, $P_j$, indecomposable direct summands of $P^0$, $P^{-1}$ respectively, such that the component of $\eta$ from $P_i$ to $P_j$ is not zero. This induces a non-zero morphism $a_\eta$ in $\Hom_A(A,A)$ which factors through $\eta$. Then $a_\eta$ is in $\ker\Phi_\p$. This concludes the proof.
\end{proof}

\begin{remark}
For any map $a\in\End_A(A)$, its image $(b,c)$ under the epimorphism $\Phi_\p$ makes the diagram
\[\xymatrix@R=1pc{
A \ar[rr]^e
\ar[dd]^a&&\p'\ar[rr]^{f}\ar[dd]^{b}&&\p''\ar[rr]^{g}\ar[dd]^{c}&&A[1]\ar[dd]^{a[1]}\\
\\
A \ar[rr]^e&&\p'\ar[rr]^{f}&&\p''\ar[rr]^{g}&&A[1]
}\]
commute in $K^b(\proj A)$. However, the converse is in general not true. For example, let $A=\k Q/I$, where $Q$ is the following quiver
\[\xymatrix{
1\ar@/^.5pc/[r]^\alpha&2\ar@/^.5pc/[l]^\beta
}\]
and $I$ is generated by $\alpha\beta\alpha$ and $\beta\alpha\beta$. Let $\p=\p_1\oplus\p_2$ with
\[\p_1:\ 0\to P_1\]
and
\[\p_2:\ P_2\to P_1.\]
Then $\p$ is a 2-term silting complex in $K^b(\proj A)$ and we have that $\p'\cong\p_1\oplus\p_1$ and $\p''\cong\p_2$. We take $a$ and $b$ to be zero maps and take $c$ to be the following chain map
\[\xymatrix{
P_2\ar[d]_{\beta\alpha}\ar[rr]&&P_1\ar[d]^0\\
P_2\ar[rr]&&P_1
}\]
It is easily verified that the maps $a$, $b$, $c$ make the diagram commutes. But the pair $(b,c)$ regarded as a chain map from $\hat{\q}$ to itself is not null-homotopic in $K^b(\add\p)$. This means that $(b,c)$ is not $\Phi_\p(a)$.
\end{remark}

The following corollary shows that in the tilting case, our result covers the classical result.

\begin{corollary}\label{cor:tilt}
Under the same notation as before, $\p$ is a tilting complex if and only if $\Phi_{\p}$ is an isomorphism. In this case, $\q$ is also tilting.
\end{corollary}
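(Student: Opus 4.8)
The plan is to combine Proposition~\ref{prop:silt} with the already-established symmetry between $\p$ and $\q$. By Proposition~\ref{prop:silt}, the kernel of $\Phi_{\p}$ vanishes precisely when $\Hom_{\DA}(\p,\p[-1]) = 0$, which is exactly the definition of $\p$ being a tilting complex. Since $\Phi_{\p}$ is always a surjective algebra morphism $A \to \End_{D^b(B)}(\q)$, it is an isomorphism if and only if $\ker\Phi_{\p} = 0$, so the first equivalence is immediate from that proposition. The only thing requiring a genuine argument is the last sentence: that when $\p$ is tilting, $\q$ is tilting as well.

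For this, first I would note that $\q$ is already known to be a $2$-term silting complex in $K^b(\proj B)$ by Proposition~\ref{prop:newsilt}, so it remains to check $\Hom_{D^b(B)}(\q,\q[-1]) = 0$. The natural approach is to feed $\q$ (in place of $\p$) into Proposition~\ref{prop:silt}: applying that construction to the silting complex $\q$ over $B$ produces a surjective algebra morphism $\Phi_{\q}\colon B \to \End_{D^b(\overline A)}(\hat{\q}')$ for the corresponding ``second iterate'' complex, with $\ker\Phi_{\q} = 0$ if and only if $\Hom_{D^b(B)}(\q,\q[-1]) = 0$. So it suffices to identify the second iterate with (a stalk complex built from) $B$ and to show $\Phi_{\q}$ is an isomorphism when $\p$ is tilting. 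The cleanest route is via the triangle $\Delta_{\p}\colon A \xrightarrow{e} \p' \xrightarrow{f} \p'' \xrightarrow{g} A[1]$: applying the equivalence $\Hom_{\DA}(\p,-)\colon \add\p \to \proj B$ turns $\hat{\q}$ into $\q$, and applying $\Hom_{\DA}(\p,-)$ to $\Delta_{\p}$ yields — using that $\Hom_{\DA}(\p,A)$ and $\Hom_{\DA}(\p,A[1])$ assemble (via Corollary~\ref{cor:equiv} and Lemma~\ref{lem:iso}) into the complex whose cohomologies recover $H^0$ and $H^{-1}$ of a stalk complex over $B$ — a triangle in $K^b(\proj B)$ of the form $B \to \q' \to \q'' \to B[1]$ with $\q',\q'' \in \add\q$, i.e. the triangle $\Delta_{\q}$. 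Thus the complex produced from $\q$ by the construction of Section~\ref{sec:theory} is $\Hom_{D^b(B)}(\q,\Delta_{\q})$-induced, and by Remark~\ref{unique-Q} this is, up to isomorphism, a stalk complex supported on $B$; in particular its endomorphism algebra over $D^b(\overline A)$ is $\overline A$ itself, and $\Phi_{\q}$ is the composite of $\Phi_{\p}^{-1}$ with the canonical identification, hence an isomorphism. By the first part applied to $\q$, this forces $\Hom_{D^b(B)}(\q,\q[-1]) = 0$, so $\q$ is tilting.

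A slightly more self-contained alternative for the last sentence, which avoids re-running Proposition~\ref{prop:silt}, is to argue directly: when $\p$ is tilting, $\Phi_{\p}$ is an isomorphism, so via $\Phi_{\ast}$ the module category $\mod\overline A$ is just $\mod A$, the torsion pair $(\X(\p),\Y(\p)) = (\T(\q),\F(\q))$ in $\mod B$ has both classes identified (through the equivalences of Corollary~\ref{cor:equiv}) with $\F(\p)[1]$ and $\T(\p)$, and one chases the triangle $\Delta_{\q}$ through these identifications to see that $\Hom_{D^b(B)}(\q,\q[-1])$ corresponds to a $\Hom$-space between objects of $\C(\p)$ of the wrong homological degree, hence vanishes by the $t$-structure property (Theorem~\ref{thm-cp}(a)).

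The main obstacle I anticipate is bookkeeping: carefully pushing the triangle $\Delta_{\p}$ through $\Hom_{\DA}(\p,-)$ and matching it with the defining triangle $\Delta_{\q}$ for the iterate, so that the ``iterate of the iterate is the original'' symmetry is made precise rather than merely plausible. Once that identification is in hand, every implication is formal, so I would state the corollary's proof tersely, leaning on Proposition~\ref{prop:silt} for the equivalence and on the symmetry of the $\p \leftrightarrow \q$ construction (Remark~\ref{unique-Q} and Proposition~\ref{prop:newsilt}) for the ``$\q$ is tilting'' clause.
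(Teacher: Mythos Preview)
Your handling of the equivalence is correct and matches the paper: both simply invoke the last clause of Proposition~\ref{prop:silt}.

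For the claim that $\q$ is tilting, however, the paper takes a completely different and much more direct route. It picks an arbitrary $\alpha\in\Hom_{D^b(B)}(\q,\q[-1])$, uses Lemma~\ref{lem:funciso2} to write $\alpha=\Hom_{\DA}(\p,h)$ for some $h\colon \p''\to\p'$ with $hf=0=fh$, and then factors $h$ through the triangle $\Delta_{\p}$: from $fh=0$ one gets $h=eh_1$ for some $h_1\colon\p''\to A$, and from $hf=0$ one gets that $h_1f$ factors through $-g[-1]$ via some $h_2\colon\p'\to\p''[-1]$. The tilting hypothesis $\Hom_{\DA}(\p,\p[-1])=0$ kills $h_2$, whence $h_1$ factors through $g$, and then $h_1=0$ since $\Hom_{\DA}(A[1],A)=0$. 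So $h=0$ and $\alpha=0$. This is a five-line diagram chase using only the triangle $\Delta_{\p}$ and Lemma~\ref{lem:funciso2}.

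Your ``iterate'' approach, by contrast, has a genuine gap. The assertion that applying $\Hom_{\DA}(\p,-)$ to $\Delta_{\p}$ produces a triangle $B\to\q'\to\q''\to B[1]$ in $K^b(\proj B)$ with $\q',\q''\in\add\q$ is not correct as stated: $\Hom_{\DA}(\p,A)$ is not $B$ (that would be $\Hom_{\DA}(\p,\p)$), and the images $\Hom_{\DA}(\p,\p')$, $\Hom_{\DA}(\p,\p'')$ are projective $B$-modules (stalk complexes), not objects of $\add\q$. The actual triangle $\Delta_{\q}$ is built from an $\add\q$-approximation of $B[1]$, and identifying it with anything coming from $\Delta_{\p}$ requires nontrivial work you have not supplied. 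Your fallback sketch (``chase through $\C(\p)$ and use the $t$-structure'') is likewise only a heuristic: $\q$ does not live in $\C(\p)$, and $\Hom_{D^b(B)}(\q,\q[-1])$ does not obviously correspond to a single $\Hom$-space in $\DA$. The paper's direct argument avoids all of this.
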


\begin{proof}
Clearly $\p$ is a tilting complex if and only if $\Hom_{\DA}(\p,\p[-1])= 0$. Hence, the equivalence follows directly from the last part of
Proposition~\ref{prop:silt}.
Assume now $\Hom_{\DA}(\p,\p[-1])= 0$. It suffices to prove
that then also $\Hom_{D^b(B)}(\q,\q[-1])=0$. Note that each morphism $\alpha$ in $\Hom_{D^b(B)}(\q,\q[-1])$ has the following form:
      \[\xymatrix@R=1pc{
      \Hom_\DA(\p,\p')\ar[rr]^{\Hom_\DA(\p,f)}&&\Hom_\DA(\p,\p'')\ar[d]^\alpha\\
      &&\Hom_\DA(\p,\p')\ar[rr]^{\Hom_\DA(\p,f)}&&\Hom_\DA(\p,\p'')
      }\]
      with $\alpha\Hom_\DA(\p,f)=0=\Hom_\DA(\p,f)\alpha$. By Lemma~\ref{lem:funciso2}, there exist $h\colon \p''\rightarrow \p'$ with $\alpha=\Hom_\DA(\p,h)$ and $hf=0=fh$. Hence there exists the following morphism $h_1$:
      \[\xymatrix{
      A\ar[r]^e&\p'\ar@{-->}[ld]_{h_2}\ar[r]^f&\p''\ar@{-->}[ld]_{h_1}\ar[d]^h\ar[r]^g&A[1]\\
      \p''[-1]\ar[r]^{\ \ \ \ -g[-1]}&A\ar[r]^e&\p'\ar[r]^f&\p''
      }\]
      such that $h=eh_1$. Due to $eh_1f=hf=0$, there exists $h_2$ such that $-g[-1]h_2=h_1f$. But $h_2\in\Hom_\DA(\p',\p''[-1])=0$, so $h_1$ factors through $g$ and then it is zero since $\Hom_\DA(A[1],A)=0$. Therefore, $h=0$ which implies that $\alpha=0$. Thus, $\q$ is tilting.
\end{proof}

By now we have proved parts (d) and (e) of Theorem \ref{Main1}, we next
finish the proofs of (f) and (g). Adopting earlier notation, we let
$\X(\q) = \Hom_{D^b(B)}(\q,\F(\q)[1])$
and
$\Y(\q)=\Hom_{D^b(B)}(\q,\T(\q))$. Now, by Corollary \ref{cor:equiv}, we have
that $\Hom_{D^b(b)}(\q,-)$ induces equivalences $\T(\q) \to \Y(\q)$ and
$\F(\q)[1] \to \X(\p)$.

\begin{theorem}\label{thm:silt}
Let $\Phi_\ast \colon \mod\End_{D^b(B)}(\q)\hookrightarrow\mod A$
be the inclusion functor induced by $\Phi_{\p}$. Then $\Phi_\ast(\X(\q))=\T(\p)$ and $\Phi_\ast(\Y(\q))=\F(\p)$.
\end{theorem}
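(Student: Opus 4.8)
### Proof proposal for Theorem~\ref{thm:silt}

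\textbf{Overall strategy.} The plan is to identify, for an arbitrary $A$-module $X$, the ``hom'' and ``ext'' data one obtains by first viewing $X$ through the silting complex $\p$ and then through $\q$, and to show this process is the identity on the appropriate torsion classes. More precisely, recall from Proposition~\ref{prop:newsilt} that $\q$ is a $2$-term silting complex in $K^b(\proj B)$ with $\T(\q) = \X(\p)$ and $\F(\q) = \Y(\p)$. Thus the statement to prove is simply $\Phi_\ast(\X(\p)) = \T(\p)$ and $\Phi_\ast(\Y(\p)) = \F(\p)$. Since $\Phi_\ast$ is the restriction-of-scalars functor along the epimorphism $\Phi_\p \colon A \to \overline A$, and $\X(\p), \Y(\p)$ are subcategories of $\mod B$ while $\T(\p), \F(\p)$ are subcategories of $\mod A$, the content is: the $B$-modules $\Hom_\DA(\p, N[1])$ for $N \in \F(\p)$, regarded first as $\overline A$-modules (via the silting complex $\q$) and then as $A$-modules (via $\Phi_\p$), recover exactly $\F(\p)$; and dually $\Hom_\DA(\p, M)$ for $M \in \T(\p)$ recovers $\T(\p)$.

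\textbf{Key steps.} First I would make precise the $\overline A = \End_{D^b(B)}(\q)$-module structure. By Corollary~\ref{cor:equiv} applied to $\q$, the functor $\Hom_{D^b(B)}(\q, -[1])$ gives an equivalence $\F(\q) \to \X(\q)$, and $\Hom_{D^b(B)}(\q, -)$ gives $\T(\q) \to \Y(\q)$; here $\X(\q), \Y(\q)$ are subcategories of $\mod \overline A$ (not $\mod B$), since $\overline A$ is the endomorphism algebra of $\q$. Composing the equivalence $\Hom_\DA(\p,-[1]) \colon \F(\p) \to \X(\p) = \T(\q)$ from Corollary~\ref{cor:equiv} with $\Hom_{D^b(B)}(\q,-) \colon \T(\q) \to \Y(\q)$, I get an equivalence $G \colon \F(\p) \to \Y(\q) \subseteq \mod\overline A$. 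The heart of the argument is to show that $\Phi_\ast \circ G$ is naturally isomorphic to the identity functor on $\F(\p)$. Dually, composing $\Hom_\DA(\p,-) \colon \T(\p) \to \Y(\p) = \F(\q)$ with $\Hom_{D^b(B)}(\q,-[1]) \colon \F(\q) \to \X(\q)$ gives $H \colon \T(\p) \to \X(\q)$, and I must show $\Phi_\ast \circ H \cong \id_{\T(\p)}$. Granting these two natural isomorphisms, the theorem follows: $\Phi_\ast(\X(\q)) = \Phi_\ast(H(\T(\p))) = \T(\p)$ and $\Phi_\ast(\Y(\q)) = \Phi_\ast(G(\F(\p))) = \F(\p)$, since $H$, $G$ are equivalences onto $\X(\q)$, $\Y(\q)$ respectively.

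\textbf{Realizing the quasi-inverse concretely.} To exhibit $\Phi_\ast G \cong \id$, I would unwind $G$ using the triangle $\Delta_\p \colon A \xrightarrow{e} \p' \xrightarrow{f} \p'' \xrightarrow{g} A[1]$. For $N \in \F(\p)$, $\Hom_\DA(\p, N[1])$ is a $B$-module which, as a complex over $\q = (\Hom_\DA(\p,\p') \to \Hom_\DA(\p,\p''))$, sits in degree $0$; applying $\Hom_{D^b(B)}(\q,-)$ and using Lemma~\ref{lem:funciso} together with Lemma~\ref{lem:funciso2} (which translates $\Hom$ over $B$ into $\Hom$ in $K^b(\add\p)$), I expect to recover $\Hom_{K^b(\add\p)}(\hat\q, N[1]\text{-data})$, and then the triangle $\Delta_\p$ identifies this with a subquotient of $\Hom_\DA(A, N[\ast]) = \Hom_A(A,N)$-type groups, i.e.\ with $N$ itself as an $\overline A$-module. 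Concretely: apply $\Hom_\DA(-, N[1])$ to $\Delta_\p$ to get an exact sequence relating $\Hom_\DA(\p', N[1])$, $\Hom_\DA(\p'', N[1])$ and $\Hom_\DA(A, N[1]) = \Ext^1_A(A,N) = 0$, plus $\Hom_\DA(A,N) = N$; chasing this, and using that the cohomology of $\q$ in degree $-1$ computes $\Hom_\DA(\p, A/tA[1])$ etc.\ (as in the proof of Proposition~\ref{prop:newsilt}), should identify $G(N)$ with $N$ compatibly with the $A$-action factored through $\Phi_\p$. The naturality is automatic from functoriality of all the maps involved.

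\textbf{Main obstacle.} The delicate point is bookkeeping the module structures: $\Hom_\DA(\p, N[1])$ carries a left $B$-module structure, and to apply $\Hom_{D^b(B)}(\q,-)$ one needs this as a $B$-complex, after which $\Hom_{D^b(B)}(\q,-)$ produces an $\overline A = \End(\q)$-module; the claim is that, via $\Phi_\p \colon A \to \overline A$, this agrees with the original $A$-module $N$. Tracking that the $A$-action recovered on $G(N)$ is the one twisted by $\Phi_\p$ — rather than some a priori different action — requires carefully using the explicit description of $\Phi_\p(a) = (b,c)$ from Proposition~\ref{prop:silt} as a morphism of the triangle $\Delta_\p$, so that the action of $a \in A$ on $\Hom_\DA(\p,N[1])$-computed-via-$\q$ matches the functorial action of $a$ on $N$ via the map $A \xrightarrow{a} A \xrightarrow{e} \p'$. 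I expect this compatibility to follow from the fact that $\Phi_\p(a)$ was constructed precisely so that the diagram with rows $\Delta_\p$ and vertical maps $a, b, c, a[1]$ commutes in $K^b(\proj A)$; applying $\Hom_\DA(-, N[1])$ to that commuting ladder transports the $a$-action on $N$ (through $e$) to the $\Phi_\p(a)$-action on $G(N)$. Once this is set up, both required natural isomorphisms come out by the same argument, with roles of $\T$ and $\F$, and of $[1]$ and $[0]$, interchanged; the dual case uses Proposition~\ref{prop:Ext-proj} parts (1)--(4) in place of (5)--(8).
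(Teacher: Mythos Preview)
Your proposal is correct and follows essentially the same route as the paper: reduce via Proposition~\ref{prop:newsilt} to showing that for $N\in\F(\p)$ the composite $\Hom_{D^b(B)}(\q,\Hom_\DA(\p,N[1]))$ is naturally isomorphic to $N$ as an $A$-module, compute this by applying $\Hom_\DA(-,N[1])$ to the triangle $\Delta_\p$ together with Lemma~\ref{lem:funciso2}, and then verify $A$-linearity using the explicit description $\Phi_\p(a)=(b,c)$ and the commuting ladder of triangles. Your references to Lemma~\ref{lem:funciso} and Proposition~\ref{prop:Ext-proj} are extraneous (the paper's argument does not use them here), and the key vanishing you need in the long exact sequence is $\Hom_\DA(\p',N)=0$ for $N\in\F(\p)$, which you should make explicit; otherwise the argument is the paper's own.
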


\begin{proof}
We prove that $\Phi_\ast(\Y(\q))=\F(\p)$. The proof of $\Phi_\ast(\X(\q))=\T(\p)$ is similar. By Proposition~\ref{prop:newsilt}, we have that $\T(\q)=\X(\p)$, so we obtain that
 $$\Y(\q)=\Hom_{D^b(B)}(\q,\T(\q))=\Hom_{D^b(B)}(\q,\X(\p))=\Hom_{D^b(B)}(\q,\Hom_{\DA}(\p,\F(\p)[1])).$$
Then to complete the proof, we only need to prove that for any $Y\in\F(\p)$, there is an isomorphism of $A$-modules $Y\cong\Hom_{D^b(B)}(\q,\Hom_{\DA}(\p,Y[1])).$
Note first that
$\Hom_{D^b(B)}(\q,\Hom_{\DA}(\p,Y[1]))$ is the kernel of the map
\begin{multline*}
\Hom_B(\Hom_{\DA}(\p, f), \Hom_{\DA}(\p, Y[1])) \colon
\Hom_B(\Hom_{\DA}(\p,\p''),\Hom_{\DA}(\p,Y[1]))
\to \\
\Hom_B(\Hom_{\DA}(\p,\p'),\Hom_{\DA}(\p,Y[1])).
\end{multline*}
By Lemma~\ref{lem:funciso2}, this is isomorphic to
the kernel of $$\Hom_{\DA}(f,Y[1]) \colon \Hom_{\DA}(\p'',Y[1]) \to \Hom_{\DA}(\p',Y[1]).$$
Applying $\Hom_{\DA}(-,Y[1])$ to the triangle $\Delta_{\p}$, and using
that $\Hom_{\DA}(\p',Y) = 0$, since $Y$ is in $\F(\p)$, we obtain
that
$\ker\Hom_{\DA}(f,Y[1]) \cong \Hom_A(A,Y)$. Hence there is an isomorphism \[\varphi:\ \Hom_A(A,Y)\cong\Hom_{D^b(B)}(\q,\Hom_{\DA}(\p,Y[1]))\]
as vector spaces and for any map $v\in\Hom_A(A,Y)$, the corresponding map $\varphi(v)$ is the following chain map
\[
\xymatrix{
\Hom_\DA(\p,\p')\ar[rr]^{\Hom_\DA(\p,f)}\ar[d]&&\Hom_\DA(\p,\p'')\ar[d]^{\Hom_\DA(\p,v[1]g)}\\
0\ar[rr]&&\Hom_\DA(\p,Y[1])
}
\]
Moreover, for any map $a\in\End_A(A)$, by the commutative diagram
\[\xymatrix@R=1pc{
A\ar[rr]^{e}\ar[dd]^a&&\p'\ar[rr]^{f}\ar[dd]^{b}&&\p''\ar[rr]^{g}\ar[dd]^{c}&&A[1]\ar[dd]^{a[1]}\\
\\
A\ar[rr]^{e}&&\p'\ar[rr]^{f}&&\p''\ar[rr]^{g}&&A[1]
}\]
with $\Phi_{\p}(a) =(b,c)$, we have that $\Hom_\DA(\p,(va)[1]g)=\Hom_\DA(\p,v[1]gc)$. So $\varphi(va)=\Phi_\p(a)\varphi(v)$ which implies that the isomorphism $\varphi$ is a  $A$-module map. Thus, since $Y\cong\Hom_A(A,Y)$, we get the desired isomorphism.

\end{proof}

%\begin{corollary}\label{cor:ker}
%Under the same notation as before, there is an epimorphism of algebras
%\[\End_{\D^b(B)}(\q)\twoheadrightarrow A/(\ann\T(\p)\cap\ann\F(\p)).\]
%\end{corollary}
%
%\begin{proof}
%It is sufficient to prove that %$\ker\Phi(\p)\subset\ann\T(\p)\cap\ann\F(\p)$. But this follows from %that each homomorphism in $\ker\Phi(\p)$ factors through $\p$ and %$\p[1]$.
%\end{proof}

%The epimorphism in Corollary~\ref{cor:ker} is not isomorphic in %general, in other words, $\ker\Phi_{\p}\neq\ann\T(\p)\cap\ann\F(\p)$ in %general, see the first example in Section~\ref{sec:example}.

\section{Auslander-Reiten theory}\label{sec:ar}

As an application of Theorem \ref{Main1}, we show how
the AR-theory of $B = \End_{\DA}(\p)$ can be understood
in terms of the AR-theory of $A$. In the case where $A$ is hereditary,
we obtain particularly strong results. These will turn out
to be essential for studying the so-called {\em silted} algebras,
that is: algebras obtained as $\End_{\DA}(\p)$, for a 2-term
silting complex $\p$ over a hereditary algebra $A$.
Such algebras are investigated and characterized in \cite{bz}.

\subsection{Connecting sequences}
In this section we describe almost split sequences in $\mod B$. Similarly as in classical tilting theory, we call an almost split sequence in $\mod B$ whose left term lies in $\Y(\p)$ and whose right term lies in $\X(\p)$ a {\em connecting sequence}. We denote the AR-translation in a module
category by $\tau$.

%First, we recall the description of the projective and injective $B$-modules.

%\begin{proposition}\label{prop:proj-inj}
%The indecomposable projective $B$-modules are of the form
%$\Hom_\DA(\p,\p_i)$, where $\p_i$ is an indecomposable direct summand
%in $\p$. The corresponding indecomposable injective module is  $\Hom_\DA(\p,\nu \p_i)$.
%\end{proposition}

%\begin{proof}
%The first half of the statement is clear and the second half follows from the Auslander-Reiten formula $\Hom_\DA(\p,\nu \p_i)\cong D\Hom_\DA(\p_i,\p)$, where $D=\Hom_k(-,k)$.
%\end{proof}

\begin{lemma}
If $0\rightarrow Y\rightarrow E\rightarrow X\rightarrow 0$ is a connecting sequence, then there exists an indecomposable projective $A$-module $P_i$ such that $Y\cong \Hom_\DA(\p,\nu P_i)$.
\end{lemma}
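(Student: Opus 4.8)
The plan is to analyze which objects of $\mod B$ can occur as the left term $Y$ of a connecting sequence, using the structure theory developed in Sections 2--4. First I would recall that by Proposition~\ref{prop:newsilt} we have $\F(\q)=\Y(\p)=\Hom_{\DA}(\p,\T(\p))$, so that $Y$ lies in the image of the equivalence $\Hom_{\DA}(\p,-)\colon\T(\p)\to\Y(\p)$ from Corollary~\ref{cor:equiv}. Write $Y\cong\Hom_{\DA}(\p,M)$ for a unique indecomposable $M\in\T(\p)$. Since $\Hom_{\DA}(\p,-)$ restricted to $\C(\p)$ is an equivalence of abelian categories onto $\mod B$ (Theorem~\ref{thm-cp}(d)) which sends short exact sequences with terms in $\T(\p)$ to short exact sequences (Corollary~\ref{cor:equiv}), and since $X\in\X(\p)=\F(\p)[1]$'s image, the key point is to show that the definition of a connecting sequence forces $M$ to be $\Ext$-injective in $\T(\p)$; then Proposition~\ref{prop:Ext-proj}(3) gives $M\in\add t\nu A$, and the indecomposable summands of $t\nu A$ are (up to isomorphism) among the $t\nu P_i$, so $M\cong t\nu P_i$ for some indecomposable projective $P_i$. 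Finally, since $\nu P_i$ is an injective envelope of $t\nu P_i$ with $\nu P_i/t\nu P_i\in\F(\p)$, Lemma~\ref{lem:iso} gives $\Hom_{\DA}(\p,\nu P_i)\cong\Hom_{\DA}(\p,t\nu P_i)\cong Y$, which is the claim.

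To show $M$ is $\Ext$-injective in $\T(\p)$, suppose not: then there exists $N\in\T(\p)$ with $\Ext^1_A(N,M)\neq 0$. Via Corollary~\ref{cor:hom} (or directly via Theorem~\ref{thm-cp}(d) applied to $N$ and $M$, both in $\C(\p)$), a nonsplit extension of $N$ by $M$ in $\mod A$ with all terms in $\T(\p)$ transports under $\Hom_{\DA}(\p,-)$ to a nonsplit short exact sequence $0\to Y\to Z\to \Hom_{\DA}(\p,N)\to 0$ in $\mod B$ whose right-hand term $\Hom_{\DA}(\p,N)$ lies in $\Y(\p)$. But an almost split sequence starting at $Y$ is, up to isomorphism, the unique "largest" nonsplit extension: the existence of a nonsplit sequence with right term in $\Y(\p)$ would force a nonzero map from $X$ (the right term of the connecting sequence, lying in $\X(\p)$) to $\Hom_{\DA}(\p,N)\in\Y(\p)$, contradicting $\Hom_B(\X(\p),\Y(\p))=0$. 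Concretely: the connecting sequence $0\to Y\to E\to X\to 0$ being almost split, any nonsplit sequence $0\to Y\to Z\to W\to 0$ admits a map of short exact sequences to it, inducing $W\to X$; taking $W=\Hom_{\DA}(\p,N)\in\Y(\p)$ and $X\in\X(\p)$ this map is zero, which forces the sequence $0\to Y\to Z\to W\to 0$ to split, a contradiction. Hence $M$ is $\Ext$-injective in $\T(\p)$.

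The main obstacle I anticipate is the bookkeeping in the previous paragraph: one must be careful that the map $W\to X$ obtained from the almost split property, once it is zero, genuinely forces the sequence $0\to Y\to Z\to W\to 0$ to split rather than merely factoring through something. The clean way is to use that $0\to Y\to E\to X\to 0$ almost split means precisely that $\Ext^1_B(-,Y)$ evaluated at any $W$ fits: every element of $\Ext^1_B(W,Y)$ is a pullback of the class of the connecting sequence along some $W\to X$; if the only such map is $0$ then $\Ext^1_B(W,Y)=0$, so in particular $\Ext^1_B(\Hom_{\DA}(\p,N),Y)=0$ for all $N\in\T(\p)$, i.e. $Y$ is $\Ext$-injective in $\Y(\p)\cong\T(\p)$. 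By Corollary~\ref{cor:hom} this vanishing is equivalent to $\Ext^1_A(N,M)=0$ for all $N\in\T(\p)$, which is exactly $\Ext$-injectivity of $M$ in $\T(\p)$. Then Proposition~\ref{prop:Ext-proj}(3) and Lemma~\ref{lem:iso} finish the proof as above.
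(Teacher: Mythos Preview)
Your overall strategy is exactly the paper's: show that $Y$ is $\Ext$-injective in $\Y(\p)$, transport this via the equivalence $\Hom_{\DA}(\p,-)\colon\T(\p)\to\Y(\p)$ to conclude that the corresponding $M\in\T(\p)$ is $\Ext$-injective, invoke Proposition~\ref{prop:Ext-proj}(3) to get $M\cong t\nu P_i$ for some indecomposable $P_i$, and finish with Lemma~\ref{lem:iso}. The paper simply cites \cite[Lemma~0.1]{sma} for the first step rather than arguing it directly.

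Your direct argument for $\Ext$-injectivity, however, has a genuine gap. The assertion in your ``clean way'' --- that every element of $\Ext^1_B(W,Y)$ is the pullback of the almost split sequence along some map $W\to X$ --- is false. For an almost split sequence the \emph{opposite} holds: the right almost split property of $E\to X$ says precisely that every non-isomorphism $W\to X$ factors through $E\to X$, so the pullback along any such map is split. Concretely, for $A=kQ$ with $Q\colon 1\to 2\to 3$ and the almost split sequence $0\to S_3\to P_2\to S_2\to 0$, one has $\Hom_A(I_2,S_2)=0$ while $\Ext^1_A(I_2,S_3)\neq 0$; so the connecting map $\Hom(W,X)\to\Ext^1(W,Y)$ is not surjective in general.

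The correct argument reverses the direction, and this is essentially Smal\o's lemma. Given a nonsplit extension $\xi\colon 0\to Y\to Z\to W\to 0$, the map $Y\to Z$ is not a section, so the \emph{left} almost split property produces $\phi\colon E\to Z$ restricting to $Y\hookrightarrow Z$, hence a morphism of short exact sequences from the almost split sequence \emph{to} $\xi$, inducing $\psi\colon X\to W$. If $W\in\Y(\p)$ then $\psi=0$ since $X\in\X(\p)$, so $\phi$ factors through $Y\hookrightarrow Z$, yielding a retraction of $Y\to E$ --- contradicting the non-splitting of the almost split sequence. Hence $\Ext^1_B(W,Y)=0$ for all $W\in\Y(\p)$. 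Your informal sentence ``would force a nonzero map from $X$ \ldots\ to $\Hom_{\DA}(\p,N)\in\Y(\p)$'' already had this direction right; it is only the subsequent formalizations that flip it.
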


\begin{proof}
Since $Y\in\Y(\p)$ and $X= \tau^{-1} Y\in\X(\p)$, by \cite[Lemma~0.1]{sma}, $Y$ is Ext-injective in $\Y(\p)$. Then by Proposition~\ref{prop:Ext-proj} (3), there is an indecomposable $A$-module $Y'\in\add t\nu A$ such that $Y\cong\Hom_\DA(\p,Y')$. Note that for each indecomposable projective $A$-module $P_i$, if $t\nu P_i\not \cong0$, then it is indecomposable since $\nu P_i$ is its injective envelope. So $Y\cong\Hom_\DA(\p,t\nu P_i)$ for some indecomposable projective $A$-module $P_i$. Hence $Y\cong\Hom_\DA(\p,\nu P_i)$ by Lemma~\ref{lem:iso}.
\end{proof}

Note that $\Hom_\DA(\p,\nu P_i)= 0$ if and only if $\nu P_i\in\F(\p)$ if and only if $\nu P_i\in\add H^{-1}(\nu\p)$ if and only if $P_i[1]\in\add\p$. The following lemma is a generalization of the connecting lemma in tilting theory.

\begin{lemma}\label{lem:connecting}
Let $P_i$ be an indecomposable projective $A$-module with $P_i[1]\notin \add\p$. Then
\[\tau^{-1}\Hom_\DA(\p,\nu P_i)\cong\Hom_\DA(\p,P_i[1]).\]
In particular, $\Hom_\DA(\p,\nu P_i)$ is an injective $B$-module if and only if $P_i\in\add \p$.
\end{lemma}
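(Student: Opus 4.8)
The plan is to compute $\tau^{-1}_{B}\Hom_{\DA}(\p,\nu P_{i})$ from a \emph{minimal injective copresentation}, using the standard Auslander--Reiten formula: if $0\to M\to I^{0}\xrightarrow{j}I^{1}$ is a minimal injective copresentation of a $B$-module $M$, then $\tau^{-1}_{B}M\cong\coker(\nu_{B}^{-1}j)$, where $\nu_{B}=D\Hom_{B}(-,B)$. So the whole proof reduces to producing the right 2-term complex of injectives and checking minimality.

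Concretely, I would start from the direct summand triangle $P_{i}\xrightarrow{e_{i}}\p'_{i}\xrightarrow{f_{i}}\p''_{i}\xrightarrow{g_{i}}P_{i}[1]$ of $\Delta_{\p}$ and the associated 2-term complex $\q_{i}=[\Hom_{\DA}(\p,\p'_{i})\xrightarrow{\Hom_{\DA}(\p,f_{i})}\Hom_{\DA}(\p,\p''_{i})]$ in $K^{b}(\proj B)$ considered in the proof of Proposition~\ref{prop:newsilt}; recall from there that $\q_{i}$ is indecomposable, that $H^{0}(\q_{i})=\coker\Hom_{\DA}(\p,f_{i})\cong\Hom_{\DA}(\p,P_{i}[1])$, and that $\nu_{B}\q_{i}\cong[\Hom_{\DA}(\p,\nu\p'_{i})\xrightarrow{\Hom_{\DA}(\p,\nu f_{i})}\Hom_{\DA}(\p,\nu\p''_{i})]$ --- a 2-term complex of \emph{injective} $B$-modules --- satisfies $H^{-1}(\nu_{B}\q_{i})=\ker\Hom_{\DA}(\p,\nu f_{i})\cong\Hom_{\DA}(\p,\nu P_{i})$, this last identification using $\Hom_{\DA}(\p,\nu\p''_{i}[-1])\cong D\Hom_{\DA}(\p''_{i},\p[1])=0$. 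Thus $0\to\Hom_{\DA}(\p,\nu P_{i})\to\Hom_{\DA}(\p,\nu\p'_{i})\xrightarrow{\Hom_{\DA}(\p,\nu f_{i})}\Hom_{\DA}(\p,\nu\p''_{i})$ is at least \emph{some} injective copresentation of $\Hom_{\DA}(\p,\nu P_{i})$.

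Next I would use the hypothesis $P_{i}[1]\notin\add\p$ to upgrade this to a minimal copresentation. If $P_{i}\in\add\p$, then $\Hom_{\DA}(\p,\nu P_{i})$ is an injective $B$-module and $\Hom_{\DA}(\p,P_{i}[1])=0$ (as shown in the proof of Proposition~\ref{prop:newsilt}), so both sides vanish and there is nothing to prove. So assume $P_{i}\notin\add\p$; then $\q_{i}$ is not a stalk complex, because a stalk in degree $0$ would force $\p'_{i}=0$, hence $\p''_{i}\cong P_{i}[1]\in\add\p$, and a stalk in degree $-1$ would force $\p''_{i}=0$, hence $P_{i}\cong\p'_{i}\in\add\p$, both excluded. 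Since $\nu_{B}\colon K^{b}(\proj B)\to K^{b}(\inj B)$ is an equivalence, $\nu_{B}\q_{i}$ is again indecomposable and not a stalk. For any indecomposable 2-term complex of injectives $[I^{-1}\xrightarrow{e}I^{0}]$ with $I^{-1},I^{0}\neq0$, standard splitting arguments (a summand of $I^{-1}$ disjoint from $\ker e$ would split off a contractible summand; a summand of $I^{0}$ disjoint from the injective envelope of $\Im e$ would split off a degree-$0$ stalk summand) show that $0\to\ker e\to I^{-1}\xrightarrow{e}I^{0}$ is a minimal injective copresentation of $\ker e$. Applying this to $\nu_{B}\q_{i}$ gives minimality of the copresentation above.

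Finally I would apply $\nu_{B}^{-1}$ and the formula: using the equivalences $\Hom_{\DA}(\p,-)\colon\add\p\xrightarrow{\sim}\proj B$ and $\Hom_{\DA}(\p,-)\colon\add\nu\p\xrightarrow{\sim}\inj B$, which intertwine $\nu$ with $\nu_{B}$, one gets $\nu_{B}^{-1}\Hom_{\DA}(\p,\nu\p'_{i})=\Hom_{\DA}(\p,\p'_{i})$, $\nu_{B}^{-1}\Hom_{\DA}(\p,\nu\p''_{i})=\Hom_{\DA}(\p,\p''_{i})$ and $\nu_{B}^{-1}\Hom_{\DA}(\p,\nu f_{i})=\Hom_{\DA}(\p,f_{i})$, whence $\tau^{-1}\Hom_{\DA}(\p,\nu P_{i})\cong\coker\Hom_{\DA}(\p,f_{i})=H^{0}(\q_{i})\cong\Hom_{\DA}(\p,P_{i}[1])$. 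The ``in particular'' statement then follows at once: $\Hom_{\DA}(\p,\nu P_{i})$ is injective $\Leftrightarrow\tau^{-1}\Hom_{\DA}(\p,\nu P_{i})=0\Leftrightarrow\Hom_{\DA}(\p,P_{i}[1])=0\Leftrightarrow P_{i}\in\add\p$, the last equivalence again as in the proof of Proposition~\ref{prop:newsilt}. I expect the main obstacle to be exactly the minimality step --- verifying carefully that an indecomposable, non-stalk 2-term complex of injectives yields a \emph{minimal} injective copresentation of its $(-1)$st cohomology --- together with keeping the degenerate case $P_{i}\in\add\p$ cleanly separated so that the two sides of the isomorphism are compared correctly.
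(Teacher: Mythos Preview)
Your approach is exactly the paper's: the paper's entire proof consists of citing the identifications $\Hom_{\DA}(\p,P_i[1])\cong H^0(\q_i)$ and $\Hom_{\DA}(\p,\nu P_i)\cong H^{-1}(\nu_B\q_i)$ from the proof of Proposition~\ref{prop:newsilt}, and then (implicitly) using the standard fact that for an indecomposable 2-term complex $\q_i$ in $K^b(\proj B)$ which is not a stalk in degree~$0$ one has $\tau^{-1}_B H^{-1}(\nu_B\q_i)\cong H^0(\q_i)$. You have written out precisely this implicit step, together with the case split on whether $P_i\in\add\p$.

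One wrinkle in the place you yourself flagged. Your assertion that an indecomposable 2-term complex of injectives $[I^{-1}\xrightarrow{e}I^0]$ with $I^{-1},I^0\neq 0$ automatically gives a \emph{minimal} injective copresentation of $\ker e$ is not literally correct: indecomposability is taken in $K^b(\inj B)$, where contractible summands are zero, so the particular representative $[\Hom_{\DA}(\p,\nu\p'_i)\to\Hom_{\DA}(\p,\nu\p''_i)]$ could in principle carry a contractible summand $[J\xrightarrow{\sim}J]$ without violating indecomposability in $K^b$. Your ``would split off a contractible summand'' argument detects exactly this, but that is no contradiction in the homotopy category. The fix is painless and you can choose either of two routes: (i) first replace $\nu_B\q_i$ by its radical representative (remove contractible summands); both terms remain nonzero because $H^{-1}(\nu_B\q_i)\neq 0$ forces a nonzero degree~$-1$ term and $H^0(\q_i)\neq 0$ (from $P_i\notin\add\p$) forces a nonzero degree~$0$ term, and then your splitting arguments do give minimality; or (ii) observe that contractible summands contribute nothing to $\ker e$ nor to $\coker(\nu_B^{-1}e)$, so the identity $\tau^{-1}(\ker e)=\coker(\nu_B^{-1}e)$ is insensitive to them. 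Either way your computation $\tau^{-1}\Hom_{\DA}(\p,\nu P_i)\cong\coker\Hom_{\DA}(\p,f_i)=H^0(\q_i)\cong\Hom_{\DA}(\p,P_i[1])$ goes through, and the ``in particular'' clause follows as you wrote.
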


\begin{proof}
This follows from the fact that $\Hom_\DA(\p,P_i[1])\cong H^0(\q_i)$ and $\Hom_\DA(\p,\nu P_i)\cong H^{-1}(\nu\q_i)$ for a 2-term complex $\q_i$ in $K^b(\proj B)$, which was proved in the proof of Proposition~\ref{prop:newsilt}.
\end{proof}

Hence, we have shown that the connecting sequences are
of the form
\[0 \to \Hom_\DA(\p,\nu P_i) \to E \to \Hom_\DA(\p,P_i[1])\to  0.\]
It remains to describe the middle term $E$.

\begin{corollary}\label{cor:forsplit}
Let $P_i$ be an indecomposable projective $A$-module with $P_i\notin \add\p$ and $P_i[1]\notin\add\p$ and $E$ be the middle term of the almost split sequence starting at $\Hom_\DA(\p,\nu P_i)$. Then the canonical sequence of $E$ in the torsion pair $(\X(\p),\Y(\p))$ is
\[0\rightarrow \Hom_\DA(\p,\rad P_i[1])\rightarrow E\rightarrow \Hom_\DA(\p,\nu P_i/S_i)\rightarrow0\]
where $\rad P_i$ denotes the radical of $P_i$ and $S_i$ is the simple
module $P_i/\rad P_i$.
\end{corollary}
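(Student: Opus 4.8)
The plan is to transport the connecting sequence into the heart $\C(\p)$, where it becomes a triangle in $\DA$ whose connecting morphism can be identified explicitly.

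Write $X=\Hom_\DA(\p,P_i[1])$ and $Y=\Hom_\DA(\p,\nu P_i)$, so the connecting sequence is an almost split sequence $0\to Y\to E\to X\to0$ (it exists because, by the hypotheses and Lemma~\ref{lem:connecting}, $Y$ is indecomposable and non-injective and $X\cong\tau^{-1}Y$). By Lemma~\ref{lem:iso} this sequence is isomorphic to one with $Y\cong\Hom_\DA(\p,t\nu P_i)$ and $X\cong\Hom_\DA(\p,(P_i/tP_i)[1])$, where $t\nu P_i\in\T(\p)$ and $(P_i/tP_i)[1]\in\F(\p)[1]$ both lie in $\C(\p)$. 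Since $\Hom_\DA(\p,-)\colon\C(\p)\to\mod B$ is an equivalence of abelian categories (Theorem~\ref{thm-cp}(d)) taking the torsion pair $(\F(\p)[1],\T(\p))$ of $\C(\p)$ to $(\X(\p),\Y(\p))$, the connecting sequence is the image of an almost split sequence $0\to t\nu P_i\to\widetilde E\to(P_i/tP_i)[1]\to0$ in $\C(\p)$, i.e., by Theorem~\ref{thm-cp}(a), of a triangle $t\nu P_i\to\widetilde E\to(P_i/tP_i)[1]\xrightarrow{\xi}(t\nu P_i)[1]$ in $\DA$ with $\widetilde E\in\C(\p)$ and $\Hom_\DA(\p,\widetilde E)\cong E$. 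Under $\Ext^1_{\C(\p)}((P_i/tP_i)[1],t\nu P_i)\cong\Hom_\DA((P_i/tP_i)[1],(t\nu P_i)[1])=\Hom_A(P_i/tP_i,t\nu P_i)$ the class $\xi$ becomes a morphism $P_i/tP_i\to t\nu P_i$, and the cohomology sequence of the triangle gives $H^{-1}(\widetilde E)=\ker\xi$, $H^0(\widetilde E)=\coker\xi$. Applying $\Hom_\DA(\p,-)$ to the canonical sequence $0\to H^{-1}(\widetilde E)[1]\to\widetilde E\to H^0(\widetilde E)\to0$ of $\widetilde E$ in $\C(\p)$ (Theorem~\ref{thm-cp}(b),(c)) then identifies the canonical sequence of $E$ in $(\X(\p),\Y(\p))$ with $0\to\Hom_\DA(\p,(\ker\xi)[1])\to E\to\Hom_\DA(\p,\coker\xi)\to0$. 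The whole problem is thereby reduced to computing $\ker\xi$ and $\coker\xi$.

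To do this I would first collect the relevant structural facts. The hypothesis $P_i\notin\add\p$ means $P_i\notin\T(\p)$, so $tP_i$ cannot surject onto the simple module $P_i/\rad P_i$ (Nakayama), whence $tP_i\subseteq\rad P_i$; thus $P_i/tP_i$ is indecomposable with simple top $S_i$ and $\rad(P_i/tP_i)=\rad P_i/tP_i$. Dually, $P_i[1]\notin\add\p$ means $\nu P_i\notin\F(\p)$, so $t\nu P_i$ is a nonzero submodule of the uniform injective $\nu P_i$, hence indecomposable with simple socle $S_i=\soc\nu P_i\subseteq t\nu P_i$. Let $\phi\colon P_i/tP_i\to t\nu P_i$ be the composite $P_i/tP_i\twoheadrightarrow S_i\hookrightarrow t\nu P_i$; it is nonzero, with $\ker\phi=\rad P_i/tP_i$ and $\coker\phi=t\nu P_i/S_i$. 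Because $\phi$ factors through the simple (essential) socle of $t\nu P_i$, it is annihilated on the left by $\rad\End_A(t\nu P_i)$ — a non-invertible endomorphism of the indecomposable $t\nu P_i$ kills its socle — and because it factors through the simple top of $P_i/tP_i$, it is annihilated on the right by $\rad\End_A(P_i/tP_i)$. Now the key point: by the standard characterisation of almost split sequences (see e.g.\ \cite{ass}), the class $\xi$ of $0\to t\nu P_i\to\widetilde E\to(P_i/tP_i)[1]\to0$ is, up to a unit of $\End_{\C(\p)}(t\nu P_i)$, the unique nonzero element of $\Ext^1_{\C(\p)}((P_i/tP_i)[1],t\nu P_i)$ with exactly those two annihilation properties; since $\phi$ is such an element, $\xi$ and $\phi$ are proportional, so $\ker\xi=\rad P_i/tP_i$ and $\coker\xi\cong t\nu P_i/S_i$.

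Finally I would reconcile this with the asserted form by Lemma~\ref{lem:iso}: from $t(\rad P_i)=tP_i$ one gets $\Hom_\DA(\p,(\rad P_i/tP_i)[1])\cong\Hom_\DA(\p,\rad P_i[1])$, and from $t(\nu P_i/S_i)=t\nu P_i/S_i$ one gets $\Hom_\DA(\p,t\nu P_i/S_i)\cong\Hom_\DA(\p,\nu P_i/S_i)$; substituting these into the canonical sequence of $E$ found above, and using uniqueness of canonical sequences, gives the claim. I expect the step flagged above — matching the abstract connecting morphism $\xi$ with the concrete map $\phi$ that runs through the simple top of $P_i$ and the simple socle of $\nu P_i$ — to be the only non-routine part; everything else is bookkeeping with the two torsion pairs and with the equivalence $\C(\p)\simeq\mod B$.
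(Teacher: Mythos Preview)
Your argument is correct and takes a genuinely different route from the paper's. The paper splits into the two cases $S_i\in\T(\p)$ and $S_i\in\F(\p)$; in the case it spells out, it builds two explicit short exact sequences in $\mod B$ from the triangles $\rad P_i\to P_i\to S_i\to\rad P_i[1]$ and $S_i\to\nu P_i\to\nu P_i/S_i\to S_i[1]$, maps the almost split sequence into the latter via the lifting property of right almost split maps, uses one-dimensionality of $\Hom_B(\Hom_\DA(\p,P_i[1]),\Hom_\DA(\p,S_i[1]))\cong\Hom_A(P_i/tP_i,S_i)$ to control that map, and concludes with the snake lemma. Your approach instead transports the sequence to the heart $\C(\p)$, identifies the connecting class $\xi$ as an honest morphism in $\Hom_A(P_i/tP_i,t\nu P_i)$, and pins it down as the map through $S_i$ via the socle characterisation of almost split extensions; kernel and cokernel then give the torsion and torsion-free parts directly. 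Your route is more conceptual and avoids the case distinction; the paper's is more hands-on and stays inside $\mod B$.

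One point worth making precise: your uniqueness claim for $\xi$ is really the statement that the socle of $\Ext^1_{\C(\p)}((P_i/tP_i)[1],t\nu P_i)$ as a one-sided module over either endomorphism ring is simple. This follows from Auslander--Reiten duality in $\mod B$ (via the equivalence with $\C(\p)$): $\Ext^1(C,\tau C)\cong D\underline{\End}(C)$ has simple socle because $\underline{\End}(C)$ is local. With that, any nonzero $\phi$ in the socle equals $u\cdot\xi$ for a unit $u$ of $\End(t\nu P_i)$, and since $u$ preserves the simple socle $S_i\subseteq t\nu P_i$ you indeed get $\ker\xi=\ker\phi$ and $\coker\xi\cong\coker\phi$ on the nose. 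Everything else in your write-up (the cohomology computation of $\widetilde E$, the identifications $t(\rad P_i)=tP_i$ and $t(\nu P_i/S_i)=t\nu P_i/S_i$, and the final appeal to Lemma~\ref{lem:iso}) is routine and correct.
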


\begin{proof}
Since $(\T(\p),\F(\p))$ is a torsion pair, $S_i$ is either in $\T(\p)$ or in $\F(\p)$. We refer to the proof of \cite[Corollary~VI.4.10]{ass} where the first part (i.e. the case $S_i\in\T(\p)$) works in our case by a small suitable modification. However, the second part does not work in our case, instead, one need to use the dual proof of the first part. So for the convenience of readers, we give a proof for the case $S_i\in\F(\p)$.
Applying $\Hom_\DA(\p,-)$ to the triangle
\[\rad P_i\rightarrow P_i\rightarrow S_i\rightarrow \rad P_i[1]\]
yields a short exact sequence
\[0\rightarrow\Hom_\DA(\p,\rad P_i[1])\rightarrow\Hom_\DA(\p,P_i[1])\s{\delta}{\rightarrow}\Hom_\DA(\p,S_i[1])\rightarrow0.\]
Consider the short exact sequences
$$\xymatrix@R=1.6pc{
& & 0 \ar[d] & & \\
& & t\nu P_i \ar[d] & & \\
0 \ar[r]& S_i \ar[r]^{\alpha} & \nu P_i \ar[r]^{\beta} \ar[d]^{\gamma}
 & \nu P_i/S_i \ar[r] & 0\\
& & \nu P_i/ t\nu P_i \ar[d] & & \\
& & 0 & &
}
$$
%\[\begin{array}{ccccccccc}
%&&&&0&&&&\\
%&&&&\downarrow&&&&\\
%&&&&t\nu P_i&&&&\\
%&&&&\downarrow&&&&\\
%0&\rightarrow& S_i&\s{\alpha}\rightarrow& \nu P_i&\s{\beta}\rightarrow& %\nu P_i/S_i&\rightarrow& 0\\
%&&&&\ \ \ \downarrow\gamma&&&&\\
%&&&&\nu P_i/t\nu P_i&&&&\\
%&&&&\downarrow&&&&\\
%&&&&0&&&&
%\end{array}\]
Since $P_i[1] \notin \add\p$, we have that $\gamma \colon \nu P_i \to \nu P_i/t\nu P_i$ is not an isomorphism. Then the composition $\gamma\alpha=0$. So $\gamma$ factors through $\beta$. Because $\Hom_\DA(\p,\gamma[1])$ is an isomorphism by Lemma~\ref{lem:iso}, the map $\Hom_\DA(\p,\beta[1])$ is a monomorphism.
Hence we have a short exact sequence
\[0\rightarrow\Hom_\DA(\p,\nu P_i)\xrightarrow{\theta} \Hom_\DA(\p,\nu P_i/S_i)\rightarrow\Hom_\DA(\p,S_i[1])\rightarrow0\]
where $\theta= \Hom_\DA(\p,\beta)$. Since $\Hom_\DA(\p,S_i[1])\in\X(\p)$ and $\Hom_\DA(\p,\nu P_i/S_i)\in\Y(\p)$ by Lemma~\ref{lem:iso} and $\Hom_\DA(\p,S_i[1])\neq0$ by $S_i\in\F(\p)$, the sequence is not split. In particular, $\theta$ is not a section, so there is a commutative diagram:
\[\xymatrix@R=1pc{
0\ar[r]&\Hom_\DA(\p,\nu P_i)\ar[r]\ar@{=}[dd]&E\ar[r]\ar[dd]&\Hom_\DA(\p,P_i[1])\ar[r]\ar[dd]^h&0\\
\\
0\ar[r]&\Hom_\DA(\p,\nu P_i)\ar[r]^{\theta}&\Hom_\DA(\p,\nu P_i/S_i)\ar[r]&\Hom_\DA(\p,S_i[1])\ar[r]&0
}\]
where the upper sequence is the AR-sequence starting at
$\Hom_\DA(\p,\nu P_i)$. Note that $h \neq 0$, since otherwise
the upper sequence would be split exact.

Since $\Hom_\DA(\p,P_i[1])\cong\Hom_\DA(\p,P_i/tP_i[1])$ by Lemma~\ref{lem:iso} and $\Hom_\DA(\p,-[1])$ is an equivalence from $\F(\p)$ to $\X(\p)$ by Corollary~\ref{cor:equiv}, we have that
\[\Hom_B(\Hom_\DA(\p,P_i[1]),\Hom_\DA(\p,S_i[1]))\cong\Hom_A(P_i/tP_i,S_i).\]
Using that $S_i$ is in $\F(\p)$, by assumption, we have  $\Hom_A(P_i/tP_i,S_i)\cong\Hom_A(P_i,S_i)$, which is a one dimensional space. Therefore, since $h \neq 0$, it equals $k \delta$, for an element $k \in\k$.
Hence, $\ker h \cong \Hom_{\DA}(\p, \rad P_i[1])$.
Using the snake lemma, we obtain the following commutative diagram
\[\xymatrix{
&&0\ar[d]&0\ar[d]\\
&&\Hom_\DA(\p,\rad P_i[1])\ar@{=}[r]\ar[d]&\Hom_\DA(\p,\rad P_i[1])\ar[d]\\
0\ar[r]&\Hom_\DA(\p,\nu P_i)\ar[r]\ar@{=}[d]&E\ar[r]\ar[d]&\Hom_\DA(\p,P_i[1])\ar[r]\ar[d]^h&0\\
0\ar[r]&\Hom_\DA(\p,\nu P_i)\ar[r]^g&\Hom_\DA(\p,\nu P_i/S_i)\ar[r]\ar[d]&\Hom_\DA(\p,S_i[1])\ar[r]\ar[d]&0\\
&&0&0
}\]
where the middle column gives the required short exact sequence.
\end{proof}

\subsection{Separating and splitting silting complexes}

Recall that a torsion pair $(\X,\Y)$ in $\mod A$ is called
{\em split}
(or sometimes splitting) if each indecomposable $A$-module lies either in $\X$ or in $\Y$,
see \cite{ass}. In other words, $(\X,\Y)$ is split if and only if $\Ext_A^1(Y,X)=0$ for all $X\in\X$ and $Y\in\Y$.

\begin{definition}
Let $A$ be a finite dimensional algebra, let $\p$ be a 2-term silting complex in $K^b(\proj A)$ and $B=\End_\DA(\p)$. Then
\begin{description}
  \item[(1)] $\p$ is called {\em separating} if the induced torsion pair $(\T(\p),\F(\p))$ in $\mod A$ is split, and
  \item[(2)] $\p$ is called {\em splitting} if the induced torsion pair $(\X(\p),\Y(\p))$ in $\mod B$ is split.
\end{description}
\end{definition}

\begin{lemma}\label{lem:split}
A 2-term silting complex $\p$ is splitting if and only if $\Ext^2_A(\T(\p),\F(\p))=0$.
\end{lemma}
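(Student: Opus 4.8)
The plan is to translate the splitting condition for $(\X(\p),\Y(\p))$ in $\mod B$ back across the equivalences of Corollary~\ref{cor:equiv} into a condition in $\mod A$. Recall that $(\X,\Y)$ in $\mod B$ is split if and only if $\Ext^1_B(Y,X)=0$ for all $X\in\X(\p)$ and $Y\in\Y(\p)$. Since every object of $\X(\p)$ has the form $\Hom_\DA(\p,N[1])$ for a unique $N\in\F(\p)$ and every object of $\Y(\p)$ has the form $\Hom_\DA(\p,M)$ for a unique $M\in\T(\p)$, splitting is equivalent to
\[\Ext^1_B(\Hom_\DA(\p,M),\Hom_\DA(\p,N[1]))=0 \text{ for all } M\in\T(\p),\ N\in\F(\p).\]
By Corollary~\ref{cor:hom}, this $\Ext^1_B$-group is functorially isomorphic to $\Ext^2_A(M,N)$. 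Hence the splitting condition becomes precisely $\Ext^2_A(\T(\p),\F(\p))=0$, which is the assertion.

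So the proof is essentially a one-line invocation: first I would recall the criterion "$(\X(\p),\Y(\p))$ split $\iff$ $\Ext^1_B$ vanishes on $\Y(\p)\times\X(\p)$", then use Corollary~\ref{cor:equiv} to parametrise the objects of $\X(\p)$ and $\Y(\p)$ by $\F(\p)[1]$ and $\T(\p)$ respectively, and finally apply the second isomorphism of Corollary~\ref{cor:hom}, namely $\Ext^1_B(\Hom_\DA(\p,M),\Hom_\DA(\p,N[1]))\cong\Ext^2_A(M,N)$, to conclude.

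The only point that needs a word of care — and the mild obstacle — is making sure the reduction to generators is legitimate: one must check that $\Ext^2_A(\T(\p),\F(\p))=0$ as a statement about all objects $M\in\T(\p)$, $N\in\F(\p)$ really does match "$\Ext^1_B(Y,X)=0$ for all $X\in\X(\p)$, $Y\in\Y(\p)$", given that the parametrisations $M\mapsto\Hom_\DA(\p,M)$ and $N\mapsto\Hom_\DA(\p,N[1])$ are surjective (indeed bijective) onto the objects of $\Y(\p)$ and $\X(\p)$ by Corollary~\ref{cor:equiv}. Since those functors are equivalences $\T(\p)\xrightarrow{\sim}\Y(\p)$ and $\F(\p)\xrightarrow{\sim}\X(\p)$, the two quantified statements are literally the same, so no further argument is required.
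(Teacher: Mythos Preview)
Your proof is correct and follows exactly the same approach as the paper: the paper's proof is the single sentence ``This follows from the second isomorphism in Corollary~\ref{cor:hom}'', and you have simply spelled out that invocation in full, using the split criterion $\Ext^1_B(\Y(\p),\X(\p))=0$ together with the equivalences of Corollary~\ref{cor:equiv} and the isomorphism $\Ext^1_B(\Hom_\DA(\p,M),\Hom_\DA(\p,N[1]))\cong\Ext^2_A(M,N)$ from Corollary~\ref{cor:hom}.
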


\begin{proof}
This follows from the second isomorphism in Corollary~\ref{cor:hom}.
\end{proof}

Note that in particular Lemma \ref{lem:split} implies that
if $A$ is hereditary, then all 2-term silting complexes are splitting.
In a forthcoming paper, \cite{bz}, we study endomorphism rings
of 2-term silting complexes over hereditary algebras. We now state a result which is of particular importance for
describing the AR-theory of silted algebras.

\begin{proposition}
If a silting complex $\p$ is splitting, then any almost split sequence in $\mod B$ lies entirely in either $\X(\p)$ or $\Y(\p)$, or else it is of the form
\begin{multline*}0\rightarrow \Hom_\DA(\p,\nu P_i)\rightarrow\Hom_\DA(\p,\rad P_i[1])\oplus\Hom_\DA(\p,\nu P_i/S_i)\rightarrow \\ \Hom_\DA(\p,P_i[1])\rightarrow0,\end{multline*}
where $P_i$ is an indecomposable projective $A$-module with $P_i\notin\add\p$ and $P_i[1]\notin\add \p$. Moreover, almost split sequences
in $\T(\p)$ and $\F(\p)$ are by $\Hom_\DA(\p,-)$ and $\Hom_\DA(\p,-[1])$
mapped to almost
split sequences in $\Y(\p)$ and $\X(\p)$, respectively.
\end{proposition}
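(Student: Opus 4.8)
The plan is to combine the structural description of connecting sequences from Corollary~\ref{cor:forsplit} with the equivalences of Corollary~\ref{cor:equiv} and the splitting hypothesis. Let $0\to L\to E\to N\to 0$ be an almost split sequence in $\mod B$. Since $(\X(\p),\Y(\p))$ is a torsion pair, we may look at the canonical sequences of $L$ and $N$. The first observation is that if $N\in\X(\p)$, then (since $\X(\p)$ is closed under factor objects and the sequence does not split) the middle term $E$ maps onto $N$ with kernel $L$, and we argue that $L\in\X(\p)$ as well: otherwise $tL$, the torsion-free part $L/tL$ of $L$ lying in $\Y(\p)$, would produce a nonzero map $\Ext^1_B(N,tL)\to 0$ contradiction -- more precisely, splitting of $(\X(\p),\Y(\p))$ forces $\Ext^1_B(N,L')=0$ whenever $N\in\X(\p)$ and $L'\in\Y(\p)$, so the canonical sequence of $L$ must be concentrated in $\X(\p)$. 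Hence the whole sequence lies in $\X(\p)$. Dually, if $L\in\Y(\p)$ we run the same argument with the roles reversed: the torsion part $tE$ lies in $\Y(\p)$, and since the sequence is not split and $N$ has a nonzero $\Y(\p)$-part, we get that $N\in\Y(\p)$, so the sequence lies entirely in $\Y(\p)$.

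The remaining case is $L\notin\Y(\p)$ and $N\notin\X(\p)$. Here I would argue that $L\in\Y(\p)$ is in fact forced unless $L$ has a nonzero $\X(\p)$-part; but an indecomposable is either in $\X(\p)$ or $\Y(\p)$ when the torsion pair splits, so in the remaining case $L\in\X(\p)$ and $N\in\Y(\p)$. Then one checks, using the canonical sequence of $E$, that $E$ has a nonzero torsion part $tE\in\X(\p)$ surjecting (via the projection $E\to N$) nontrivially, and a torsion-free part in $\Y(\p)$ into which $L$ embeds -- so this is a \emph{connecting sequence} in the sense of the previous subsection. By the Lemma preceding Lemma~\ref{lem:connecting}, $L\cong\Hom_\DA(\p,\nu P_i)$ for some indecomposable projective $P_i$ with $P_i[1]\notin\add\p$; by Lemma~\ref{lem:connecting}, $N=\tau^{-1}L\cong\Hom_\DA(\p,P_i[1])$; and since $L\in\X(\p)=\T(\q)$ is not injective, Lemma~\ref{lem:connecting} also gives $P_i\notin\add\p$. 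Then Corollary~\ref{cor:forsplit} identifies the canonical sequence of $E$, and because $(\X(\p),\Y(\p))$ splits, this canonical sequence itself splits, giving $E\cong\Hom_\DA(\p,\rad P_i[1])\oplus\Hom_\DA(\p,\nu P_i/S_i)$; substituting yields the displayed sequence.

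For the last sentence, given an almost split sequence $0\to X'\to Y'\to Z'\to 0$ in $\T(\p)$, apply $\Hom_\DA(\p,-)$. By Corollary~\ref{cor:equiv} this is an exact sequence in $\mod B$ with all terms in $\Y(\p)$, and since $\Hom_\DA(\p,-)\colon\T(\p)\to\Y(\p)$ is an equivalence it preserves irreducibility of maps and the property of being a minimal almost split sequence \emph{within} $\Y(\p)$; the point to check is that $\Y(\p)$ being closed under extensions in $\mod B$ (which follows from it being the torsion-free class of a torsion pair) upgrades "almost split in $\Y(\p)$'' to "almost split in $\mod B$''. The same argument with $\Hom_\DA(\p,-[1])\colon\F(\p)\to\X(\p)$ and the torsion class $\X(\p)$ handles the $\F(\p)$ case. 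The main obstacle I anticipate is the bookkeeping in the mixed case: verifying carefully that in the remaining case the sequence really is a connecting sequence (i.e. that $L$ lies in $\X(\p)$, not just "not in $\Y(\p)$'', and that $N$ lies in $\Y(\p)$), and that the hypotheses $P_i\notin\add\p$, $P_i[1]\notin\add\p$ of Corollary~\ref{cor:forsplit} are indeed met -- the first from non-injectivity of $L$ via Lemma~\ref{lem:connecting}, the second from $L\neq 0$.
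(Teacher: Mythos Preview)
Your case analysis in the first part is reversed because you have the $\Ext$-vanishing for a split torsion pair the wrong way round. By definition (see the paragraph before the Definition in \S\ref{sec:ar}), $(\X(\p),\Y(\p))$ split means $\Ext^1_B(Y,X)=0$ for $Y\in\Y(\p)$ and $X\in\X(\p)$, \emph{not} $\Ext^1_B(X,Y)=0$. So your implication ``$N\in\X(\p)\Rightarrow L\in\X(\p)$'' is false: a connecting sequence has precisely $N\in\X(\p)$ and $L\in\Y(\p)$. The correct (and simpler) argument is: since the torsion pair splits, the indecomposables $L$ and $N$ each lie in $\X(\p)$ or $\Y(\p)$; the case $L\in\X(\p)$, $N\in\Y(\p)$ is impossible because then $\Ext^1_B(N,L)=0$ forces the sequence to split; the two ``same side'' cases give sequences entirely in $\X(\p)$ or $\Y(\p)$ by extension-closure; and the remaining case $L\in\Y(\p)$, $N\in\X(\p)$ is the connecting case treated by Lemma~\ref{lem:connecting} and Corollary~\ref{cor:forsplit}. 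Note also that your conclusion ``$L\in\X(\p)$'' is inconsistent with the identification $L\cong\Hom_\DA(\p,\nu P_i)$ you then invoke, since Lemma~\ref{lem:iso} places this module in $\Y(\p)$.

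For the ``Moreover'' part, closure of $\Y(\p)$ under extensions is not enough to upgrade ``almost split in $\Y(\p)$'' to ``almost split in $\mod B$'': one must verify the lifting property for maps from \emph{all} indecomposables, not just those in $\Y(\p)$. The paper does this directly: for $Y\in\Y(\p)$ the equivalence $\T(\p)\to\Y(\p)$ handles it, and for $Y\in\X(\p)$ one has $\Hom_B(Y,\Hom_\DA(\p,X_3))=0$ by the torsion pair axiom, so there is nothing to lift. The splitting hypothesis is what guarantees these two cases are exhaustive. The dual criterion (using the left map) is needed for the $\F(\p)\to\X(\p)$ statement, since $\Hom_B(\Y(\p),\X(\p))$ need not vanish.
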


\begin{proof}
The first statement follows from Lemma~\ref{lem:connecting} and Corollary~\ref{cor:forsplit}.

For the second statement, we only prove the statement for $\T(\p)$, since the proof for $\F(\p)$ is similar. Let \[0\rightarrow X_1\xrightarrow{\alpha} X_2\xrightarrow{\beta} X_3\rightarrow 0\] be an almost split sequence in $\T(\p)$. Then by Corollary~\ref{cor:equiv}, we have a short exact sequence in $\Y(\p)$:
\[0\rightarrow\Hom_\DA(\p,X_1)\xrightarrow{\Hom_\DA(\p,\alpha)}\Hom_\DA(\p,X_2)\xrightarrow{\Hom_\DA(\p,\beta)}\Hom_\DA(\p,X_3)\rightarrow0\]
where $\Hom_\DA(\p,X_1)$ and $\Hom_\DA(\p,X_3)$ are indecomposable. Let $Y$ be an indecomposable $B$-module, then $Y\in\X(\p)$ or $Y\in\Y(\p)$. To complete the proof, by e.g. \cite[Theorem IV.1.13]{ass}, it is sufficient to prove the following claim: each homomorphism from $Y$ to $\Hom_\DA(\p,X_3)$ which is not a split epimorphism factors through $\Hom_\DA(\p,\beta)$. If $Y\in\Y(\p)$, then this claim follows from that $\Hom_\DA(\p,-)$ is an equivalence from $\T(\p)$ to $\Y(\p)$. Now we assume that $Y\in\X(\p)$. Then $\Hom_{B}(Y,\Hom_\DA(\p,X_3)) = 0$,
so there is nothing left to prove.
\end{proof}

\begin{proposition}\label{lem:sepa}
Each separating 2-term silting complex $\p$ is a tilting complex.
\end{proposition}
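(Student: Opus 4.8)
The approach I would take is to leverage Theorem~\ref{thm:silt} to convert the separating hypothesis into injectivity of the algebra epimorphism $\Phi_{\p}\colon A\to\overline{A}=\End_{D^b(B)}(\q)$, and then to invoke Corollary~\ref{cor:tilt}.

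First I would set $J=\ker\Phi_{\p}$, a two-sided ideal of $A$. Since $\Phi_{\p}$ is surjective, $\overline{A}\cong A/J$, so the inclusion functor $\Phi_{\ast}\colon\mod\overline{A}\hookrightarrow\mod A$ identifies $\mod\overline{A}$ with the full subcategory of those $A$-modules that are annihilated by $J$. Now Theorem~\ref{thm:silt} gives $\T(\p)=\Phi_{\ast}(\X(\q))$ and $\F(\p)=\Phi_{\ast}(\Y(\q))$; hence every object of $\T(\p)$ and every object of $\F(\p)$ is annihilated by $J$.

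Assume now that $\p$ is separating, i.e.\ the torsion pair $(\T(\p),\F(\p))$ in $\mod A$ is split. Then for an arbitrary $A$-module $M$ the canonical sequence $0\to tM\to M\to M/tM\to 0$ splits, so $M\cong tM\oplus M/tM$ with $tM\in\T(\p)$ and $M/tM\in\F(\p)$. By the previous step both summands, and hence $M$ itself, are annihilated by $J$. Taking $M=A_{A}$ shows that $J$ annihilates the regular module, so $J=0$; thus $\Phi_{\p}$ is an isomorphism, and $\p$ is a tilting complex by Corollary~\ref{cor:tilt}.

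Once Theorem~\ref{thm:silt} is in hand the computation is short; the single point that requires care --- and the only place where the full force of ``separating'' is used, as opposed to merely the existence of the torsion pair $(\T(\p),\F(\p))$ --- is the passage from ``$tM$ and $M/tM$ are annihilated by $J$'' to ``$M$ is annihilated by $J$'', which relies on the canonical sequence being \emph{split} (without splitting one would only obtain $MJ^{2}=0$).
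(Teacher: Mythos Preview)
Your proof is correct and follows essentially the same strategy as the paper's: both reduce to showing $\Phi_{\p}$ is an isomorphism via Theorem~\ref{thm:silt} and then conclude by Corollary~\ref{cor:tilt}. The only cosmetic difference is that the paper phrases the endgame as ``$\Phi_\ast$ is essentially surjective'' (using that each indecomposable $A$-module lies in $\T(\p)$ or $\F(\p)$), whereas you phrase it as ``$J=\ker\Phi_{\p}$ annihilates every $A$-module'' (using that the canonical sequence splits); these are equivalent formulations of the same observation.
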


\begin{proof}
By Corollary~\ref{cor:tilt}, it is sufficient to prove that $\Phi_\p$ is an isomorphism. This is equivalent to prove that the induced functor $\Phi_\ast \colon \mod\End_{D^b(B)}(\q)\hookrightarrow\mod A$ is an equivalence. Since $\Phi_\ast$ is always fully faithful, we only need to prove that $\Phi_{\ast}$ is surjective. Since $\p$ is separating, each indecomposable $A$-module $M$ is either in $\T(\p)$ or in $\F(\p)$. Then by Theorem~\ref{thm:silt}, there is an $N\in\mod\End_{D^b(B)}(\q)$ such that $\Phi_\ast(N) = M$. Thus, we complete the proof.
\end{proof}

%\begin{proof}
%It is sufficient to prove that $H^{-1}(\p)\in\F(\p).$ Let $\Omega$ be the syzygy functor, i.e., for any $Y$, let $P_1\rightarrow P_0\s{p_Y}\rightarrow Y\rightarrow 0$ be its minimal projective presentation, then $\Omega(Y)=\ker p_Y$. Then for any indecomposable $X\in\add H^{-1}(\p)$, either $X[1]\in\add \p$ or $X\in\add\Omega^2(Y)$ for some $Y\in\add H^0(\p)$. For the first case, $X$ is an indecomposable projective $A-$module with $\Hom_A(X,H^0(\p))=0$ by Lemma~\ref{lem:sil}. So $X/tX$ is an indecomposable object in $\CP$ since its projective cover $X$ in $\CP$ is indecomposable. Then $X\cong X/tX\in\F(\p)$ since $(\T(\p),\F(\p))$ is split. For the second case, consider a minimal projective presentation of $Y$:
%\[P_1\rightarrow P_0\rightarrow Y\rightarrow0.\]
%Since $Y$ is Ext-projective in $\T(\p)$, there are no direct summands of $\Omega(Y)$ which are in $\T(\p)$. Then $\Omega(Y)\in\F(\p)$. So the projective cover $P_1$ of $\Omega(Y)$ is in $\F(\p)$ and hence $\Omega^2(Y)$ is also in $\F(\p)$ due to $\F(\p)$ being closed under submodules.
%\end{proof}

%\begin{corollary}\label{cor:separating}
%If $\p$ is separating, then the homomorphism $\Phi_{\p}$ in Theorem~\ref{thm:silt} is an isomorphism.
%\end{corollary}

%\begin{proof}
%This follows from Lemma~\ref{lem:sepa} and Corollary~\ref{cor:tilt}.
%\end{proof}

%
\end{document}